 \newtheorem{open}{Open Problem}






\documentclass[a4paper]{llncs}
\usepackage{amsmath}
\usepackage{amssymb}
\newtheorem{notation}[theorem]{Notation}
\newcommand{\F}{{\mathbb F}}

\newcommand{\var}{{\rm Var}}
\newcommand{\w}{{\rm w_H}}

\newcommand{\rds}{restriction degree stability\ }
\newcommand{\ds}{{\rm deg\_stab}}
\newcommand{\rank}{{\rm rank}_{r-1}}
\newcommand{\Forbidden}{{ Forbidden}}
\newcommand{\Failed}{{ Failed}}
\newcommand{\success}{{ success}}

\allowdisplaybreaks
\newcommand{\SqBinom}[2]{\genfrac{[}{]}{0pt}{}{#1}{#2}}

\usepackage{algorithm}
\usepackage{algpseudocode}
\begin{document}

\title{
On the algebraic degree stability of Boolean functions when restricted to  affine spaces
\thanks{The research of the first author is partly supported by the Norwegian Research Council and  the two
other authors are supported by EPSRC, UK (EPSRC grant EP/W03378X/1)}}
\author{Claude Carlet\inst{1} \and Serge Feukoua\inst{2} \and Ana
S\u{a}l\u{a}gean \inst{3}}
\institute{LAGA, Department of Mathematics, University of
Paris 8 (and Paris 13 and CNRS),
 Saint--Denis cedex 02, France, and University of Bergen, Norway.\
\email{claude.carlet@gmail.com}
\and
Department of Computer Science, University  of
Loughborough, UK; ENSTP
Yaound\'e, Cameroon.
\email{S.C.Feukoua-Jonzo@lboro.ac.uk; sergefeukoua@gmail.com}
\and
Department of Computer Science, University
of Loughborough, UK.\
\email{A.M.Salagean@lboro.ac.uk}}
\title{The stability of the algebraic degree of Boolean functions when restricted to  affine spaces
\thanks{The research of the first author is partly supported by the Norwegian Research Council and  the two
other authors are supported by EPSRC, UK (EPSRC grant EP/W03378X/1)}}


\maketitle \thispagestyle{empty}
\begin{abstract}
We study the
$n$-variable Boolean functions which keep their algebraic degree
unchanged when they are restricted to any (affine) hyperplane, or more generally
to any affine space of a given co-dimension $k$.
For cryptographic applications it is of interest to determine functions $f$ which have a relatively high degree and also maintain this degree when restricted to affine spaces of co-dimension $k$ for $k$ ranging from 1 to as high a value as possible. This highest value will be called  the \rds of $f$, denoted by $\ds(f)$.
We give several necessary and/or sufficient conditions for $f$ to maintain its degree on spaces of co-dimension $k$; we show that this property is related to the property of having ``fast points'' as well as to other properties and parameters. The value of $\ds(f)$ is determined for functions of degrees $r\in
\{1,2,n-2,n-1,n\}$ and for functions which are direct sums of monomials; we also determine the symmetric functions which maintain their degree on any hyperplane.
Furthermore, we give an explicit formula for the number of functions which maintain their degree on all hyperplanes.  Finally, using our previous results and some computer assistance, we determine the behaviour of all the functions in 8 variables, therefore determining the optimal ones (i.e. with highest value of $\ds(f)$) for each degree.
\end{abstract}
\textsc{Keywords}: Boolean functions, affine spaces, algebraic
degree.
 \noindent
\section{Introduction}
 A Boolean function $f$ is  a function from  the set
$\{0,1\}^n$ to  $\{0,1\}$
(see the recent monograph \cite{Carlet} for more details on Boolean functions).
Boolean functions are used in many research areas; of particular relevance to our work is their use in 
 acryptography and sequences (where Boolean functions are omnipresent in the construction of symmetric ciphers - stream ciphers and block ciphers, see \cite{ruepel,Carlet}) as well as in algebraic coding theory
 (with Reed Muller codes and Kerdock codes).
 However, the existence of some  cryptanalysis techniques (such as correlation, fast correlation and algebraic attacks on stream ciphers, linear and differential attacks on block ciphers, and ``guess and determine'' versions of these attacks, see e.g. \cite{Matsui,Courtois,Carlet}), leads to the use  in symmetric cryptography of Boolean functions satisfying some mandatory criteria. 

These  cryptographic functions should in particular have a high
algebraic degree. Indeed, almost all  cryptosystems using  Boolean
functions (as in the filter model or the combiner model  of stream
ciphers) can be attacked if the considered functions have low
algebraic degree. For instance, they can be vulnerable to fast
algebraic attacks (see e.g. \cite{Courtois1,Carlet}). It is also important
that this algebraic degree remains high even if the function is
restricted to an affine hyperplane or to an affine space of
co-dimension $k$ to avoid ``guess and determine attacks '' where the
attacker would make assumptions resulting in the fact that the input
to the function is restricted to a particular affine space. Note
that the algebraic degree of the restriction of a given function $f$
to any affine space is less than or equal to the algebraic degree
$\deg(f)$ of the global function. 

In \cite{Brier-Langevin}, the authors used invariants based on the restrictions of a function to all linear hyperplanes in order to classify functions of degree 3 in 9 variables.
In~\cite{CS}, the authors describe an infinite
class of functions  whose algebraic degree remains unchanged when
they are restricted to any affine hyperplane. However, general characterizations of functions with this property were not given.

In this paper, we start a systematic study of the functions which keep their degree unchanged when restricted to affine spaces of a certain co-dimension $k$. Of course, we can restrict ourselves to homogeneous functions (that is, instead of working on the Reed-Muller code $RM(r,n)$, work on the quotient ring $RM(r,n)/RM(r-1,n)$).
The case of $k=1$ corresponds to hyperplanes, and it is an important first step towards arbitrary co-dimensions. We start in Section~\ref{sec:defs} by defining a ``degree-drop'' space of a function $f$ as being an affine space $A$ such that the restriction $f_{\mid A}$ has degree strictly lower than the degree of $f$. We denote by $K_{k,r,n}$ the set of functions of degree $r$ in $n$ variables which do not have any degree-drop space of co-dimension $k$. We also define the notion of \rds for a function $f$, denoted by $\ds(f)$, as being the largest co-dimension $k$ for which $f$ has no degree-drop space of co-dimension $k$. The largest value of $\ds(f)$ over all functions of degree $r$ in $n\geq r$ variables will be denoted by $\ds(r,n)$. Functions which reach this value would be optimal from the point of view of their degree stability, and therefore of interest in cryptographic constructions.

After proving some basic properties of these notions, we give in Section~\ref{sec:nec-suff} several necessary and/or sufficient conditions for a function to have no degree-drop space of co-dimension $k$. Notably, we prove that a function $f$ has a degree-drop hyperplane if and only if it is affine equivalent to a function $f_1$ of the form $x_1g +h$ where $\deg(h)<\deg(x_1g)$, i.e.\ there is a variable which appears in all the monomials of maximum degree of $f_1$. A generalization of this condition to arbitrary $k$ is also given.  We then find  more constructive sufficient conditions for the non-existence of degree-drop spaces of some co-dimension. Based on these results, we give and efficient algorithm for constructing functions without degree-drop hyperplanes. The class of functions constructed by this algorithm, followed by an invertible affine change of variables, is very large; whether it covers all the functions without degree-drop hyperplanes is an open problem.


In Section~\ref{sec:special-cases} we examine particular classes of functions. We fully settle the cases of functions of degree $r\in\{1,2,n-2,n-1,n\}$. For  functions $f$  which are direct sums of $p$ monomials of degree $r$
 (i.e.\ the monomials have pair-wise disjoint sets of variables), we show that $\ds(f) = p-1$, which also gives a lower bound of $\ds(r,n)\ge \lfloor \frac{n-r}{r}\rfloor$ (see Theorem~\ref{thm:direct-sum}). 
We also show in Theorem~\ref{propsymrest} that symmetric functions have no degree-drop hyperplane if and only if they have even degree $r$ with $2\le r \le n-2$. 

An explicit formula for the number of functions without degree-drop hyperplane of degree $r$ in $n$ variables (i.e.\ the cardinality of $K_{1,r,n}$) is given in Section~\ref{sec:cardofK1rn}, see Theorem~\ref{thm:K1-explicit-formula}. It relies on proving that  a homogeneous function $f$ has no degree-drop hyperplanes if and only if its complement $f^c$ has no ``fast points'', i.e.\ all its discrete derivatives have degree $\deg(f^c)-1$. Counting results for the latter were given
in~\cite{SalMan17}. (Recall that the complement $f^c$ of a homogeneous polynomial $f$ is obtained by replacing each monomial $m$ of $f$ by its complement  $\frac{\prod_{i=1}^n x_i}{m}$);


Section~\ref{sec:connections-inv} examines connections between the notions we defined and existing notions. 
Specifically, $f$ has a degree-drop space $A$ of co-dimension $k$ if and only if the degree of $f1_A$ is strictly lower  than $\deg(f)+k$ (where $1_A$ is the indicator function of $A$); $f$ has a degree-drop hyperplane if and only if $\mathfrak{R}_1(f)>0$, where $\mathfrak{R}_1(f)$ is one of the affine invariants used for the classification in~\cite{lang}.

Finally, in Section~\ref{sec:experiments} we used our previous results alongside computer calculations to determine the number of degree-drop spaces for all the functions in up to 8 variables. To do so, it suffices to examine the representatives of affine equivalence classes computed by~\cite{Hou} and~\cite{lang}. We determined thus all the 8-variable functions which are optimal from the point of view of their \rds. For example, for degrees 3 and 4 in 8 variables, these are functions that maintain their degree when restricted to any subspace of codimension 2.

\section{Preliminaries}\label{2}
We denote by $\mathbb{F}_2$ the finite field with two
elements and by $\mathbb{F}_2^n$ the vector space over
$\mathbb{F}_2$ of all binary vectors of length $n$. The zero element of $\mathbb{F}_2^n$ will be denoted by $\mathbf{0}$. A function $f: \mathbb{F}_2^n \rightarrow \mathbb{F}_2$ will be called an
$n$-variable Boolean function.
There is a bijective correspondence between these functions
and the quotient ring $\mathbb{F}_2[x_1,\dots ,x_n]/(x_1^2+x_1,\dots,
x_n^2+ x_n)$ (see e.g. \cite{Carlet}). The representation in this
quotient ring (which is also an $\F_2$-vector space) is called the
{\em Algebraic Normal Form} (in brief, ANF) of the Boolean function
and has the form $f(x_1,\dots ,x_n)=\sum_{I\subseteq \{1,\dots
,n\}}a_I\prod_{i\in I}x_i$ with $a_I\in \mathbb{F}_2$.
Note that every variable $x_i$ in the ANF appears with an exponent of
at most $1$, because every element of $\mathbb{F}_2$ is its own
square.

 The {\em algebraic degree} of a Boolean function
$f$, denoted by $\deg(f)$, is the degree of its ANF. By convention,
the algebraic degree of the zero function equals $-\infty$. Those functions
of algebraic degree  at most one (respectively 2) are called {\em affine}
(respectively {\em quadratic}).

For every vector $v\in\mathbb{F}_2^n$, the Hamming weight
$\w(v)$ of $v$ is the number of its non zero coordinates ($i.e.$
the cardinality of the support set
of $v$, which is defined as $supp(v) = \{i\in \{1, \ldots, n\}:  v_i\neq 0\}$). The {\em Hamming
weight} $\w(f)$ of a Boolean function $f$ on $\mathbb{F}_2^n$ is the Hamming weight of its truth table, or equivalently
the cardinality of the support of $f$, which is defined as  $supp(f) = \{x\in\mathbb{F}_2^n:f(x)\neq 0\}$. For any set $A$, we shall denote by $|A|$ the cardinality of $A$. The system  $(e_1, \ldots, e_n)$ will be  the canonical basis of $\F_2^n$, made of all the weight 1 vectors.
Let us now give some useful notation:
\begin{notation}For every $n$-variable Boolean function $f$, we denote by $\var(f)$ the set consisting of all the elements $i\in \{1, \ldots, n\}$ such that $x_i$ appears in at least one term with nonzero coefficient in the ANF of $f$.
\end{notation}

Recall that an affine automorphism of $\mathbb{F}_2^n$ is any
mapping $\varphi$
 of the form $\varphi(x) = Mx+a$  where
$x$, $a$ and $\varphi(x)$ are viewed as column vectors, $x^T=(x_1,\ldots, x_n)$,
$a^T=(a_1,\ldots, a_n) \in \F_2^n$ and $M$ is an $n\times n$
invertible matrix over $\F_2$ (when the specific values of $M,a$ are relevant, we will also denote $\varphi$ as $\varphi_{M,a}$ or $\varphi_{M}$ if $a = \mathbf{0}$). Equivalently and more conveniently,  viewing now $x$, $a$ and $\varphi(x)$ as row 
vectors, we can define $\varphi(x) =xM^T+a$ (this notation is more usual in Boolean function theory and in coding theory, and we shall use it in the sequel).
\begin{definition}\label{deffeqcomeq} Two Boolean functions $f, g: \F_2^n \rightarrow \F_2$ are said to be affinely equivalent, which is denoted by $f\sim g$, if there
 exists $\varphi$, an  affine automorphism of $\mathbb{F}_2^n$,
such that $f=g\circ \varphi$ where $\circ$ is the operation of
composition.
\end{definition}
A parameter associated to a Boolean function is called an  affine invariant
if it is preserved by affine equivalence. For instance, the Hamming
weight and the algebraic degree are affine invariant.

The set of all $n$-variable Boolean
functions of algebraic degree at most $r$ shall be
denoted by $RM(r,n)$ (expressing its elements as their truth table binary vector of length $2^n$ provides the Reed-Muller code of order $r$).
Let $RM(r,n)/RM(r-1,n)$ be the quotient space consisting of all
cosets of $RM(r-1,n)$ in $RM(r,n)$. Unless otherwise specified, for each coset we will use as
representative the homogeneous polynomial in the coset, i.e. the
unique polynomial which contains only monomials of degree $r$.
The equivalence $\sim$ can be extended naturally to an equivalence $\sim_{r-1}$ on $RM(r,n)/RM(r-1,n)$, or more generally on any $RM(d,n)/RM(r-1,n)$ with $d\ge r$.
Namely, two  $RM(r-1,n)$-cosets are equivalent under $\sim_{r-1}$ if there is a function $f_1$ in one coset and a function $g_1$ in the other coset such that $f_1\sim g_1$. By abuse of notation we will then write $f \sim_{r-1} g$ for any function $f$ in the $RM(r-1,n)$-coset of $f_1$ and $g$ in the $RM(r-1,n)$-coset of $g_1$. Equivalently, two functions $f$ and $g$ satisfy $f \sim_{r-1} g$ if and only if there is a function $h$ such that $f \sim h$ and $\deg (g-h)\le r-1$.

For any monomial $m$
we shall denote by $m^c$ the complement $\prod_{i \in \{1,\dots n\}\setminus \var(m)} x_i$ (which also equals $\frac{x_1\cdots x_n}m$).
Moreover, if $f = \sum_{i=1}^{p} m_i$, where the $m_i$ are all
monomials of degree $r$, then we define $f^c = \sum_{i=1}^{p}
m_i^c$, also called the complement of $f$ (not to be confused with
$f+1$, which is the complement of $f$ in the sense of the Boolean operation of negation).
We recall the following result that connects the equivalence classes of $RM(r,n)/RM(r-1,n)$ and $RM(n-r,n)/(RM(n-r-1,n))$.\\
The functions
of algebraic degree 2 are fully characterized up to affine
equivalence as follows:
 \begin{lemma}\label{l2}\cite{FJ,Carlet}
Every quadratic non-affine function is affinely equivalent, under $\sim_1$, to:
$$x_1x_2+\dots +x_{2t-1}x_{2t} \textrm { for some $t\leq \frac{n}{2}$}.$$ 
\end{lemma}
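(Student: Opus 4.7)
The plan is to prove this by induction on $n$ via a ``completion of square'' argument. Since $\sim_1$ allows us to discard any degree-$\leq 1$ part, I work with just the homogeneous quadratic form $Q$ of $f$. Because $f$ is non-affine, $Q \neq 0$, so some monomial $x_ix_j$ appears in $Q$; after relabelling variables by an affine automorphism I may assume it is $x_1x_2$. Splitting $Q$ according to whether $x_1, x_2$ appear, I write
$$Q = x_1 x_2 + x_1 L_1 + x_2 L_2 + R,$$
with $L_1, L_2$ linear forms in $x_3, \ldots, x_n$ and $R$ a homogeneous quadratic form in those same variables.

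Next I apply the invertible affine automorphism $y_1 = x_1 + L_2$, $y_2 = x_2 + L_1$, $y_i = x_i$ for $i \geq 3$. Since $y_1 y_2 = x_1 x_2 + x_1 L_1 + x_2 L_2 + L_1 L_2$ and we are in characteristic $2$, a direct substitution yields $Q = y_1 y_2 + L_1 L_2 + R$. The polynomial $L_1 L_2 + R$ involves only $y_3, \ldots, y_n$; its homogeneous quadratic part $Q'$ is obtained by collecting $y_j y_k$ contributions ($j \neq k$), while the $y_j^2$ contributions from $L_1 L_2$ collapse to linear terms via $y_j^2 = y_j$ in the Boolean ring. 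Since such linear and constant remnants are absorbed by $\sim_1$, we get $Q \sim_1 y_1 y_2 + Q'$ with $Q'$ a homogeneous quadratic form in $y_3, \ldots, y_n$. If $Q' = 0$ we are done with $t = 1$; otherwise the induction hypothesis applied to $Q'$ (an $(n-2)$-variable non-affine quadratic) gives $Q' \sim_1 y_3 y_4 + \cdots + y_{2t-1} y_{2t}$ for some $t$ with $2 \leq t \leq n/2$, and extending that change of variables by the identity on $y_1, y_2$ yields the desired $f \sim_1 y_1 y_2 + y_3 y_4 + \cdots + y_{2t-1} y_{2t}$. The base case $n = 2$ is trivial as the only nonzero homogeneous quadratic in two variables is $x_1 x_2$.

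The main conceptual obstacle worth flagging is that over a general field of characteristic $2$ the classical classification of quadratic forms produces two inequivalent families, the hyperbolic forms $\sum x_{2i-1} x_{2i}$ and the elliptic forms, which carry extra square terms of the shape $y_{2t-1}^2 + y_{2t-1} y_{2t} + y_{2t}^2$ that cannot be eliminated by a linear change of variables. In the Boolean setting, however, $y_i^2 = y_i$, so the square terms that would distinguish the two families collapse to linear ones, which $\sim_1$ absorbs. This collapse of the hyperbolic/elliptic dichotomy is exactly why a single canonical form suffices here, and it is the place where the Boolean (rather than purely algebraic) nature of the problem is essential.
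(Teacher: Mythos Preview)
Your proof is correct. The paper does not actually prove this lemma: it is stated with a citation to the literature (\cite{FJ,Carlet}) and used as a known classical fact, so there is no ``paper's own proof'' to compare against. Your completion-of-the-square induction is precisely the standard argument one finds in those references, and your remark about the collapse of the hyperbolic/elliptic dichotomy via $y_i^2 = y_i$ is the right explanation for why a single canonical form suffices in the Boolean setting.
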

\begin{proposition}(\cite[Section 4]{Hou})\label{prop:Hou-complement}
For any invertible matrix $M$ and any Boolean functions $f,g$, we have that $g = f \circ \varphi_{M}+h$ for some $h$ with $\deg(h)<\deg(f)$ if and only if $g^c = f^c \circ \varphi_{(M^T)^{-1}}+h_1$ for some $h_1$ with $\deg(h_1)<\deg(f^c)$. Hence, for any $f,g\in RM(r,n)/RM(r-1,n)$
we have $f\sim_{r-1} g$ if and only if $f^c\sim_{n-r-1} g^c$. Therefore $RM(r,n)/RM(r-1,n)$ and $RM(n-r,n)/(RM(n-r-1,n))$ have the same number of equivalence classes under the equivalence $\sim_{r-1}$ and $\sim_{n-r-1}$, respectively.
\end{proposition}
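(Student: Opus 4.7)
The plan is to understand the linear action of $GL_n(\F_2)$ on the quotient $V_r := RM(r,n)/RM(r-1,n)$ and to compare it with the corresponding action on $V_{n-r}$ via the complement map $c : V_r \to V_{n-r}$ sending $m_I = \prod_{i\in I} x_i$ to $m_{I^c}$. First I would reduce to $f,g$ homogeneous of degree $r$, since only their classes modulo $RM(r-1,n)$ matter; one can also take $a = \mathbf{0}$ in $\varphi_{M,a}$, because $f(\varphi_M(x)+a) - f(\varphi_M(x))$ has degree strictly less than $r$ in $x$ whenever $f$ is homogeneous of degree $r$. Writing $\rho_r(M)(f)$ for the homogeneous degree-$r$ component of $f\circ \varphi_M$ (after reduction modulo $x_j^2 - x_j$), the whole statement reduces to the commutation identity
\[
c\circ \rho_r(M) \;=\; \rho_{n-r}((M^T)^{-1})\circ c.
\]

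The next step is to compute $\rho_r(M)$ on the monomial basis. Starting from $m_I\circ \varphi_M(x) = \prod_{i\in I}(xM^T)_i = \prod_{i\in I}\sum_{j} M_{ij}\, x_j$ and using that only the assignments $i\mapsto j$ that are injective on $I$ survive in the degree-$r$ part after reduction, one obtains
\[
\rho_r(M)(m_I) \;=\; \sum_{|J|=r} \det(M[I,J])\, m_J,
\]
where $M[I,J]$ denotes the $r\times r$ submatrix of $M$ with rows indexed by $I$ and columns by $J$ (in characteristic $2$, permanent and determinant coincide). Thus, in the monomial bases, $\rho_r(M)$ is represented by the $r$-th compound matrix of $M$, and $\rho_{n-r}((M^T)^{-1})$ by the $(n-r)$-th compound matrix of $(M^T)^{-1}$.

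The key algebraic ingredient is Jacobi's complementary-minor identity, which in $\F_2$ (where $\det M = 1$ for invertible $M$ and all sign corrections vanish in characteristic $2$) takes the clean form
\[
\det(M[I,J]) \;=\; \det\bigl((M^T)^{-1}[I^c, J^c]\bigr)
\]
for all $I,J\subseteq\{1,\ldots,n\}$ with $|I|=|J|=r$. Granting this, the commutation is immediate:
\[
c(\rho_r(M)(m_I)) = \sum_{|J|=r}\det(M[I,J])\, m_{J^c} = \sum_{|J|=r}\det\bigl((M^T)^{-1}[I^c,J^c]\bigr)\, m_{J^c} = \rho_{n-r}((M^T)^{-1})(m_{I^c}) = \rho_{n-r}((M^T)^{-1})(c(m_I)).
\]
The biconditional in the proposition is then obtained by applying the bijection $c$ to both sides, and the equality of the number of $\sim_{r-1}$-classes in $V_r$ and $\sim_{n-r-1}$-classes in $V_{n-r}$ follows because $M\mapsto (M^T)^{-1}$ is a bijection of $GL_n(\F_2)$. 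The main obstacle is the Jacobi identity itself: I would either derive it from the characteristic-$0$ Cramer/adjugate formula by reducing mod $2$ and tracking the vanishing of signs, or obtain it intrinsically by identifying $\det(M[I,J])$ with the matrix entry of $\bigwedge^r M$ acting on $\bigwedge^r \F_2^n$ and invoking the canonical non-degenerate pairing $\bigwedge^r \F_2^n \otimes \bigwedge^{n-r}\F_2^n \to \bigwedge^n \F_2^n \cong \F_2$.
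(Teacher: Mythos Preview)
The paper does not actually supply its own proof of this proposition: it is stated as a recalled result and attributed to \cite[Section 4]{Hou}. So there is no in-paper argument to compare against. Your proof is a correct and self-contained derivation of the cited fact. The reduction to the homogeneous, linear case is justified exactly as you say, the identification of $\rho_r(M)$ with the $r$-th compound matrix (via the permanent-equals-determinant observation in characteristic $2$) is right, and the Jacobi complementary-minor identity in the $\F_2$ form $\det(M[I,J]) = \det((M^T)^{-1}[I^c,J^c])$ is precisely what is needed. This matches the spirit of Hou's original argument, which also works via the compound-matrix description of the $GL_n(\F_2)$-action on $RM(r,n)/RM(r-1,n)$, so your approach is essentially the intended one rather than a genuinely different route.
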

Recall that for a function $f$ and $a\in\F_2^n\setminus \{\mathbf{0}\}$, the
discrete derivative of  $f$ in the direction $a$ is defined as
$D_af(x)=f(x+a)+f(x)$. It is known that $\deg(D_a f)\leq \deg(f)-1$ (see~\cite{Lai94}). When the inequality is strict, $a$ is called a ``fast point'':
\begin{definition}\cite{Duan} Let $f:\mathbb{F}_{2}^n
\rightarrow\mathbb{F}_2$ be a non-constant Boolean function in n
variables. Any nonzero vector $a \in \mathbb{F}_{2}^n$ such that $\deg(D_af)< \deg(f) - 1$  is called a {\em fast point} for $f$.
\end{definition}

Recall that any affine hyperplane $H$ of $\F_2^n$ can be defined as the set of solutions of an equation
$\sum_{i=1}^{n} a_ix_i +a_0=0$ with $a_i\in \F_2$ and at least one of $a_1, \ldots, a_n$ being non-zero.
{\em Linear hyperplanes} are the hyperplanes with $a_0=0$.
Every affine space (also called a flat) $A$ of co-dimension $k$ is defined by
a system of $k$ linearly independent linear equations $Mx+b=0$ with $M$ being a full rank  $k\times n$ matrix over $\F_2$ and $b$ a vector in $\F_2^k$. If $b=\mathbf{0}$ then $A$ is a linear space (i.e.\ a vector subspace of $\F_2^n$).


%
Since the study of restrictions of functions to affine spaces will be the main topic of this paper, let us examine in more detail the ANF of such a restriction. The restriction of a function $f$ to an affine space $A$ will be
denoted by $f_{\mid A}$.
We can write $A=v+E$ for some vector space $E$ and some vector $v$. We then fix a basis for $E$ and use the coordinates in this basis to identify each element of $A$. This allows identifying $A$ with the set $\mathbb{F}_2^{n-k}$, where $k$ is the co-dimension of $A$, and function $f_{\mid
A}$ can then be viewed as a function in $n-k$ variables, which allows to
consider its ANF and its algebraic degree. While different choices of bases for $E$
result in different expressions for the ANF of $f_{\mid A}$, all of these functions are affinely equivalent and therefore have the same algebraic degree. \\
More precisely, we can consider the affine space $A$ as the set of solutions of $k$ affine equations, that are, after Gaussian elimination: $x_{i_1} = a_{i_1}(y), \ldots, x_{i_k} = a_{i_k}(y)$, where $i_1, \ldots, i_k$ are distinct and $a_j(y)$ are affine functions in the $n-k$ variables $\{y_1, \ldots, y_{n-k}\} = \{x_1, \ldots, x_n\} \setminus \{x_{i_1}, \ldots, x_{i_k}\}$. We obtain one of the expressions for the ANF of $f_{\mid A}$ by substituting the variables $x_{i_1}, \ldots, x_{i_k}$ with $a_{i_1}(y), \ldots, a_{i_k}(y)$ in the ANF of $f$, and we obtain a function in the remaining $n-k$ variables $\{y_1, \ldots, y_{n-k}\}$ and we consider its algebraic degree. Again, this function depends on the equations used for defining $A$ (the choice of $i_1, \ldots, i_k$ is not unique), but all choices yield functions which are affinely equivalent to each other and therefore have the same degree.
Viewing $f_{\mid A}$ as being obtained by such a substitution, it becomes clear that $\deg(f_{\mid A})\le
\deg(f)$.

\section{Degree-drop spaces: definition and basic properties}\label{sec:defs}
We introduce the following terminology:
\begin{definition}
  Let $f$ be an $n$-variable Boolean function and $A$ an affine subspace of $\mathbb{F}_2^n$.
  If $\deg(f_{\mid A})< \deg(f)$, then we call $A$ a {\em degree-drop subspace} for $f$.
\\
  The largest $k$ such that  $f$ has no degree-drop subspace of co-dimension $k$ will be called the {\em \rds } of $f$, denoted by $\ds(f)$ (with $\ds(f)=0$ if $f$ has degree-drop hyperplanes).
\end{definition}
Note that for a fixed affine space $A$, the property of $A$ to be a degree-drop space for a function $f$ depends only on the monomials
of $f$ of algebraic degree $\deg(f)$. Hence, for any polynomials
$f$ and $g$ in the same coset of $RM(r,n)/RM(r-1,n)$, we have that
$A$ is a degree-drop subspace of $f$ if and only if $A$ is a degree-drop subspace of~$g$.  It suffices therefore to study the degree-drop spaces of homogeneous polynomials.


\begin{notation}
For all integers $k,r,n$ with $1\le k\le n$ and $1\leq r\leq n$, we
denote by $K_{k,r,n}$
 the subset of $RM(r,n)/RM(r-1,n)$ consisting of the nonzero elements $f$ with the property that $\deg(f_{\mid A})= \deg(f)$
 for any affine space $A$ of co-dimension $k$, that is, admitting no degree-drop subspace of co-dimension $k$.
\\
 We denote by $\ds(r,n)$ the largest value of the \rds among all functions of degree $r$ in $n$ variables, i.e.\ the largest $k$ for which $K_{k,r,n}\neq \emptyset$.
%
 \end{notation}

 We first collect a few preliminary observations, which will be used later in the proofs of less elementary results:
\begin{lemma}\label{lem:bits-and-bobs}
Let $f$ be a homogeneous function of degree $r$ in $n$ variables and let $1\le k\le n$.\\
(i) If $H$ is a hyperplane defined by an equation involving a variable (with a nonzero coefficient) which is not in $\var(f)$, then $H$ is not a degree-drop hyperplane of $f$.\\
(ii) The hyperplane $H$ defined by the equation $x_j=0$ is a degree-drop hyperplane for $f$ if and only if $f(x_1, \ldots, x_n) = x_j g(x_1, \ldots, x_{j-1}, x_{j+1}, \ldots, x_n)$ for some homogeneous polynomial $g$ of degree $r-1$.\\
(iii) If $f$ has only one monomial in its ANF, then $f$ has $2^r-1$ degree-drop linear hyperplanes (namely all hyperplanes defined by linear equations which only contain variables that appear in $f$).\\
(iv) If $r\ge 2$ and $f = \sum_{i=1}^{p}m_i$ with $m_i$ monomials (i.e.\ $f$ has $p$ monomials in its ANF),
then $f$ has a degree-drop space of co-dimension $p$ and therefore $\ds(f)\le p-1$.\\
(v) $K_{k+1,r,n} \subseteq K_{k,r,n}$\\
(vi) If $k>n-r$ then any space of co-dimension $k$ is a degree-drop space for $f$ and $K_{k,r,n} = \emptyset$. In particular, if $f$ has degree $n$ then all spaces of any co-dimension $k\ge 1$ are degree-drop spaces for $f$ and $K_{k,n,n} = \emptyset$.\\
(vii) Any function of degree $1$ has degree-drop hyperplanes, i.e.\ $K_{k,1,n} = \emptyset$ for any $k$ and $\ds(1,n)=0$.\\
(viii) $\ds(r,n+1)\le \ds(r,n)+1$.
\end{lemma}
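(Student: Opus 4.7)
The plan is to dispatch the eight assertions one at a time; each follows by a short direct argument from definitions, from the substitution-based description of $f_{\mid A}$ given in the preliminaries, or from the preceding parts of the lemma.

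Assertions (i) and (ii) are immediate from the substitution procedure. In (i), I would solve the equation of $H$ for some variable $x_j\notin\var(f)$ with nonzero coefficient and substitute; since $x_j$ does not appear in $f$, this leaves $f$ unchanged and the degree is preserved. In (ii), setting $x_j=0$ kills exactly the monomials of $f$ containing $x_j$, and since $f$ is homogeneous of degree $r$, the degree strictly drops iff every monomial contains $x_j$, equivalently $f=x_jg$ with $g$ homogeneous of degree $r-1$ not involving $x_j$.

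For (iii), part (i) restricts the candidate degree-drop linear hyperplanes for $f=x_{i_1}\cdots x_{i_r}$ to those of the form $\sum_{j\in S}x_j=0$ with $\emptyset\neq S\subseteq\var(f)$, a total of $2^r-1$ candidates. For each such candidate, solving its equation for some $x_k$ with $k\in S$ and substituting into $f$, the relation $x_j^2=x_j$ collapses every surviving term to (a copy of) the degree-$(r-1)$ monomial obtained by removing $x_k$ from $f$; so each of the $2^r-1$ candidates really is degree-drop. For (iv), I would pick for each $m_i$ an index $j_i\in\var(m_i)$ and zero the variables $x_{j_i}$: the resulting affine space $A$ has co-dimension at most $p$, every $m_i$ vanishes on $A$, so $f_{\mid A}=0$ has degree $<r$, giving $\ds(f)\le p-1$.

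Assertion (v) is immediate: any proper affine subspace $B\subset A$ of co-dimension $k+1$ inside a degree-drop space $A$ of co-dimension $k$ satisfies $\deg(f_{\mid B})\le\deg(f_{\mid A})<r$, and such $B$ exists whenever $k+1\le n$ (otherwise $K_{k+1,r,n}=\emptyset$ and the inclusion is trivial). Assertion (vi) follows because a function on a space of dimension $n-k$ has degree at most $n-k$, which is strictly less than $r$ when $k>n-r$. Assertion (vii) uses that a degree-$1$ function $f=L+a_0$, with $L$ a nonzero linear form, restricted to the hyperplane $\{L=0\}$ becomes the constant $a_0$, of degree at most $0$. Finally, for (viii), assuming $\ds(f)\ge 1$ for $f$ of degree $r$ in $n+1$ variables (else the bound is trivial), every hyperplane $H$ satisfies $\deg(f_{\mid H})=r$, so after identifying $H$ with $\F_2^n$ we have $\ds(f_{\mid H})\le\ds(r,n)$; a degree-drop subspace of $H$ of co-dimension $\ds(r,n)+1$ pulls back to a degree-drop subspace of $\F_2^{n+1}$ for $f$ of co-dimension $\ds(r,n)+2$, giving $\ds(f)\le\ds(r,n)+1$. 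The only subtle point is the affine invariance of \ds needed in (viii), which follows because affine automorphisms send degree-drop spaces to degree-drop spaces of the same co-dimension.
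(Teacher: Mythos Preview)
Your proof is correct and, for parts (i)--(vii), follows essentially the same line as the paper (the paper dismisses (iii) as ``immediate'' while you spell out the substitution explicitly, but the content is identical). For (viii) the paper proceeds slightly differently: rather than restricting an arbitrary $(n{+}1)$-variable function to a hyperplane, it writes $g(x_1,\dots,x_{n+1})=x_{n+1}\,h(x_1,\dots,x_n)+f(x_1,\dots,x_n)$, uses that the $n$-variable function $f$ has a degree-drop space of co-dimension at most $\ds(r,n)+1$, and adjoins the equation $x_{n+1}=0$ to get a degree-drop space for $g$ of co-dimension at most $\ds(r,n)+2$. Your argument via restriction to an arbitrary hyperplane is equivalent in spirit and arguably cleaner, since it sidesteps the edge case $\deg(f)<r$ that the paper's decomposition glosses over.
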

{\em Proof}.\\
(i) We can define $H$ by an equation of the form $x_j = a(x)$ with $j \not \in \var(f)$ and $a(x)$ being an affine function  with $j \not \in \var(a)$;
then, substituting $x_j$ with $a(x)$, as described at the end of Section \ref{2}, has no effect on the ANF of $f$ and therefore on its degree.
\\
(ii) By substituting $x_j=0$ in $f$ we obtain that $f_{|H}$ consists of those monomials of $f$ that do not contain $x_j$. Therefore $\deg(f_{|H})<\deg(f)$ if and only if there is no such monomial in $f$.
\\
(iii) is immediate.
\\
(iv) If we pick one variable $x_{j_i}$ (not necessarily distinct) from each monomial $m_i$, then the space defined by the equations $x_{j_1} =0, \ldots, x_{j_p} =0$ is a degree-drop space for $f$ and has co-dimension at most $p$.
\\
(v)  If $A$ is a degree-drop subspace of
$f$, then any subspace of $A$ is one too.
\\
(vi) If $A$ is a space of co-dimension $k$, then $f_{|A}$ being a function in at most $n-k$ variables, it cannot have algebraic degree larger than $n-k$. Therefore any space of co-dimension $k>n-r$ is a degree-drop space for any function of degree $r$.
\\
(vii) If $r=1$, the equation $f(x)=0$ defines a hyperplane $H$. The restriction of $f$ to this hyperplane is identically 0, so $f$ has $H$ as a degree-drop hyperplane.
\\
(viii) Let $k=\ds(r,n)$,
Any function $g$ of degree $r$ in $n+1$ variables can be written as $g(x_1, \ldots, x_{n+1}) = x_{n+1} h(x_1, \ldots, x_{n}) + f(x_1, \ldots, x_{n})$ with $h$ and $f$ functions in $n$ variables of degrees at most $r-1$ and $r$ respectively. We know $f$ has a degree-drop space of co-dimension $k+1$ or less; the equations that define this space (whose number is at most $k+1$), together with the equation $x_{n+1}=0$, define a degree-drop space for $g$ of co-dimension $k+2$ or less. Hence
$\ds(r, n+1)\le k+1$.
\hfill $\Box$\\

%
%

Next we examine what happens if we view an $n$-variable
function as an $(n+1)$-variable function.

\begin{lemma}\label{belkn-kn+1} Let $f$ be a homogenous function of degree $r$ in $n$ variables. We can view $f$ also as a homogeneous function in $n+1$ variables.
Then $f\in K_{k,r,n}$ if and only if $f\in K_{k,r,n+1}$,
In this sense,
$K_{k,r,n}\subseteq K_{k,r,n+1}$ and therefore $\ds(r, n) \le \ds(r, n+1)$.
\end{lemma}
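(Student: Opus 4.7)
The key observation is that when the $n$-variable $f$ is viewed as an $(n+1)$-variable function, we have $n+1 \notin \var(f)$. So the plan is to exploit the fact that substituting any affine expression for $x_{n+1}$ leaves the ANF of $f$ unchanged, and then match affine spaces in $\mathbb{F}_2^{n+1}$ with affine spaces in $\mathbb{F}_2^n$ accordingly.

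For the forward implication, I would pick an arbitrary affine space $A \subseteq \mathbb{F}_2^{n+1}$ of co-dimension $k$ and split into two cases depending on whether $x_{n+1}$ appears in the defining equations of $A$. If $x_{n+1}$ does not appear, then $A = A' \times \mathbb{F}_2$ where $A' \subseteq \mathbb{F}_2^n$ has co-dimension $k$; since $f$ does not involve $x_{n+1}$, the ANFs of $f_{\mid A}$ and $f_{\mid A'}$ coincide, so $\deg(f_{\mid A}) = \deg(f_{\mid A'}) = r$ by the hypothesis $f \in K_{k,r,n}$. If $x_{n+1}$ does appear, I would apply Gaussian elimination to the $k$ defining equations so as to put exactly one of them in the form $x_{n+1} = a(x_1,\dots,x_n)$; the remaining $k-1$ equations involve only $x_1,\dots,x_n$ and, being linearly independent, cut out an affine space $A' \subseteq \mathbb{F}_2^n$ of co-dimension $k-1$. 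Substituting all $k$ equations into $f$ amounts to using just the $k-1$ equations (the substitution $x_{n+1} = a(x)$ is vacuous on $f$), so again $f_{\mid A}$ and $f_{\mid A'}$ have the same ANF. By Lemma~\ref{lem:bits-and-bobs}(v), $f \in K_{k,r,n}$ implies $f \in K_{k-1,r,n}$, so $\deg(f_{\mid A'}) = r$ and we are done.

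For the backward implication, given $f \in K_{k,r,n+1}$ and any affine space $A' \subseteq \mathbb{F}_2^n$ of co-dimension $k$, I would set $A = A' \times \mathbb{F}_2 \subseteq \mathbb{F}_2^{n+1}$, which has co-dimension $k$ in $\mathbb{F}_2^{n+1}$, and apply the same identification $f_{\mid A} = f_{\mid A'}$ to conclude $\deg(f_{\mid A'}) = r$. The inclusion $K_{k,r,n} \subseteq K_{k,r,n+1}$ is then just the forward direction restated, and the inequality $\ds(r,n) \le \ds(r,n+1)$ follows by taking any $f$ of degree $r$ in $n$ variables with $\ds(f) = \ds(r,n)$: such an $f$ lies in $K_{\ds(r,n),r,n}$ and hence in $K_{\ds(r,n),r,n+1}$, so $\ds(r,n+1) \ge \ds(r,n)$.

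The only step requiring any care is the Gaussian elimination argument in Case~2 of the forward direction, namely making explicit that after isolating $x_{n+1}$ the resulting $k-1$ equations really are linearly independent and that the ANF obtained by substitution is independent of which variables are eliminated (as discussed at the end of Section~\ref{2}). No serious obstacle arises; everything hinges on the trivial but crucial fact that $x_{n+1} \notin \var(f)$.
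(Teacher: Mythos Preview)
Your proposal is correct and follows essentially the same approach as the paper's proof: both directions use the embedding $A' \mapsto A' \times \mathbb{F}_2$ and the observation that substituting for $x_{n+1}$ is vacuous on $f$, together with the inclusion $K_{k,r,n} \subseteq K_{k',r,n}$ for $k' < k$. The only cosmetic difference is that you do the Gaussian elimination on the $x_{n+1}$-column once, landing directly at co-dimension $k-1$, whereas the paper phrases it as an iteration eliminating one equation at a time down to some $k' < k$; the content is identical.
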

{\em Proof}. Assume first that $f\in K_{k,r,n+1}$. Any affine space $A$ of $\F_2^{n}$ of co-dimension $k$ can be embedded in $\F_2^{n+1}$ as the affine space $A' = \{(x_1, \ldots, x_n,x_{n+1}): (x_1, \ldots, x_n)\in A\}$ of the same co-dimension as $A$. Since $f_{\mid A} = f_{\mid A'}$, $A$ cannot be a degree-drop space of $f$, so $f\in K_{k,r,n}$.
\\
Next assume $f\in K_{k,r,n}$ and let us look at $f$ as an
$(n+1)-$variable function. Let $A$ be an  affine space of co-dimension
$k$ in $\mathbb{F}_2^{n+1}$. If none of the equations that define $A$ contains $x_{n+1}$, then the restriction of $f$ to $A$ can be viewed in $\mathbb{F}_2^n$, and
since  $f\in K_{k,r,n}$, we have $\deg(f_{\mid A}) = \deg(f)$.
If some equation defining $A$ contains $x_{n+1}$, then we can rewrite it as
$x_{n+1} = a_{n+1}(y)$ where $y=(x_1,\dots ,x_n)\in \mathbb F_2^n$ and substituting $x_{n+1} $ with $ a_{n+1}(y)$ in $f$ does not change $f$, as $f$ does not contain $x_{n+1}$. We can then get rid of this equation in the definition of $A$, whose co-dimension becomes $k-1$. Iterating the process we are led to the case where none of the equations that define $A$ contains $x_{n+1}$ and $A$ has co-dimension $k'<k$.  Since $f\in K_{k,r,n}\subseteq K_{k',r,n}$, we have $\deg(f_{\mid A})=\deg(f)$. Hence, in all cases, we have $\deg(f_{\mid A})=\deg(f)$, which implies  $f\in K_{k,r,n+1}$. \\The rest of the proof is straightforward.
%
%
\hfill $\Box$\\

Allowing a degree-drop subspace is clearly an affine invariant property:
\begin{lemma}\label{lem:affine-invariance}
Let $f,g\in RM(r,n)/RM(r-1,n)$ be such that $f\sim_{r-1} g$, i.e.\
 $g = f \circ  \varphi + h$ for some affine automorphism $\varphi$ of $\F_2^n$
and some function $h$ with $\deg(h)<r$. Let $A$ be an affine space of $\F_2^n$ of co-dimension $k$. Then $g_{\mid_A}\sim_{r-1} f_{\mid_{\varphi(A)}}$.
Therefore, 
$A$ 
is a degree-drop space for $g$ if and only if $\varphi(A)$ is a degree-drop space for $f$. Consequently, $f\in K_{k,r,n}$  if and only if $g\in K_{k,r,n}$.
\end{lemma}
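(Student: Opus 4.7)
The plan is to reduce the statement about degree-drop spaces to a statement about affine equivalence of the two restrictions $g_{\mid A}$ and $f_{\mid \varphi(A)}$, viewed as functions in $n-k$ variables, and then use the already-established fact that $\sim_{r-1}$ preserves the property ``degree equals $r$''.

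First I would decompose $g_{\mid A} = (f\circ\varphi)_{\mid A} + h_{\mid A}$ using $g = f\circ\varphi+h$. Since $\deg(h_{\mid A})\le\deg(h)<r$, this piece is harmless and will contribute the ``$+h'$ of degree $<r$'' term required by the definition of $\sim_{r-1}$. The main content is therefore to show $(f\circ\varphi)_{\mid A}\sim f_{\mid\varphi(A)}$ as functions in $n-k$ variables. Because $\varphi$ is an affine automorphism of $\F_2^n$, it maps $A$ bijectively onto $\varphi(A)$, which again has co-dimension $k$. For every $x\in A$, setting $y=\varphi(x)\in\varphi(A)$, we have $(f\circ\varphi)_{\mid A}(x)=f_{\mid\varphi(A)}(y)$, so $(f\circ\varphi)_{\mid A} = f_{\mid\varphi(A)}\circ(\varphi\!\mid_A)$, where $\varphi\!\mid_A\colon A\to\varphi(A)$ is the induced affine bijection.

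The step I expect to be the main technical nuisance is making the identifications with $\F_2^{n-k}$ precise: once we fix bases that identify $A$ with $\F_2^{n-k}$ and $\varphi(A)$ with $\F_2^{n-k}$ (as explained at the end of Section~\ref{2}), the map $\varphi\!\mid_A$ becomes an affine automorphism $\psi$ of $\F_2^{n-k}$, so $(f\circ\varphi)_{\mid A} = f_{\mid\varphi(A)}\circ\psi$, which is exactly the definition of $\sim$. Combined with the previous paragraph this yields $g_{\mid A}\sim_{r-1} f_{\mid\varphi(A)}$.

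Finally I would derive the degree-drop consequence. Since $\deg(g)=\deg(f)=r$ and $\sim_{r-1}$-equivalent functions have the same degree-$r$ homogeneous component (up to the equivalence $\sim$ in one fewer variable), we have $\deg(g_{\mid A})<r$ if and only if $\deg(f_{\mid\varphi(A)})<r$: indeed if one side has degree exactly $r$, its top-degree part is affinely equivalent (hence nonzero together with) the top-degree part of the other side. Thus $A$ is a degree-drop space for $g$ iff $\varphi(A)$ is a degree-drop space for $f$. Since $\varphi$ is a bijection on $\F_2^n$ preserving co-dimension of affine subspaces, it induces a bijection between the sets of co-dimension-$k$ degree-drop spaces of $g$ and of $f$; in particular one set is empty iff the other is, giving $f\in K_{k,r,n}\iff g\in K_{k,r,n}$.
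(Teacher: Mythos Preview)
Your proof is correct and follows essentially the same approach as the paper: both show that $(f\circ\varphi)_{\mid A}$ and $f_{\mid\varphi(A)}$ differ by the affine bijection $\varphi\!\mid_A\colon A\to\varphi(A)$, and that the remaining term has degree $<r$. The only cosmetic difference is that the paper first rewrites $f\circ\varphi+h=(f+h\circ\varphi^{-1})\circ\varphi$ and then restricts, whereas you split off $h_{\mid A}$ immediately; your version is arguably cleaner since it avoids the extra remark that $(h\circ\varphi^{-1})_{\mid\varphi(A)}$ and $h_{\mid A}$ have the same degree.
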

{\em Proof}.  The set $\varphi(A)$ is an affine space of co-dimension
$k$. We have that $g_{\mid_A}=(f\circ \varphi + h)_{\mid_A}=((f + h\circ \varphi^{-1})\circ \varphi)_{\mid_A}$ is affine equivalent to $(f + h\circ \varphi^{-1})_{\mid_{\varphi(A)}}=f_{\mid_{\varphi(A)}} + (h\circ \varphi^{-1})_{\mid_{\varphi(A)}}$, and $(h\circ \varphi^{-1})_{\mid_{\varphi(A)}}$ has the same algebraic degree as $h_{\mid A}$. Therefore $g_{\mid_A}\sim_{r-1} f_{\mid_{\varphi(A)}}$.

If $A$ is a degree-drop space for $g$ then $\varphi(A)$ is a degree-drop space for $f$. The converse is obvious
by observing that $f= g\circ \varphi^{-1} + h\circ \varphi^{-1}$.  
\hfill $\Box$\\

The next result shows that in order to decide whether a function has degree-drop affine spaces it suffices to consider linear spaces.
\begin{lemma}\label{lem:affine-vs-linear-dd-space} Let $f$ be an $n$-variable Boolean function and
$A=v+E$ be an affine space in  $\mathbb{F}_2^n$, where $E$ is a vector subspace and $v$ a vector. Then
$deg(f_{\mid A})=deg(f)$ if and only if $deg(f_{\mid
E})=deg(f)$. In other words, an affine space is a degree-drop space for $f$ if and only if its underlying vector space is a degree-drop space for $f$.\end{lemma}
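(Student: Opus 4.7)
The plan is to exploit the fact that $A$ and $E$ differ only by a translation, and translations preserve the top-degree homogeneous part of a polynomial. Setting $r=\deg(f)$, it will suffice to show that $f_{\mid A}$ and $f_{\mid E}$, viewed as polynomials in the parametrizing variables $y_1,\dots,y_{n-k}$ relative to a chosen basis $(e_1,\dots,e_{n-k})$ of $E$, have identical homogeneous components of degree $r$. Granted this, the conclusion is immediate: both restrictions will have algebraic degree $r$ exactly when this common top component is nonzero, and both will have degree strictly less than $r$ otherwise.

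For the main step, I would parametrize $E$ by $y\mapsto\sum_i y_ie_i$ and $A$ by $y\mapsto v+\sum_i y_ie_i$. For a single monomial $m=\prod_{j\in J}x_j$ appearing in the ANF of $f$, the substitution $x_j\mapsto v_j+L_j(y)$, with the linear form $L_j(y)=\sum_i(e_i)_j y_i$, gives
\[
m\bigl(v+\textstyle\sum_i y_ie_i\bigr)=\prod_{j\in J}\bigl(v_j+L_j(y)\bigr)=\sum_{J'\subseteq J}\Bigl(\prod_{j\in J'}v_j\Bigr)\prod_{j\in J\setminus J'}L_j(y).
\]
The only term of maximal degree $|J|$ in $y$ is $\prod_{j\in J}L_j(y)$, arising from $J'=\emptyset$, and it does not involve $v$. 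Summing this analysis over all monomials of $f$ and isolating the degree-$r$ homogeneous component (to which only the monomials of $f$ of degree exactly $r$ can contribute), the degree-$r$ parts of $f_{\mid A}$ and $f_{\mid E}$ agree.

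An equally short alternative would be to invoke Lemma~\ref{lem:affine-invariance} with the translation $\varphi(x)=x+v$: since $g:=f\circ\varphi$ satisfies $g=f+h$ with $\deg(h)<r$ (translations preserve the top homogeneous component), the lemma yields $g_{\mid E}\sim_{r-1}f_{\mid\varphi(E)}=f_{\mid A}$, while the identity $g_{\mid E}=f_{\mid E}+h_{\mid E}$ together with $\deg(h_{\mid E})<r$ gives $\deg(g_{\mid E})=r\Leftrightarrow\deg(f_{\mid E})=r$. The main (and really the only) subtlety is bookkeeping: ensuring that $A$ and $E$ are consistently identified with $\mathbb{F}_2^{n-k}$ and that all degrees are assessed after that identification, but as was noted earlier in the section, different choices of basis produce affinely equivalent restrictions and hence do not affect the algebraic degree.
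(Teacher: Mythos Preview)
Your proposal is correct. Your alternative approach via Lemma~\ref{lem:affine-invariance} is essentially the paper's proof: the paper writes the single line $f_{\mid A}\circ\varphi_{I,v}=f_{\mid E}+(D_vf)_{\mid E}$ with $\deg((D_vf)_{\mid E})<r$, which is exactly your observation that $g=f\circ\varphi$ satisfies $g=f+h$ with $h=D_vf$ of degree below $r$, so that $g_{\mid E}$ and $f_{\mid E}$ share their degree-$r$ part while $g_{\mid E}$ is affinely equivalent to $f_{\mid A}$.

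Your first approach, the direct monomial-by-monomial expansion showing that the degree-$r$ homogeneous part of the restriction is independent of the translate $v$, is also valid and slightly more elementary in that it avoids invoking Lemma~\ref{lem:affine-invariance}; but it amounts to re-deriving, at the level of individual monomials, the fact that $\deg(D_vf)<\deg(f)$. The paper's one-line use of the discrete derivative packages this more efficiently.
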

{\em Proof}
Let $r=\deg(f)$, $A$ an affine space and $\varphi_{I,v}$ the translation by $v$. We have $f_{\mid A}\circ \varphi_{I,v}=f_{\mid E}+(D_vf)_{\mid E}$ with $deg((D_vf)_{\mid E})<r$ and $\deg(f_{\mid A}\circ \varphi_{I,v})=\deg(f_{\mid A})$. \hfill $\Box$


\section{Necessary and/or sufficient conditions for a function not to admit a degree-drop space}\label{sec:nec-suff}
\subsection{Necessary conditions}
The following necessary condition for a function to belong to
$K_{k,r,n}$ was proved in \cite{CS} (and it is easy to check):
\begin{lemma}(\cite{CS})\label{lemma:c1}
 Let $k,r,n$ satisfy $1\le k\le n$ and $1\leq r\leq n$ and let $f=\sum_{i=1}^{p}m_i$ where $m_i$ are monomials of degree $r$. If $f\in K_{k,r,n}$ then
\begin{equation}\label{c1}
\cap_{i=1}^{p}\var(m_i)= \emptyset.
\end{equation}
In particular, $f\in K_{k,r,n}$ implies $p>1$.
\end{lemma}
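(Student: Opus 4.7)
The statement has two parts: the non-emptiness of a common variable forces a degree-drop hyperplane, and a single monomial cannot lie in $K_{k,r,n}$. Both follow quickly from results already established in Lemma~\ref{lem:bits-and-bobs}.

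The plan is to argue by contraposition for the main claim. Suppose $\cap_{i=1}^p \var(m_i) \neq \emptyset$ and pick some index $j$ in this intersection. Then every monomial $m_i$ contains $x_j$ as a factor, so we can write $f = x_j g$ where $g$ is a homogeneous polynomial of degree $r-1$ in the variables other than $x_j$ (namely $g = \sum_{i=1}^p m_i / x_j$). By Lemma~\ref{lem:bits-and-bobs}(ii), the hyperplane $H$ defined by $x_j = 0$ is then a degree-drop hyperplane for $f$, so $f \notin K_{1,r,n}$. Applying Lemma~\ref{lem:bits-and-bobs}(v) iteratively gives $K_{k,r,n} \subseteq K_{1,r,n}$, hence $f \notin K_{k,r,n}$, contradicting our hypothesis.

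For the ``in particular'' part, assume $p = 1$, i.e.\ $f = m_1$ consists of a single monomial of degree $r \geq 1$. Then $\cap_{i=1}^p \var(m_i) = \var(m_1)$ is non-empty (it contains exactly the $r \geq 1$ indices of variables appearing in $m_1$), which by what we just proved forces $f \notin K_{k,r,n}$. Alternatively, this is just Lemma~\ref{lem:bits-and-bobs}(iii), which already guarantees $2^r - 1 \geq 1$ degree-drop linear hyperplanes when $f$ is a single monomial.

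There is no real obstacle here: the argument is essentially one line once Lemma~\ref{lem:bits-and-bobs} is in hand. The only point to be slightly careful about is making sure the decomposition $f = x_j g$ produces a genuinely homogeneous $g$ of degree $r-1$, which is automatic from the homogeneity of $f$ and the fact that $x_j$ appears in every monomial with multiplicity exactly one (since we work modulo $x_j^2 + x_j$).
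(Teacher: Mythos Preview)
Your proof is correct and matches the natural approach the paper has in mind; the paper itself does not write out a proof here (it cites \cite{CS} and says the claim ``is easy to check''), but the proof of the immediately following generalisation, Lemma~\ref{cond2}, proceeds in exactly the same way --- exhibit the coordinate hyperplane $x_j=0$ as a degree-drop space and conclude. The only cosmetic difference is that you route the conclusion through Lemma~\ref{lem:bits-and-bobs}(ii) and~(v), whereas one could also argue directly that substituting $x_j=0$ annihilates every monomial of $f$.
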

Lemma~\ref{lemma:c1} means that for any variable, there is at least one monomial in $f$ which does not contain it. This statement can be generalized to spaces of higher co-dimension $k$ by replacing   ``any variable" with ``any set of $k$ variables" and ``does not contain it" to ``does not contain any of these variables":
\begin{lemma}\label{cond2} Let $f$ be a homogeneous
$n$-variable Boolean function of algebraic degree $r\geq 2$ and write $f=\sum_{i=1}^pm_i$ with
$m_i$ monomials. Let $1\le k \le n$.
\\
If $f\in K_{k,r,n}$, then for any set of $k$ distinct variables
$x_{j_1},...,x_{j_k}$ there is at least one monomial in $f$ which does not contain any of the variables $x_{j_1},...,x_{j_k}$ (in other words there is an $i$ such that
$\{j_1, \ldots, j_k\}\cap \var(m_i) = \emptyset$).

\end{lemma}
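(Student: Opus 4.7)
The plan is to prove the contrapositive: assume that there exist $k$ distinct variables $x_{j_1},\ldots,x_{j_k}$ such that every monomial $m_i$ contains at least one of them, and exhibit a degree-drop space of co-dimension $k$ for $f$. This would contradict $f\in K_{k,r,n}$.

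The obvious candidate for this degree-drop space is the linear subspace
\[
A=\{x\in\mathbb{F}_2^n : x_{j_1}=0,\ x_{j_2}=0,\ \ldots,\ x_{j_k}=0\}.
\]
Since the $k$ equations involve distinct coordinates they are linearly independent, so $A$ has co-dimension exactly $k$. To compute $f_{\mid A}$, I substitute $x_{j_1}=\cdots=x_{j_{k}}=0$ into the ANF of $f$, as described at the end of Section~\ref{2}. Under the assumption, every monomial $m_i$ contains some $x_{j_t}$, and therefore becomes $0$ after the substitution. Consequently $f_{\mid A}=0$, which has degree $-\infty<r=\deg(f)$. Thus $A$ is a degree-drop space of co-dimension $k$, contradicting the hypothesis $f\in K_{k,r,n}$.

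The argument is essentially the generalization of Lemma~\ref{lem:bits-and-bobs}(ii) from one variable to $k$ variables, and the main (minor) point to check is simply that killing any of the $k$ coordinates suffices to annihilate a monomial containing it. No further machinery is needed; the case $k=1$ recovers Lemma~\ref{lemma:c1}. I do not anticipate a real obstacle here — the only thing to state clearly is that restricting to $A$ corresponds to the substitution $x_{j_t}\mapsto 0$ and hence annihilates any monomial meeting $\{j_1,\ldots,j_k\}$.
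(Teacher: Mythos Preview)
Your proof is correct and follows essentially the same approach as the paper: both argue by contradiction (contrapositive), take the linear space $A$ defined by $x_{j_1}=\cdots=x_{j_k}=0$, and observe that the substitution kills every monomial of $f$, so $\deg(f_{\mid A})<\deg(f)$. Your write-up is slightly more explicit (noting $f_{\mid A}=0$ and that the $k$ equations are independent), but the argument is the same.
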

{\em Proof}. Assume for a contradiction that  there is a set of  $k$ distinct variables
$x_{j_1},...,x_{j_k}$ such that each of the monomials of $f$ contains at least one
of the variables $x_{j_1},...,x_{j_k}$. Consider the affine space $A$  of
co-dimension $k$  characterized by the system of equations $x_{j_t}=0$ for all
$t=1,...,k$.
 It is easy to check that $\deg(f_{\mid_A})<\deg(f)$ contradicting the fact that $f\in K_{k,r,n}$.  \hfill $\Box$\\

The conditions in Lemmas~\ref{lemma:c1} and~\ref{cond2}  above are necessary, but not sufficient for a function to have no degree-drop spaces of a given co-dimension $k$. For example consider the function $f(x_1, \ldots, x_5) = x_1x_3+x_1x_4+x_2x_3+x_2x_4$, which satisfies condition~(\ref{c1}), yet the hyperplane defined by the equation $x_1+x_2=0$ is a degree-drop hyperplane for $f$.

\subsection{Theoretical necessary and sufficient conditions}
The two theorems below show that each of the two conditions of Lemmas~\ref{lemma:c1} and~\ref{cond2} become sufficient  when we extend them by affine equivalence.

\begin{theorem}\label{theo2} Let $f$
be a  Boolean function of algebraic degree
$r\geq 2$ in $n$ variables and let $a=(a_1,\ldots,a_n)\in \F_2^n\setminus\{ \mathbf{0}\}$ and $a_0\in \F_2$. The following two statements are equivalent:\\
(i) $f$ has a degree-drop hyperplane $H$ defined by the equation $a_0 + \sum_{i=1}^{n}a_ix_i=0$.\\
(ii) $f$ can be written as $f(x)= \left( a_0 + \sum_{i=1}^{n}a_ix_i\right)g(x)+c(x)$
for some polynomial $g(x)$ of
algebraic degree $r-1$ and  some polynomial $c(x)$ of algebraic degree at most $r-1$.
\\
Consequently, $f$ has a degree-drop hyperplane (i.e.\ $f \not \in K_{1,r,n}$)
if and only if $f(x) \sim_{r-1} x_1 g(x_2, \ldots, x_{n})$ for some
homogeneous polynomial $g$ of degree $r-1$ in $n-1$ variables.
\end{theorem}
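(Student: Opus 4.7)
The plan is to establish (i) $\Leftrightarrow$ (ii) by direct algebraic manipulation of the ANF, and then to deduce the closing statement by an affine change of variables combined with earlier lemmas.

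The direction (ii) $\Rightarrow$ (i) is just a substitution: on $H$ the affine form $L := a_0 + \sum_i a_i x_i$ vanishes, so $f_{\mid H} = c_{\mid H}$, which has degree at most $r - 1 < r$, making $H$ a degree-drop hyperplane.

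For the substantive direction (i) $\Rightarrow$ (ii), I would relabel variables so that $a_1 = 1$, giving $L = x_1 + L'$ with $L' := a_0 + \sum_{i \geq 2} a_i x_i$. Since each variable has exponent at most one in an ANF, $f$ decomposes uniquely as $f = x_1 h(x_2,\ldots,x_n) + k(x_2,\ldots,x_n)$. Substituting $x_1 = L'$ gives $f_{\mid H} = L' h + k$, and the hypothesis $\deg f_{\mid H} < r$ forces the degree-$r$ homogeneous component of this to vanish. Writing $L'_1$ for the linear part of $L'$ and $h_{r-1}, k_r$ for the top homogeneous parts of $h$ and $k$, this cancellation reads $k_r = L'_1 \, h_{r-1}$. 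Setting $g := h$ and $c := f - L g = k - L' h$, the same identity yields $\deg c \leq r-1$. That $\deg g = r-1$ follows because, if $h_{r-1}$ were zero, then $k_r$ would vanish too, so the degree-$r$ part $x_1 h_{r-1} + k_r$ of $f$ would be zero, contradicting $\deg f = r$.

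For the concluding equivalence, $(\Leftarrow)$ follows immediately from Lemma~\ref{lem:bits-and-bobs}(ii) applied to $x_1 g(x_2, \ldots, x_n)$ combined with the affine invariance in Lemma~\ref{lem:affine-invariance}. For $(\Rightarrow)$, I would use (ii) and pick an invertible affine automorphism $\varphi$ with $L \circ \varphi = y_1$, which exists because $L$ is a non-constant affine form. Then $f \circ \varphi = y_1 (g \circ \varphi) + c \circ \varphi$; writing $g \circ \varphi = y_1 A + B$ with $A, B$ independent of $y_1$ and using $y_1^2 = y_1$ collapses this to $y_1 (A+B)$, where $A+B$ depends only on $y_2, \ldots, y_n$. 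Finally I would split $A+B$ into its homogeneous degree-$(r-1)$ part and a remainder of lower degree; the latter, multiplied by $y_1$, has degree $< r$ and is absorbed into the $\sim_{r-1}$ equivalence, producing the required form. The main obstacle throughout is the degree bookkeeping in (i) $\Rightarrow$ (ii), specifically the cancellation that forces $\deg c < r$; all the remaining steps are routine substitutions or applications of prior lemmas.
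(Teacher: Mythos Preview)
Your proof is correct and follows essentially the same route as the paper's: both split $f$ as $x_1 h + k$ after normalising $a_1=1$, substitute to compute $f_{\mid H}$, read off that $c := f - Lh$ has degree $<r$ (indeed over $\F_2$ one has $c = k - L'h = k + L'h = f_{\mid H}$, which is exactly how the paper argues), and then pass to the $\sim_{r-1}$ statement via an affine $\varphi$ with $L\circ\varphi = y_1$. Your handling of the final step is in fact slightly more careful than the paper's one-line version: you explicitly deal with the collapse $y_1(y_1A+B)=y_1(A+B)$ and the extraction of the homogeneous top part of $A+B$, points the paper leaves implicit.
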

{\em Proof}. We first prove that (i) is equivalent to (ii).\\
Assume that (ii) holds.
We have $f_{\mid {H}}=c_{\mid {H}}(x)$ meaning that $H$
is a degree-drop hyperplane for $f$.
\\
Conversely, assume (i) holds. Since $a_i\neq 0$ for at least one
value of $i\in \{1, \ldots, n\}$, we can assume without loss of generality that $a_1=1$. From Lemma~\ref{lem:bits-and-bobs} (i) we know that $x_1$ appears in the monomials of $f$ of degree $r$ (otherwise $H$ would not be a degree-drop hyperplane).
We can write $f$ as:
\[ f(x)=x_1g_1(x_2, \ldots,x_n) + g_2(x_2, \ldots,x_n),\]
with $g_1, g_2$
polynomials in $n-1$ variables,
$\deg(g_1)=r-1$, $\deg(g_2)\le r$. The hyperplane $H$ is defined by the
equation $x_1+b(x_2, \ldots,x_n)+a_0=0$, where
$b(x_2,\ldots,x_n) = \sum_{i=2}^{n}a_ix_i$. The restriction of $f$
to $H$ can be obtained by substituting $x_1$ with $b(x_2,
\ldots,x_n)+a_0$, obtaining:
\[ f_{\mid {H}}(x) = b(x_2, \ldots,x_n)g_1(x_2, \ldots,x_n) +a_0g_1(x_2, \ldots,x_n)+ g_2(x_2, \ldots,x_n) .\]
Since $H$ is a degree-drop hyperplane
we have that
\[  g_2(x_2, \ldots,x_n)=b(x_2, \ldots,x_n)g_1(x_2, \ldots,x_n)+a_0g_1(x_2, \ldots,x_n)+c(x_2, \ldots,x_n)\]
for some polynomial $c(x)$ with $deg(c)<r$ and therefore:
\begin{eqnarray*}
  f(x)&=&x_1g_1(x_2, \ldots,x_n)+ (b(x_2, \ldots,x_n)+a_0)g_1(x_2, \ldots,x_n)+c(x_2, \ldots,x_n) \\
   &=& \left( a_0 + \sum_{i=1}^{n}a_ix_i\right)g_1(x) +c(x)
\end{eqnarray*}
which means that (ii) holds.
\\
Finally, using the equivalence of (i) and (ii) and an invertible affine transformation
$\varphi$ of $\F_2^n$ such that $(\ell\circ \varphi)(x_1, \ldots,
x_n) = x_1$ where $\ell(x) =\left( a_0 + \sum_{i=1}^{n}a_ix_i\right)$ we have that $f \not \in K_{1,r,n}$ if and only if $f(x) \sim_{r-1} x_1 g(x_2, \ldots, x_{n})$ as required.
\hfill $\Box$

\begin{proposition}\label{vshyp}
For each Boolean function $f$, the set
\[
\{\mathbf{0}\}\cup\{a\in \F_2^n\setminus \{\mathbf{0}\} : \{x\in \F_2^n : a^Tx=0\} \mbox{  is a degree-drop
hyperplane of }f\}
\]
is a vector space over $\mathbb F_2$.
\end{proposition}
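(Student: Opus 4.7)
The plan is to verify closure of $S := \{\mathbf{0}\}\cup\{a\in\F_2^n\setminus\{\mathbf{0}\}: H_a \text{ is degree-drop for } f\}$ (writing $H_a=\{x:a^Tx=0\}$) under the two vector space operations. Closure under scalar multiplication over $\F_2$ is immediate, so the content is additive closure. Given $a,b\in S$, the cases $a=\mathbf{0}$, $b=\mathbf{0}$, and $a=b$ (where $a+b=\mathbf{0}$) are trivial, so I assume $a$ and $b$ are distinct nonzero vectors; over $\F_2$ this forces $a,b$ linearly independent and $a+b\ne\mathbf{0}$. The goal is then to prove $H_{a+b}$ is a degree-drop hyperplane of $f$.

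The approach is a linear change of coordinates adapted to $a$ and $b$. Since $a,b$ are linearly independent, the map $x\mapsto(a^Tx,b^Tx)$ is onto $\F_2^2$, so there exist $p,q\in\F_2^n$ with $(a^Tp,b^Tp)=(1,0)$ and $(a^Tq,b^Tq)=(0,1)$. Completing $\{p,q\}$ to a basis by vectors $u_1,\ldots,u_{n-2}$ spanning $V=H_a\cap H_b$, and writing $x=\sum_{i=1}^{n-2}t_i u_i+\alpha p+\beta q$, one has $a^Tx=\alpha$ and $b^Tx=\beta$, so $H_a$, $H_b$, and $H_{a+b}$ correspond to $\alpha=0$, $\beta=0$, and $\alpha=\beta$ respectively.

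Let $\tilde f(t,\alpha,\beta)=\sum c_{I,i,j}\,t^I\alpha^i\beta^j$ be the ANF of $f$ in these new coordinates, with $I\subseteq\{1,\ldots,n-2\}$ and $i,j\in\{0,1\}$. Since algebraic degree is affine invariant, $\deg\tilde f=r:=\deg f$, so every nonzero coefficient satisfies $|I|+i+j\le r$. The hypothesis on $H_a$ translates (by Lemma~\ref{lem:bits-and-bobs}(ii) applied in the new coordinates, or directly by substitution) to $c_{I,0,j}=0$ whenever $|I|+j=r$; symmetrically, the $H_b$ hypothesis gives $c_{I,i,0}=0$ whenever $|I|+i=r$. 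Substituting $\beta=\alpha$ and using $\alpha^2=\alpha$ produces
\[
\tilde f|_{H_{a+b}}(t,\alpha)=\sum_I c_{I,0,0}\,t^I\;+\;\alpha\sum_I\bigl(c_{I,1,0}+c_{I,0,1}+c_{I,1,1}\bigr)t^I,
\]
whose degree is less than $r$ precisely when $c_{I,0,0}=0$ for $|I|=r$ (already granted by the $H_a$ condition) and $c_{I,1,0}+c_{I,0,1}+c_{I,1,1}=0$ for $|I|=r-1$.

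The one delicate point is the ``mixed'' coefficient $c_{I,1,1}$ with $|I|=r-1$: the two hypotheses kill $c_{I,1,0}$ and $c_{I,0,1}$ but say nothing about $c_{I,1,1}$. However, $c_{I,1,1}\ne 0$ would demand $|I|+1+1\le r$, i.e.\ $|I|\le r-2$, contradicting $|I|=r-1$; hence $c_{I,1,1}=0$ automatically from the degree bound on $\tilde f$. The required sum therefore vanishes, $H_{a+b}$ is degree-drop, and $a+b\in S$. The main insight driving the proof is simply that the monomial $\alpha\beta$ consumes two units of total degree, leaving no room for it to contribute at $t$-degree $r-1$; beyond this observation the argument is bookkeeping in the coordinates adapted to $H_a\cap H_b$.
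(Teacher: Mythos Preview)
Your proof is correct and takes essentially the same approach as the paper: change coordinates so that $H_a$ and $H_b$ become coordinate hyperplanes, observe that every top-degree monomial must then contain both distinguished variables, and conclude that restricting to $H_{a+b}$ drops the degree. The paper is more concise---it invokes Lemma~\ref{lem:affine-invariance} to assume $a=e_1$, $b=e_2$ and Lemma~\ref{lem:bits-and-bobs}(ii) to write the degree-$r$ part as $x_1x_2\,g$---but your explicit ANF computation and the ``$c_{I,1,1}$ costs two degrees'' observation encode exactly the same content.
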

{\em Proof}. 
Denote $H_a = \{x\in \F_2^n:a^Tx=0\}$.
Let $a,b \in \F_2^n\setminus\{\mathbf{0}\}$ with $a\neq b$ and such that $H_a$ and $H_b$  are degree-drop hyperplanes for $f$. We have to prove that $H_{a+b}$ is also a degree-drop hyperplane for $f$. Without loss of generality, we can assume $a=e_1$ and
$b=e_2$, where $(e_1, \ldots, e_n)$ is the canonical basis of $\F_2^n$, made of all the weight 1 vectors (otherwise we can use an invertible affine transformation as
in Lemma~\ref{lem:affine-invariance}). By
Lemma~\ref{lem:bits-and-bobs} (ii), for $i=1,2$, $H_{e_i}$ is a
degree-drop hyperplane of $f$ if and only if all monomials of $f$
contain $x_i$.
 Hence $f(x_1, \ldots, x_n) = x_1 x_2 g(x_3,
\ldots, x_n)$ for some homogeneous function $g$ of degree $\deg(f)-2$. The hyperplane $H_{e_1+e_2}$ being defined by the equation
 $x_1=x_2$, one can verify that substituting $x_1$ with $x_2$ in $f$ yields $x_2 g(x_3,
\ldots, x_n)$, which has degree $\deg(f)-1$, i.e\ $H_{e_1+e_2}$ is a degree-drop hyperplane. \hfill $\Box$

Proposition~\ref{vshyp} implies that the number of linear degree-drop spaces of $f$ equals $2^j-1$ for some integer $j\ge 0$.
From Lemma~\ref{lem:bits-and-bobs} (ii), Theorem~\ref{theo2} and Proposition~\ref{vshyp} we obtain:
\begin{corollary}\label{cor:function-with-given-number-of-hyperplanes}
  A homogeneous function $f$ of degree $r$ in $n$ variables has $2^j-1$ degree-drop linear hyperplanes if and only if $f\sim_{r-1} x_1 x_2 \cdots x_j g(x_{j+1}, \ldots,x_n)$ for some homogeneous function $g$ of degree $r-j$ in $n-j$ variables which has no degree-drop hyperplanes.
\end{corollary}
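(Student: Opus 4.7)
The plan is to reformulate the hypothesis via Proposition~\ref{vshyp}, use affine invariance to put $f$ in a canonical form, and then read off the factorization. By Proposition~\ref{vshyp}, the set $V=\{\mathbf{0}\}\cup\{a\in\F_2^n\setminus\{\mathbf{0}\}:\{x:a^Tx=0\}\text{ is a degree-drop hyperplane of }f\}$ is a vector subspace of $\F_2^n$, so $f$ has $2^j-1$ degree-drop linear hyperplanes if and only if $\dim V=j$. Since this count is preserved under $\sim_{r-1}$ by Lemma~\ref{lem:affine-invariance}, I would apply an invertible linear change of variables mapping $V$ onto $\mathrm{span}(e_1,\ldots,e_j)$, so that after this change we may assume $V=\mathrm{span}(e_1,\ldots,e_j)$.

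For the forward direction, with $V=\mathrm{span}(e_1,\ldots,e_j)$, each hyperplane $\{x_i=0\}$ for $i\in\{1,\ldots,j\}$ is degree-drop, so Lemma~\ref{lem:bits-and-bobs}(ii) forces every monomial of $f$ to contain $x_i$. Applying this for $i=1,\ldots,j$ yields a factorization $f=x_1x_2\cdots x_j\,g(x_{j+1},\ldots,x_n)$ with $g$ homogeneous of degree $r-j$. I still have to verify that $g$ has no degree-drop hyperplane, which by Lemma~\ref{lem:affine-vs-linear-dd-space} reduces to checking that $g$ has no degree-drop linear hyperplane: if some linear hyperplane $H'$ of $\F_2^{n-j}$ with defining equation $\sum_{i=j+1}^n a_ix_i=0$ were degree-drop for $g$, then the same equation viewed in $\F_2^n$ would define a degree-drop linear hyperplane $H$ of $f$ (because $f_{\mid H}=x_1\cdots x_j\cdot g_{\mid H'}$) whose defining vector lies outside $\mathrm{span}(e_1,\ldots,e_j)=V$, contradicting the choice of $V$.

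For the backward direction, by affine invariance I may assume $f=x_1\cdots x_j\,g(x_{j+1},\ldots,x_n)$. Lemma~\ref{lem:bits-and-bobs}(ii) shows each $\{x_i=0\}$ with $i\le j$ is degree-drop, and Proposition~\ref{vshyp} then produces the full set of $2^j-1$ degree-drop linear hyperplanes associated to nonzero vectors in $\mathrm{span}(e_1,\ldots,e_j)$. The step I expect to be the main obstacle is ruling out any further degree-drop linear hyperplane $H_a$ with $a=(a_1,\ldots,a_n)\notin\mathrm{span}(e_1,\ldots,e_j)$, i.e.\ with some $a_k\ne 0$ for $k>j$; after reindexing I may assume $a_n=1$ and substitute $x_n\mapsto\sum_{i=1}^{n-1}a_ix_i$ into $f$. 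The subtlety is that this substitution produces both terms $a_ix_i$ with $i\le j$ and with $i>j$; using $x_i^2=x_i$, every product $x_1\cdots x_j\cdot a_ix_i$ with $i\le j$ collapses to $a_ix_1\cdots x_j$, so after being multiplied by the remaining factors arising from $g$ these contributions have degree at most $j+(r-j-1)=r-1$. The degree-$r$ part of $f_{\mid H_a}$ is therefore exactly $x_1\cdots x_j\cdot g_{\mid H'}$, where $H'$ is the proper linear hyperplane $\sum_{i=j+1}^n a_ix_i=0$ of $\F_2^{n-j}$. Since $g$ has no degree-drop hyperplane, $\deg(g_{\mid H'})=r-j$, so $\deg(f_{\mid H_a})=r$, contradicting our assumption.
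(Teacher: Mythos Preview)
Your proof is correct and follows essentially the same line as the paper, which does not give an explicit argument but simply states that the corollary follows from Lemma~\ref{lem:bits-and-bobs}(ii), Theorem~\ref{theo2} and Proposition~\ref{vshyp}. You use Proposition~\ref{vshyp} and Lemma~\ref{lem:bits-and-bobs}(ii) exactly as intended; the only minor difference is that instead of invoking Theorem~\ref{theo2} (which would give the $j=1$ step to iterate), you carry out the backward direction and the verification that $g$ has no degree-drop hyperplane by a direct substitution argument, which is equally valid and arguably cleaner.
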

Theorem~\ref{theo2} can be generalised to spaces of arbitrary co-dimension as follows:
%
\begin{theorem}\label{carc cod k}
Let $f$
be an $n$-variable homogenous Boolean
function of algebraic degree $r\le n$ and let $k\le n$. The following statements are equivalent:\\
(i) $f$ has a degree-drop affine space of co-dimension $k$.\\
(ii) $f \sim_{r-1} g$ for some homogeneous function $g$ of degree $r$ such that
each monomial of $g$ contains at least one of the
variables $x_1, x_2,...,x_k$.  \\The affine permutation $\varphi$ such that $g(x)=f\circ\varphi(x)+c(x)$ for some $c$ with $\deg(c)<r$, can be any affine transformation $\varphi$
mapping the vector space of equations $x_1=0$, $x_2=0$,..., $x_k=0$ to a degree-drop space of $f$ of co-dimension $k$.
\end{theorem}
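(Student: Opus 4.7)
The plan is to anchor both directions at the canonical linear space $A_0 := \{x \in \F_2^n : x_1 = x_2 = \cdots = x_k = 0\}$ of co-dimension $k$, for which the condition ``$\deg(g_{\mid A_0}) < \deg(g)$'' for a homogeneous $g$ of degree $r$ coincides with ``every monomial of $g$ contains at least one of $x_1,\ldots,x_k$''. The rest follows by transferring between $A_0$ and an arbitrary co-dimension-$k$ affine space via an affine permutation, exactly in the spirit of the proof of Theorem~\ref{theo2} for $k=1$.

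For $(ii) \Rightarrow (i)$: if $g$ is homogeneous of degree $r$ and every monomial of $g$ uses at least one of $x_1,\ldots,x_k$, then substituting $x_1 = \cdots = x_k = 0$ annihilates all monomials, so $g_{\mid A_0} \equiv 0$ and in particular $\deg(g_{\mid A_0}) < r$. Writing $g = f\circ\varphi + c$ with $\deg(c) < r$ for the witnessing affine automorphism $\varphi$, Lemma~\ref{lem:affine-invariance} then guarantees that $\varphi(A_0)$ is a degree-drop affine space of $f$ of co-dimension $k$.

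For $(i) \Rightarrow (ii)$: given a degree-drop affine space $A$ of $f$ of co-dimension $k$, I would choose any affine permutation $\varphi$ of $\F_2^n$ with $\varphi(A_0) = A$, which exists since $A_0$ and $A$ both have dimension $n-k$. Define $g$ to be the sum of the degree-$r$ monomials appearing in the ANF of $f\circ\varphi$ and set $c := f\circ\varphi + g$, so $\deg(c) < r$, $g = f\circ\varphi + c$, and $f \sim_{r-1} g$. Since $\varphi_{\mid A_0}$ is an affine bijection $A_0 \to A$ and affine bijections preserve degree, $\deg((f\circ\varphi)_{\mid A_0}) = \deg(f_{\mid A}) < r$; combined with $\deg(c_{\mid A_0}) \le \deg(c) < r$, this gives $\deg(g_{\mid A_0}) < r$. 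But $g_{\mid A_0}$ is obtained by substituting $x_1 = \cdots = x_k = 0$ in $g$, so it consists precisely of those monomials of $g$ that involve none of $x_1,\ldots,x_k$; as $g$ is homogeneous of degree $r$, any such surviving monomial would itself have degree $r$, contradicting the previous bound. Hence no monomial of $g$ avoids all of $x_1,\ldots,x_k$, and the claim about the admissible choice of $\varphi$ is precisely the freedom we exploited in picking $\varphi$ with $\varphi(A_0) = A$.

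The main obstacle is the technical fact that composing a homogeneous polynomial with an affine automorphism of $\F_2^n$ (after reduction modulo $x_i^2 + x_i$, as required to keep the ANF) need not remain homogeneous, so $f\circ\varphi$ cannot play the role of $g$ directly; one must separate the top-degree part $g$ from a lower-degree correction $c$ and verify that restriction to $A_0$ interacts correctly with both pieces. Once this bookkeeping is carried out, the equivalence between ``vanishing of a homogeneous degree-$r$ polynomial on $A_0$'' and ``every monomial contains at least one of $x_1,\ldots,x_k$'' drives the whole argument.
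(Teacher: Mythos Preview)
Your proof is correct and follows essentially the same approach as the paper's: both anchor the argument at the canonical space $E_k = A_0 = \{x_1 = \cdots = x_k = 0\}$, use an affine permutation $\varphi$ with $\varphi(A_0) = A$ to transfer between $A_0$ and an arbitrary degree-drop space, split $f\circ\varphi$ into its homogeneous degree-$r$ part $g$ plus a lower-degree correction $c$, and then observe that for homogeneous $g$ the condition $\deg(g_{\mid A_0}) < r$ is exactly ``every monomial of $g$ contains some $x_i$ with $i\le k$''. Your explicit discussion of why $f\circ\varphi$ need not be homogeneous (and hence cannot serve as $g$ directly) is a nice clarification, but the underlying argument is the same as the paper's invocation of Lemma~\ref{lem:affine-invariance}.
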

{\em Proof}. Let us denote by $E_k$ the space defined by the equations
$x_1=0$, $x_2=0$,..., $x_k=0$ and assume that (ii) holds. The space $E_k$ is a degree-drop space for any function $g+c$ where $c$ has algebraic degree less than $r$, since we have $g_{\mid E_k}=0$. Due to the invariance under equivalence $\sim_{r-1}$ of the existence of a degree-drop space (see Lemma~\ref{lem:affine-invariance}), $f$ will also have a degree-drop space of co-dimension $k$.
\\
Now assume (i) holds, i.e.\ $f$ has a degree-drop space $A$ of co-dimension $k$. Consider the affine transformation $\varphi$
such that $\varphi(E_k)=A$. We can write $f\circ\varphi(x)$ as
 $g(x)+c(x)$ where $g$ is homogeneous of degree $r$ and
$\deg(c)<r$. From Lemma \ref{lem:affine-invariance} again, $g(x) = f\circ\varphi(x) + c(x)$ has $\varphi^{-1}(A) = E_k$ as a
degree-drop space, i.e.\ $\deg(g_{\mid E_k})<r$. In
$g_{\mid E_k}$ all the monomials of $g$  that contain at least one
of the variable $x_1,x_2,...,x_k$ will become zero after substitution. If there was a monomial of $g$ which does
not contain any of the variable $x_1,x_2,...,x_k$, it would clearly
remain unchanged in $g_{\mid E_k}$ which is a contradiction. We obtain thus (ii). \hfill
$\Box$

\begin{remark}Theorems~\ref{theo2} and~\ref{carc cod k} allow us to easily construct functions which have degree-drop hyperplanes or degree-drop spaces of co-dimension $k$. Namely, for hyperplanes, we consider functions of the form $x_1g(x_2, \ldots, x_n)$, and for affine spaces of co-dimension $k$ we can pick monomials such that all of the monomials contain at least one of the variables $x_1, \ldots, x_k$.
Moreover if we then apply affine changes of variables to the functions we constructed, we can obtain all the functions that have degree-drop hyperplanes (or spaces of co-dimension $k$, respectively).
\end{remark}

\subsection{Practical sufficient conditions and related constructions}
If we want to construct a function which does not have any degree-drop hyperplane, Theorems~\ref{theo2} and \ref{carc cod k} are not very useful. In the case of Theorem~\ref{theo2}, we would need to construct a function which is not of the form $x_1 g(x_2, \ldots, x_n)$ (this part is easy), and, moreover, it is not affine equivalent to a function of the form $x_1 g(x_2, \ldots, x_n)$; checking the latter is unfeasible (it has exponential complexity). Constructing functions without a degree-drop hyperplane of codimension $k$ using Theorem \ref{carc cod k} is even more complex.


The following results, which
under additional constraints make the conditions in Lemmas \ref{lemma:c1} and \ref{cond2} sufficient (but no longer necessary), allow to  provide families of functions which have no degree-drop hyperplanes and can be constructed efficiently.


\begin{proposition} \label{prop:card-of-monomial-intersection}
Let $f=\sum_{i=1}^pm_i$ be an $n$-variable homogeneous Boolean function of algebraic
degree $r$ with $2\le r\le n$, where the $m_i$'s are monomials.
\\
If $f$ satisfies Condition~(\ref{c1}) and the condition
\begin{equation}\label{c2}
  |\var(m_i)\cap \var(m_j)|\leq r-2 \mbox{, for all $i\neq j$ and } i, j\in\{1,\ldots,p\},
\end{equation}
then $f\in
K_{1,r,n}$ i.e.\ $f$ has no degree-drop hyperplane.
\\
More generally, for any $k < r$, if for any set of $k$ distinct variables $x_{j_1},...,x_{j_k}$, there is at least one monomial in $f$ which does not contain any of the variables $x_{j_1},...,x_{j_k}$, and if
\begin{equation}\label{c3}
  |\var(m_i)\cap \var(m_j)|\leq r-k-1 \mbox{, for all $i\neq j$ and } i, j\in\{1,\ldots,p\},
\end{equation}
then $f\in K_{k,r,n}$  i.e.\ $f$ has no degree-drop space of co-dimension $k$.
\end{proposition}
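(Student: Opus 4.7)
\textit{Plan.} I would prove the general statement (condition~(\ref{c3}) with arbitrary $k<r$) directly; this subsumes the $k=1$ case since condition~(\ref{c2}) is merely (\ref{c3}) specialised to $k=1$, while condition~(\ref{c1}), $\bigcap_{i}\var(m_i)=\emptyset$, is precisely the hypothesis of Lemma~\ref{cond2} for $k=1$ (for every single variable, some monomial avoids it). The argument is by contradiction: assume $f$ admits a degree-drop space of co-dimension $k$ and derive a monomial that refuses to disappear.

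Suppose $A$ is such a degree-drop space. By Lemma~\ref{lem:affine-vs-linear-dd-space} we may take $A$ to be a linear subspace, cut out by $k$ linearly independent linear forms. Performing Gaussian elimination, $A$ is described by pivot equations $x_s=\ell_s(y)$ for $s$ in a set $P\subseteq\{1,\dots,n\}$ with $|P|=k$, each $\ell_s$ being a linear form in the non-pivot variables $y=(x_j)_{j\notin P}$. Applying the hypothesis of the proposition to the $k$ variables indexed by $P$, we obtain a monomial $m_{i_0}$ of $f$ with $\var(m_{i_0})\cap P=\emptyset$, so $m_{i_0}$ is untouched by the substitution.

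The main step is to show that no other monomial $m_i$ of $f$ (with $i\ne i_0$) contributes to the coefficient of $m_{i_0}$ in the ANF of $f_{\mid A}$. Writing $S_i=\var(m_i)\cap P$ and $T_i=\var(m_i)\setminus P$ (so $|S_i|+|T_i|=r$), the substitution turns $m_i$ into
$$\prod_{s\in S_i}\ell_s(y)\cdot\prod_{j\in T_i}x_j.$$
Any expanded monomial of degree exactly $r$ arises by selecting one distinct variable out of each $\ell_s$, with none of the chosen variables lying in $T_i$. If such an expanded monomial coincides with $m_{i_0}$, the kept variables $T_i$ must all appear in $\var(m_{i_0})$, whence $T_i\subseteq\var(m_i)\cap\var(m_{i_0})$ and, by condition~(\ref{c3}), $|T_i|\le r-k-1$. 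But $|S_i|\le|P|=k$ forces $|T_i|=r-|S_i|\ge r-k$, a contradiction.

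Hence the coefficient of $m_{i_0}$ in the ANF of $f_{\mid A}$ equals $1$ (the contribution of $m_{i_0}$ itself), so $\deg(f_{\mid A})\ge r=\deg(f)$, contradicting the assumption that $A$ is a degree-drop space. The main obstacle is the bookkeeping in the expansion step: one must be sure that only degree-$r$ terms of the expanded substitutions can match $m_{i_0}$ (immediate because $m_{i_0}$ has degree $r$), and that the contribution of $m_{i_0}$ itself is cleanly $1$ with no possible cancellation; once this is set up correctly, the inequality $r-k\le|T_i|\le r-k-1$ delivers the contradiction at once.
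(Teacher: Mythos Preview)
Your proof is correct and follows essentially the same approach as the paper: after Gaussian elimination, pick a monomial $m_{i_0}$ avoiding the pivot variables, then observe that any degree-$r$ term arising from another $m_i$ after substitution retains at least $r-k$ of $m_i$'s original (non-pivot) variables, which by condition~(\ref{c3}) cannot all lie in $\var(m_{i_0})$. The only cosmetic difference is that you invoke Lemma~\ref{lem:affine-vs-linear-dd-space} to reduce to linear spaces and frame the argument by contradiction, whereas the paper works directly with affine equations and shows $\deg(f_{\mid A})=r$ for arbitrary $A$.
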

{\em Proof}.
The first part of the statement is clearly a particular case of the second part. We then prove the second part. Let $A$ be an affine space of co-dimension $k$. $A$ is characterized
by a system of $k$ independent affine equations, and without loss of generality (thanks to a Gaussian reduction and after possibly a renaming of the variables)
we can assume that these equations write $x_{i}=a_i(x_1, \ldots, x_{n-k})$, with
$i$ ranging from $n-k+1$ to $n$ and $a_i$ being affine. We obtain $f_{\mid_A}$ by using these equations to substitute $x_{n-k+1}, \ldots, x_n$.
By hypothesis, there is at least one monomial that does not contain any of the variables $x_{n-k+1},\dots ,x_n$. Without loss of generality, we can assume this is the monomial $m_p$.
Then we have ${m_p}_{\mid_A} = m_p$. Let us show that $m_p$ cannot be cancelled by any of the monomials
obtained after the substitutions from the $m_j$'s such that $j\neq p$. For any such $m_j$, we have that ${m_j}_{\mid_A}$ consists of a (possibly empty) sum of monomials which are either of degree smaller than $r$, or of degree $r$ and
containing at least $r-k$ of the variables of $m_j$ (as at most $k$ of the $r$ variables of $m_j$ have been substituted); but $|\var(m_j)\cap \var(m_p)|\leq r-k-1$  implies that each degree $r$ monomial of ${m_j}_{\mid_A}$ contains at least one variable of $m_j$ which does not appear in $m_p$.
Therefore, after the substitution, the monomial $m_p$ cannot be cancelled by any of the other monomials in $f_{\mid_A}$, so $\deg(f_{\mid_A}) = \deg(f)$.
\hfill $\Box$\\
 For homogeneous functions  having at most 3 monomials, we have a complete characterization. The case of one monomial is covered by Lemma~\ref{lem:bits-and-bobs}(iii). For $f=m_1+m_2$ with $m_i$ monomials, we know from  Lemma~\ref{lemma:c1}
and Proposition~\ref{prop:card-of-monomial-intersection} that $f$ has no degree-drop hyperplane if and only if $\var(m_1)\cap \var(m_2) = \emptyset$.
\begin{lemma} Let $f = m_1+m_2+m_3$ be an homogeneous $n$-variable Boolean
function of degree $r$ where $m_i$ are monomials and $n\ge 3$. We have that $f\in K_{1,r,n}$ if and only if $f$ satisfies condition~(\ref{c1}) and
\begin{itemize}
\item either $|\var(m_i)\cap \var(m_j)|\leq r-2$ for all $i\neq j$
\item or, after possibly permuting the indices of $m_1, m_2, m_3$, we have
$m_1 = x_{t_1}\mu(x)$, $m_2 = x_{t_2}\mu(x)$ where $t_1\neq t_2$, $\mu$ is a monomial of degree $r-1$, $\{t_1,t_2\}\cap \var(\mu) = \emptyset$ and  $\{t_1,t_2\}\nsubseteq \var(m_3)$.
\end{itemize}
\end{lemma}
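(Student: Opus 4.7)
The plan is to prove both implications using Proposition~\ref{prop:card-of-monomial-intersection} as the main engine, combined with the affine invariance of $K_{1,r,n}$ from Lemma~\ref{lem:affine-invariance}.

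For the forward direction, I would note first that condition~(\ref{c1}) is immediate from Lemma~\ref{lemma:c1}. Then, assuming the first bullet fails, some pair $m_i,m_j$ with $i\neq j$ satisfies $|\var(m_i)\cap\var(m_j)|\geq r-1$; since they are distinct degree-$r$ monomials the intersection is exactly $r-1$, so after relabelling I may write $m_1 = x_{t_1}\mu$ and $m_2 = x_{t_2}\mu$ with $t_1\neq t_2$ and $\{t_1,t_2\}\cap\var(\mu)=\emptyset$. It remains to establish $\{t_1,t_2\}\not\subseteq\var(m_3)$, which I would prove by contradiction: if $m_3 = x_{t_1}x_{t_2}\nu$, then restricting to the hyperplane $x_{t_1}+x_{t_2}=0$ (i.e., substituting $x_{t_1}=x_{t_2}$) makes $m_1|_H + m_2|_H = 2x_{t_2}\mu = 0$ while $m_3|_H = x_{t_2}\nu$, so $f|_H$ has degree $r-1$, contradicting $f\in K_{1,r,n}$.

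For the converse, assuming condition~(\ref{c1}) and one of the bullets, the first-bullet case is a direct application of Proposition~\ref{prop:card-of-monomial-intersection}. For the second-bullet case I would use a change-of-variable trick. After possibly swapping $t_1$ and $t_2$ (the hypothesis $\{t_1,t_2\}\not\subseteq\var(m_3)$ is symmetric) I may assume $t_1\notin\var(m_3)$. Applying the affine automorphism $\varphi$ that sends $x_{t_1}\mapsto x_{t_1}+x_{t_2}$ and fixes every other variable, I compute $m_1\circ\varphi = x_{t_1}\mu + x_{t_2}\mu$, $m_2\circ\varphi = x_{t_2}\mu$, and $m_3\circ\varphi = m_3$ (because $t_1\notin\var(m_3)$). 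The two copies of $x_{t_2}\mu$ cancel, leaving $f\circ\varphi = x_{t_1}\mu + m_3$ when $t_2\notin\var(m_3)$, or $f\circ\varphi = x_{t_1}\mu + x_{t_2}\nu$ (where $m_3 = x_{t_2}\nu$) when $t_2\in\var(m_3)$. In either subcase $f\circ\varphi$ is a sum of two monomials whose variable sets are disjoint: the key point is that condition~(\ref{c1}) for the original $f$ gives $\var(\mu)\cap\var(m_3) = \bigcap_{i=1}^{3}\var(m_i)=\emptyset$, and in the second subcase $\var(\nu)\subseteq\var(m_3)$, so $\var(\mu)\cap\var(\nu)=\emptyset$ as well. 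Proposition~\ref{prop:card-of-monomial-intersection} with $p=2$ then gives $f\circ\varphi\in K_{1,r,n}$, and Lemma~\ref{lem:affine-invariance} transfers this back to $f$.

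The main obstacle is spotting the right change of variables: once one observes that $m_1+m_2 = (x_{t_1}+x_{t_2})\mu$, the substitution $x_{t_1}\mapsto x_{t_1}+x_{t_2}$ collapses the pair into a single monomial, and the hypothesis $\{t_1,t_2\}\not\subseteq\var(m_3)$ is precisely what prevents $m_3$ from being broken into several monomials by the same substitution. This is what makes the two-monomial case of Proposition~\ref{prop:card-of-monomial-intersection} applicable and is the only place where the specific form of the second bullet is actually used.
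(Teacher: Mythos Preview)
Your proof is correct and follows essentially the same approach as the paper: the same hyperplane $x_{t_1}=x_{t_2}$ for the forward direction, and the same change of variable $x_{t_1}\mapsto x_{t_1}+x_{t_2}$ (assuming $t_1\notin\var(m_3)$) to reduce the converse to the two-monomial case handled by Proposition~\ref{prop:card-of-monomial-intersection}. Your subcase split on whether $t_2\in\var(m_3)$ is harmless but unnecessary, since in both cases $m_3\circ\varphi=m_3$ and $\var(x_{t_1}\mu)\cap\var(m_3)=\emptyset$ follows directly from $t_1\notin\var(m_3)$ together with condition~(\ref{c1}).
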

{\em Proof.} In view of Lemma~\ref{lemma:c1} and Proposition~\ref{prop:card-of-monomial-intersection} we only have to prove that in the case when $f$ satisfies condition~(\ref{c1}) and two among the three monomials (say, $m_1$ and $m_2$) satisfy $|\var(m_1)\cap \var(m_2)|=r-1$, that is, $f = m_1 + m_2 + m_3$ with $m_1 = x_{t_1}\mu(x)$, $m_2 = x_{t_2}\mu(x)$, $t_1\neq t_2$, $\{t_1,t_2\}\cap \var(\mu) = \emptyset$ and $\mu$ is a monomial of degree $r-1$, the following holds: $f\in K_{1,r,n}$ if and only if $\{t_1,t_2\}\nsubseteq \var(m_3)$.
Assume first that $f \in K_{1,r,n}$. Consider the hyperplane $H$ defined by the equation $x_{t_1} = x_{t_2}$. Substituting $x_{t_1}$ in $f$ we obtain $f_{|_H} = {m_3}_{|_H}$. Then ${m_3}_{|_H}$ has degree $r$, so $m_3$ cannot contain both $x_{t_1}$ and $x_{t_2}$. Conversely, if $\{t_1,t_2\}\nsubseteq \var(m_3)$,
we can assume without loss of generality that $m_3$  does not contain $x_{t_1}$. Doing for $f$ the invertible affine change of variables which replaces $x_{t_1}$ by $x_{t_1} + x_{t_2}$ and leaves the other variables unchanged we obtain the polynomial $x_{t_1}\mu + m_3$ which
satisfies the condition in $(ii)$ (as $\var(\mu) \cap \var(m_3) = \emptyset$ by condition ~(\ref{c1})) so it belongs to $K_{1,r,n}$.
%
\hfill $\Box$

As an example of how to use Proposition~\ref{prop:card-of-monomial-intersection}, let $n$ be a multiple of $k+1$ and $r\geq k+1$. Let us start  with the monomial $x_1\cdots x_r$ and then construct iteratively monomials by taking the last $r-k-1$ variables from the current monomial, together with the next $k+1$
variables, to build the next monomial (the monomial following $x_1\cdots x_r$ being then $x_{k+2}\cdots x_{k+r}x_{k+r+1}$). We continue this process, and the indices of variables being viewed circularly in $\{1, 2, \ldots, n\}$, this leads to $\frac n{k+1}$ monomials satisfying Condition~\eqref{c3}. 
We have to then ensure that 
 $n$ is large enough so that, for any set of $k$ distinct variables $x_{j_1},...,x_{j_k}$, there is at least one monomial in $f$ which does not contain any of the variables $x_{j_1},...,x_{j_k}$.

\begin{example}\label{ex}For $(n,r,k)=(15, 5, 2)$ and $(10, 7, 1)$ respectively, we obtain:
  \begin{eqnarray*}
    f(x_1, \ldots, x_{15}) &=& x_1x_2x_3x_4x_5 + x_4x_5x_6x_7x_8 +x_7x_8x_9x_{10}x_{11}  \\
     && + x_{10}x_{11}x_{12}x_{13}x_{14} + x_{13}x_{14}x_{15}x_{1}x_{2}  \\
    f(x_1, \ldots, x_{10}) &=& x_1x_2x_3x_4x_5x_6x_7 +x_3x_4x_5x_6x_7x_8x_9 +x_5x_6x_7x_8x_9x_{10}x_1 \\
     &&+  x_7x_8x_9x_{10}x_1x_2x_3   +x_9x_{10}x_1x_2x_3x_4x_5
  \end{eqnarray*}
By Proposition~\ref{prop:card-of-monomial-intersection}, these two functions have no degree-drop space of co-dimension 2 (respectively, 1).
\end{example}

The following construction of functions with no degree-drop hyperplane generalizes\footnote{If $f=\sum_{j=1}^pm_j$ satisfies the first part of Proposition \ref{prop:card-of-monomial-intersection}, then let $i\in \var(f)$; since $f$ satisfies Condition~(\ref{c1}), then there exists $j_0\in\{1,\dots ,p\}$ such that $i\not\in \var(m_{j_0})$; for all $t\in\var(m_{j_0})$, we have $|\var(m_{j_0})\cap\var(x_im_{j_0}/x_t)|=r-1> r-2$, meaning that $x_im_{j_0}/x_t$ does not appear in $f$.} the construction in the first part of  Proposition~\ref{prop:card-of-monomial-intersection}:
\begin{theorem}\label{com.scon.ddhyp} Let $f=\sum_{j=1}^pm_j$ be a homogenous $n$-variable
function of degree $r$. If for all $i\in \var(f)$,  there exists
a monomial $m_{j_i}$ in $f$ such that: \begin{itemize}
\item $i\not\in \var(m_{j_i})$
\item for all $t\in \var(m_{j_i})$, the monomial $\frac{x_im_{j_i}}{x_t}$ is not in $f$,
\end{itemize}
then, $f$ has no degree-drop hyperplane, i.e. $f\in K_{1,r,n}$.
\end{theorem}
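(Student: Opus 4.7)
The plan is to proceed by contradiction. Suppose $f$ has a degree-drop hyperplane. By Lemma~\ref{lem:affine-vs-linear-dd-space}, since $f$ is homogeneous we may take this hyperplane to be linear, given by $L(x)=\sum_{i\in S}x_i=0$ for some nonempty $S$; by Lemma~\ref{lem:bits-and-bobs}(i) we have $S\subseteq \var(f)$. Theorem~\ref{theo2} then yields a factorization $f=L\cdot g+c$ with $\deg(g)=r-1$ and $\deg(c)<r$, and since $f$ is homogeneous of degree $r$ we may replace $g$ by its homogeneous degree-$(r-1)$ part, absorbing any lower-degree terms into $c$.

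The first technical step is to derive a clean coefficient formula. Expanding $L\cdot g$ in the ANF ring and extracting the degree-$r$ part (noting that $x_k\mu$ collapses to $\mu$ when $k\in\var(\mu)$, but is a genuine degree-$r$ monomial when $k\notin\var(\mu)$), one obtains, for every degree-$r$ monomial $m$,
\[
[m\in f]\;\equiv\;\bigl|\{k\in\var(m)\cap S:\, m/x_k\in g\}\bigr|\pmod 2.
\]
Now pick any $i_0\in S$. Applying the hypothesis of the theorem to $i_0\in\var(f)$ yields a monomial $m\in f$ with $i_0\notin\var(m)$ and $x_{i_0}m/x_t\notin f$ for every $t\in\var(m)$. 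Let $W=\var(m)\cap S$. Applying the formula to $m$ gives $\sum_{t\in W}[m/x_t\in g]\equiv 1\pmod 2$, while applying it to each $x_{i_0}m/x_t$ (using $\var(x_{i_0}m/x_t)\cap S=(W\setminus\{t\})\cup\{i_0\}$ and separating the contribution $k=i_0$ from the contributions $k\in W\setminus\{t\}$) yields, for every $t\in\var(m)$,
\[
[m/x_t\in g]\;+\;\sum_{k\in W\setminus\{t\}}[x_{i_0}m/(x_tx_k)\in g]\;\equiv\;0\pmod 2.
\]

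The crux — and the step most prone to a bookkeeping slip — is to sum the last identity over $t\in W$ only (not over all of $\var(m)$). The resulting double sum runs over ordered pairs $(t,k)\in W\times W$ with $t\neq k$; the symmetry $x_{i_0}m/(x_tx_k)=x_{i_0}m/(x_kx_t)$ pairs $(t,k)$ with $(k,t)$, so the double sum vanishes modulo $2$, leaving $\sum_{t\in W}[m/x_t\in g]\equiv 0\pmod 2$. This directly contradicts the parity coming from $m\in f$, so no degree-drop hyperplane can exist. The main obstacle I foresee is not combinatorial depth but rather the careful handling of the reduction to a homogeneous $g$ and the realization that one must sum the vanishing-parity identities over $W$ (and not all of $\var(m)$) to make the symmetric $B_{t,k}$-terms cancel; the hypothesis is tailored precisely so that every monomial $x_{i_0}m/x_t$ referenced by the argument is known to be absent from $f$.
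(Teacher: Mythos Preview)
Your proof is correct, but it takes a noticeably more elaborate route than the paper's. Both arguments begin the same way: assume a linear degree-drop hyperplane with support $S\subseteq\var(f)$, pick $i_0\in S$, and invoke the hypothesis to produce the distinguished monomial $m=m_{j_{i_0}}$. The divergence is in how one shows $m$ survives. The paper simply substitutes $x_{i_0}\mapsto\sum_{k\in S\setminus\{i_0\}}x_k$ and observes that the only way a monomial $m'$ of $f$ containing $x_{i_0}$ can contribute a copy of $m$ after substitution is if $m'=x_{i_0}m/x_t$ for some $t\in\var(m)$; the second bullet of the hypothesis rules all such $m'$ out, so $m$ appears exactly once in $f_{|H}$ and cannot be cancelled --- contradiction, in three lines. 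You instead pass through the factorisation $f=Lg+c$ of Theorem~\ref{theo2}, extract the coefficient identity $[m\in f]\equiv\sum_{k\in\var(m)\cap S}[m/x_k\in g]$, apply it both to $m$ and to every $x_{i_0}m/x_t$, and finish with the symmetric double-sum cancellation over $W\times W$.

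What each approach buys: the paper's substitution argument is shorter, more elementary, and makes the role of the hypothesis transparent (it literally names the forbidden cancelling monomials). Your approach has the virtue of making the dependence on $g$ explicit via a clean coefficient formula, which could be reused for finer questions about the structure of the factorisation; the double-sum trick is also a nice device in its own right. But for this particular statement the paper's direct tracking of $m$ under substitution gets to the contradiction with less machinery and no need to invoke Theorem~\ref{theo2}.
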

{\em Proof}. Assume, for a contradiction, that $f$ satisfies the conditions in the statement and
has a degree-drop hyperplane $H$ defined by the equation
$\sum_{i=1}^n c_ix_i=0$, $c_i\in \mathbb F_2$ (remember that we showed in Lemma \ref{lem:affine-vs-linear-dd-space} that we can restrict ourselves to $H$ being a linear hyperplane).
At least one coefficient is nonzero, so without loss of generality we can assume $c_1\neq 0$. The restriction of
$f$ to the hyperplane $H$ can be obtained by substituting $x_1$ by $\sum_{i=2}^{n}c_ix_i$.
The monomial $m_{j_1}$ remains unchanged in $f_{\mid H}$, as
$1\not\in \var(m_{j_1})$. Since $H$ is a degree-drop hyperplane,
$\deg(f_{\mid H})< \deg(f)$. So $m_{j_1}$ must be canceled out by a
monomial which appears when we subtituted $x_1$ in some degree
$r$ monomial $m$ of $f$ for which $1\in \var(m)$. Denoting $m'=\frac{m}{x_{1}}$ we have
$m_{\mid
H}=\sum_{i=2}^nc_ix_im'.$
Any  degree $r$
monomial of $m_{\mid H}$ is of the form $c_ix_im'$ with
$i\not\in \var(m')$ and $c_i\neq 0$, and one of them, say
$c_{i_1}x_{i_1}m'$, cancels $m_{j_1}$ out, so $c_{i_1}\neq 0$
and $m_{j_1}=x_{i_1}m'$. Therefore, $i_1\in \var(m_{j_1})$ and
$m=x_{1}m'=\frac{x_{1}m_{j_1}}{x_{i_1}}$. But by the second
condition in the theorem statement,
such a monomial $m$ cannot exist in the ANF of  $f$, which is a contradiction.
 \hfill $\Box$\\

Algorithm~\ref{alg:no-dd-hyp} is a non-deterministic algorithm to construct functions without degree-drop hyperplanes using Theorem~\ref{com.scon.ddhyp}.
\begin{algorithm}\caption{FunctionWithoutDegreeDrop$(n, r)$}\label{alg:no-dd-hyp}
\begin{algorithmic}[1]
\Require $n$, $r$ with $3\le r \le n-4$
\Ensure $F$ a set of monomials of degree $r$ in $n$ variables such that $\sum_{m\in F} m$ has no degree-drop hyperplanes
\State $F\leftarrow \emptyset$
\State $V\leftarrow$ the set of monomials of degree $r$ in $n$ variables.
\State $\Forbidden\leftarrow \emptyset$
\For {$i = 1, \ldots,n$}
\State $\Failed_i\leftarrow \emptyset$
\Repeat{}
\State $\success \leftarrow True$
\State choose $m_i\in V \setminus(\Forbidden \cup \Failed_i)$ with $i\not\in \var(m_i)$
\State $V_i \leftarrow \{\frac{m_ix_i}{x_t}: t\in \var(m_i) \}$
\If{$V_i\cap F \neq \emptyset$} 
\State $\Failed_i \leftarrow \Failed_i\cup\{m_i\}$
\State $\success \leftarrow False$
\EndIf
\Until{$\success$}
\State $F \leftarrow F\cup \{m_i\}$ 
\State $\Forbidden \leftarrow \Forbidden \cup V_i$
\EndFor
\State choose a subset $G\subseteq V\setminus\Forbidden$ 
\State $F\leftarrow F \cup G$
\State return $F$
\end{algorithmic}
 \end{algorithm}

\begin{proposition}\label{prop:alg-correctness}
    For $n\ge 10$ and $3\le r \le n-4$, as well as for $n=9$ and $r\in\{4,5\}$, Algorithm~\ref{alg:no-dd-hyp} is correct. 
    In particular, for each $i$, there is always a monomial which does not contain $x_i$ in the set  $V \setminus(\Forbidden \cup \Failed_i)$.
    The worst-case complexity of  Algorithm~\ref{alg:no-dd-hyp} is $\mathcal{O}(n^4)$ and the average complexity $\mathcal{O}(n^2)$.
\end{proposition}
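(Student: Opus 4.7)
My plan has three parts. First, correctness: when $m_i$ is appended in step 15, step 10 guarantees $V_i\cap F=\emptyset$ and $i\notin\var(m_i)$; immediately after, $V_i\subseteq\Forbidden$, and every later insertion (from subsequent iterations of the \emph{for} loop, or from the bonus set $G\subseteq V\setminus\Forbidden$) avoids $\Forbidden$, hence also $V_i$. Thus $V_i\cap F=\emptyset$ persists in the final $F$, and Theorem~\ref{com.scon.ddhyp} applies with $m_{j_i}=m_i$, giving $f=\sum_{m\in F}m\in K_{1,r,n}$.

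Second, non-emptiness. Fix iteration $i$ and, for a candidate monomial $m$ of degree $r$ with $i\notin\var(m)$, write $V(m)=\{mx_i/x_t:t\in\var(m)\}$; call $m$ \emph{bad} if $V(m)\cap F\neq\emptyset$. Put $A=|\{j<i:x_i\in\var(m_j)\}|$ and $B=(i-1)-A$. If $x_i\in\var(m_j)$, exactly one element of $V_j$ avoids $x_i$ (the one obtained by taking $x_t=x_i$); otherwise all $r$ elements of $V_j$ avoid $x_i$. Hence $|\Forbidden\cap\{m:i\notin\var(m)\}|\leq A+Br$. A bad candidate $m$ must satisfy $mx_i/x_t=m_j$ for some $j<i$ with $x_i\in\var(m_j)$ and $x_t\notin\var(m_j)$, giving at most $A(n-r)$ bad $m$'s. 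Summing, the excluded set has size at most
\[
A(n-r+1)+Br\;\leq\;(n-1)\max(n-r+1,r),
\]
and a routine check confirms that $\binom{n-1}{r}>(n-1)\max(n-r+1,r)$ precisely when $n\geq10$ with $3\leq r\leq n-4$, or $(n,r)\in\{(9,4),(9,5)\}$. So a good candidate always survives; in particular the pool $V\setminus(\Forbidden\cup\Failed_i)$ contains a monomial without $x_i$ throughout the repeat loop, which therefore terminates.

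Third, complexity. Storing $F$ in a hash table makes each pass of the repeat loop cost $\mathcal{O}(r)=\mathcal{O}(n)$. The number of bad candidates ever added to $\Failed_i$ is at most $A(n-r)=\mathcal{O}(n^2)$, so iteration $i$ costs $\mathcal{O}(n^3)$ and the whole algorithm $\mathcal{O}(n^4)$. For the average case, the fraction of bad monomials among valid candidates is $O(1/n)$ uniformly in the stated parameter range, so the expected number of passes per iteration is $\mathcal{O}(1)$, giving expected total cost $\mathcal{O}(n^2)$. The main obstacle is the sharpness of the counting in the second step: the naive bound $|\Forbidden|\leq(n-1)r$ combined with $(n-1)(n-r)$ bad candidates gives exclusion $\leq(n-1)n$, which fails at the boundary cases $(n,r)\in\{(10,3),(10,6)\}$; the refinement of splitting $F$ by whether $x_i\in\var(m_j)$ replaces the factor $n$ by $\max(n-r+1,r)$, which is exactly sharp enough to cover all stated $(n,r)$.
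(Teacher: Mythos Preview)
Your proof is correct and follows essentially the same approach as the paper: the same split of the previously chosen $m_j$ according to whether $x_i\in\var(m_j)$, the same counts $A+Br$ for the forbidden monomials and $A(n-r)$ for the failed ones, and the same complexity analysis. Your bound $(n-1)\max(n-r+1,r)$ is in fact exactly what the paper obtains before it loosens to $(n-1)(n-2)$; by keeping the sharper form you avoid a small boundary glitch in the paper's write-up (at $(n,r)=(9,5)$ one has $\binom{8}{5}=56=(n-1)(n-2)$, so the loosened inequality is not strict, whereas $(n-1)\max(n-r+1,r)=40<56$ still works), and your explicit verification that $V_i\cap F=\emptyset$ persists in the final $F$ makes the appeal to Theorem~\ref{com.scon.ddhyp} more transparent than the paper's one-line reference.
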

\begin{proof}
Assume the algorithm execution arrived at step $i=\ell$ of the {\bf for} loop, and it is testing different candidates for $m_{\ell}$ in the {\bf repeat} loop. There are $\binom{n-1}{r}$ monomials in $V$ which do not contain $x_{\ell}$.  Let us denote by $C_{\ell}$ the number of elements in the set $\Forbidden\cup \Failed_{\ell}$ that do not contain $x_{\ell}$.  We will give an upper bound for $C_{\ell}$.

We first give an upper bound for the number of elements in the set $\Forbidden$ that do not contain $x_{\ell}$. Note that $\Forbidden = \cup_{j=1}^{{\ell}-1}V_j$ at this point. Consider each previous monomial $m_j$, with $1\le j <{\ell}$. If $m_j$ contains $x_{\ell}$, then 
$V_j$ contains $r-1$ monomials that contain $x_{\ell}$ and one that does not; if $m_j$ does not contain $x_{\ell}$, then none of the $r$ elements of 
 $V_j$  contain $x_{\ell}$. Therefore, denoting  by $n_{\ell}$ the number of monomials  among  $m_1,\ldots,m_{{\ell}-1}$ which contain $x_{\ell}$, we have that  $\Forbidden$ contains at most $n_{\ell}+({\ell}-1-n_{\ell})r$ monomials that do not contain $x_{\ell}$ (note that the sets 
$V_j$
might not be disjoint for different $j$, but we only need an upper bound).

Next we consider the cardinality of $\Failed_{\ell}$. Note that $\Failed_{\ell}$ contains the previous choices for $m_{\ell}$, which have failed because the corresponding $V_{\ell}$ satisfied  
 $V_{\ell}\cap F \neq \emptyset$. This condition means that $m_{\ell}$ was a monomial such that there was a  $t\in \var(m_{\ell})$  with $\frac{m_{\ell}x_{\ell}}{x_t} \in F$. At this point, $F = \{m_1, \ldots, m_{{\ell}-1}\}$, so there must be $1\le j <{\ell}$ such that $\frac{m_{\ell}x_{\ell}}{x_t} = m_j$. In other words, $m_{\ell} = \frac{m_jx_t}{x_{\ell}}$ for some $j<{\ell}$ such that  $m_j$ contains $x_{\ell}$ but does not contain $x_t$. For each of the $n_{\ell}$ values of $m_j$ which contain $x_{\ell}$ there are $n-r$ possibilities for $x_t$. So in total there are at most $n_{\ell}(n-r)$ possible monomials in $\Failed_{\ell}$.

 To summarise,
 \begin{eqnarray*}
    C_{\ell} &\le&  n_{\ell}(n-r) + n_{\ell}+({\ell}-1-n_{\ell})r\\ 
    &=&  n_{\ell}(n-2r+1) + ({\ell}-1)r.
\end{eqnarray*}
For fixed $n$ and $r$, we determine the values of ${\ell}$ and $n_{\ell}$
for which $n_{\ell}(n-2r+1) + ({\ell}-1)r$ is maximal (keeping in mind that
$1\le n_{\ell}\le {\ell}-1\le n-1$).
When $n-2r+1>0$, the maximum of this quantity is achieved for ${\ell}=n$ and $n_{\ell}=n-1$, and this maximum value equals $(n-1)(n-r+1)$. 
 When $n-2r+1\le 0$, the maximum of this quantity is achieved for ${\ell}=n$ and $n_{\ell}=0$, and this maximum value equals $(n-1)r$. Hence, in all cases we have $C_{\ell} \le (n-1)(n-2)$.
 It is an easy exercise to show that when $3\le r < n-4$ and $n\ge 10$ (also when $n=9$ and $r\in\{4,5\}$) we have $\binom{n-1}{r} - (n-1)(n-2)>0$, and therefore there is always at least one possible choice for $m_{\ell}$ that does not contain $x_{\ell}$ and  $m_{\ell}\in V \setminus(\Forbidden \cup \Failed_{\ell})$.

The fact that the  value of $F$ returned by the algorithm is indeed such that $\sum_{m\in F} m$ has no degree-drop hyperplane is a direct consequence of Theorem~\ref{com.scon.ddhyp}.

 For the complexity, note first that computing 
 $V_i$ 
has $\mathcal{O}(n)$ complexity.  The {\bf repeat} loop is executed at most $C_i+1$ times, i.e. at most $\mathcal{O}(n^2)$ times. Each execution involves one computation of $V_i$.
Combined with the outer {\bf for} loop being run $n$ times, gives the worse-case complexity of $\mathcal{O}(n^4)$. 

As $n$ increases,  
$\binom{n-1}{r}$ grows much faster than $C_i$ (recall $C_i\le (n-1)(n-2)$ and $3\le r\le n-4$), so the probability of trying an $m_i$ which does not satisfy the required conditions, and therefore requiring another run of the {\bf repeat} loop, are negligible. 
 The average number of times that the {\bf repeat} loop is executed can  be bounded therefore by a small constant.
This gives an average complexity of $\mathcal{O}(n^2)$.  \hfill $\Box$
\end{proof}
\begin{remark}
Note that the case $r=n-3$, $n\ge 9$ is not covered by Proposition~\ref{prop:alg-correctness} above. In this case it might be possible for the algorithm to make a series of choices for $m_1,\ldots,m_{n-1}$ for which no valid choice of $m_n$ exists. Algorithm~\ref{alg:no-dd-hyp} would need to be modified to include backtracking to the previous value of $i$ if the {\bf repeat} loop exhausts all the candidates for $m_i$ without success.
\end{remark}









 \begin{remark}
   %

 We have seen that if a function satisfies the conditions of Proposition~\ref{prop:card-of-monomial-intersection}, then it also satisfies the conditions of Theorem~\ref{com.scon.ddhyp}. The other way around is not true: see for example the function $f_{11}$ in Example~\ref{ex1} later on. 
 
 We note also that the conditions of Theorem~\ref{com.scon.ddhyp}, that are sufficient for a function to be in $K_{1,r,n}$, are not necessary. For example, we will see in Section~\ref{sec:experiments} the function $f_{22}$, which does not satisfy the conditions of Theorem~\ref{com.scon.ddhyp}. However, it is affine equivalent to one that does, so it has no degree-drop hyperplane. In fact, in Section~\ref{sec:experiments},
  we shall see that all the functions in 8 variables which have no degree-drop hyperplane are affine equivalent to functions which satisfy the conditions of Theorem~\ref{com.scon.ddhyp}. We do not know if that is still the case for larger numbers  of variables, so we propose the following open problem.
\end{remark}
\begin{open}\label{open-problem}
  Are all the functions which have no degree-drop hyperplane affine equivalent to functions satisfying the conditions of Theorem~\ref{com.scon.ddhyp}?
\end{open}

A positive answer to Open Problem \ref{open-problem} would mean that any function without degree-drop hyperplanes is among those that can be constructed efficiently  by using Algorithm~\ref{alg:no-dd-hyp}
followed by a composition with any affine automorphism.

\subsection{Relations between the existences of degree-drop spaces for functions and their complements}
Recall that if
$f=\sum_{i=1}^{p} m_i$ then $f^c=\sum_{i=1}^{p} \frac{x_1x_2\cdots
x_n}{m_i}$.
A question is whether the complement $f^c$ of a function
$f\in K_{1,r,n}$ belongs to $K_{1,n-r,n}$. \\
We observe that this is not true in
general. Indeed, the following $5$-variable function $f(x_1,x_2,x_3,x_4,x_5)=x_1x_2+x_3x_4$
belongs to $ K_{1,2,5}$ by Proposition~\ref{prop:card-of-monomial-intersection}, while
$f^c(x_1,x_2,x_3,x_4,x_5)=x_3x_4x_5+x_1x_2x_5$ does not belong to
$K_{1,3,5}$ because $f^c$ does not satisfy Condition \eqref{c1}.
Therefore we do not have necessarily $K_{1,r,n}^c\nsubseteq
K_{1,n-r,n}$ (where $K_{1,r,n}^c=\{f^c:\, f\in K_{1,r,n}\}$).
However we have the following result:

\begin{proposition}\label{pcomplem}
Let $f$ be a homogenous
Boolean function of algebraic degree $r$ in $n$ variables defined by
$f=\sum_{i=1}^{p}m_i$ with $m_i$ monomials. If $f$ satisfies Condition
\eqref{c2} 
and is such that $\cup_{i=1}^{p}\var(m_{i})=\{1,2,...,n\}$
then  $f^c\in K_{1,n-r,n}$.
\\
More generally, for any $k < r$, if $f$ satisfies
Condition 
\eqref{c3} and is such that, for any distinct variables $\{x_{i_1},...,x_{i_{k}}\}$, there exists a monomial $m_j$ such that $\{i_1,...,i_{k}\}\subseteq Var(m_j)$
then $f^c\in K_{k,n-r,n}$
\end{proposition}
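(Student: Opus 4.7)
My plan is to reduce the statement directly to Proposition~\ref{prop:card-of-monomial-intersection} applied to $f^c$ itself, by translating each of the two hypotheses on the monomials $m_i$ of $f$ into the corresponding hypothesis for the complementary monomials $m_i^c$ of $f^c$. Since $f^c = \sum_{i=1}^{p} m_i^c$ is homogeneous of degree $n-r$, the target is to verify that the $m_i^c$'s satisfy the generalized form of~\eqref{c1} and the pairwise-intersection bound~\eqref{c3} with parameters $(n-r,k)$ instead of $(r,k)$.

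For the first hypothesis, recall that $\var(m_i^c) = \{1,\ldots,n\} \setminus \var(m_i)$. Given any $k$ distinct indices $i_1,\ldots,i_k$, a monomial $m_j^c$ avoids all of $x_{i_1},\ldots,x_{i_k}$ if and only if $\{i_1,\ldots,i_k\} \subseteq \var(m_j)$. Thus the hypothesis that for any such set of indices there exists $j$ with $\{i_1,\ldots,i_k\}\subseteq \var(m_j)$ is exactly the generalized condition on $f^c$ required by Lemma~\ref{cond2} in the sufficient-condition form of Proposition~\ref{prop:card-of-monomial-intersection}. The first part of the statement (using $\cup_{i}\var(m_i) = \{1,\ldots,n\}$) is simply the case $k=1$: the existence of a monomial of $f$ containing $x_i$ is the same as the existence of a monomial of $f^c$ avoiding $x_i$, which is condition~\eqref{c1} for $f^c$.

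For the pairwise-intersection bound, I would use the identity
\begin{equation*}
|\var(m_i^c) \cap \var(m_j^c)| = n - |\var(m_i) \cup \var(m_j)| = n - 2r + |\var(m_i) \cap \var(m_j)|,
\end{equation*}
which follows from inclusion-exclusion together with $|\var(m_i)| = |\var(m_j)| = r$. Applying the hypothesis $|\var(m_i) \cap \var(m_j)| \le r-k-1$ gives $|\var(m_i^c) \cap \var(m_j^c)| \le n-r-k-1 = (n-r)-k-1$, which is exactly condition~\eqref{c3} for $f^c$ (whose degree is $n-r$).

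With both hypotheses verified, Proposition~\ref{prop:card-of-monomial-intersection} applied to $f^c$ yields $f^c \in K_{k,n-r,n}$. The first part of the proposition is then the specialization $k=1$ of this argument. I do not expect any serious obstacle here, the only subtlety being that one must formally translate the ``for any $k$ variables some monomial contains all of them'' hypothesis on $f$ into the ``for any $k$ variables some monomial avoids all of them'' hypothesis on $f^c$; this is straightforward set complementation, and the proof is essentially a one-line inclusion-exclusion plus an invocation of the earlier proposition.
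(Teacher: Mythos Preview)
Your proposal is correct and follows essentially the same approach as the paper: translate both hypotheses on the $m_i$ into the corresponding hypotheses on the $m_i^c$ via set complementation and inclusion--exclusion, then invoke Proposition~\ref{prop:card-of-monomial-intersection} for $f^c$, with the first part being the specialization $k=1$. The only minor quibble is your phrasing ``Lemma~\ref{cond2} in the sufficient-condition form of Proposition~\ref{prop:card-of-monomial-intersection}'': the condition you need is the hypothesis of Proposition~\ref{prop:card-of-monomial-intersection} itself (Lemma~\ref{cond2} states only the necessary direction), but you are using it correctly.
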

{\em Proof}.
The first statement is the particular case $k=1$ of the second statement, so it suffices to prove the second statement. We will denote by $\overline{S}$ the set-theoretic complement of any set $S\subseteq\{1, \ldots,n\}$:= i.e.\ $\overline{S} = \{1, \ldots,n\}\setminus S$. Note that $\var(m^c) = \overline{\var(m)}$ for any monomial $m$.
For any $i \neq j$, using the fact that $f$ satisfies Condition~\eqref{c3}, i.e.\ $|\var(m_i)\cap \var(m_j)|\leq r-k-1$,  we have:
\begin{eqnarray*}
  |\var(m^c_i)\cap \var(m^c_j)|  &=& |\overline{\var(m_i)}\cap \overline{\var(m_j)}| \\
   &=& |\overline{\var(m_i)\cup \var(m_j)}| \\
   &=& n - |\var(m_i)\cup \var(m_j)|\\
    &=& n - \left( |\var(m_i)| +  |\var(m_j)| - |\var(m_i)\cap \var(m_j)|\right)\\
    & \le & n -2r +r-k-1\\
    &=& n-r-k-1.
\end{eqnarray*}
The condition that  for any distinct variables $\{x_{i_1},...,x_{i_{k}}\}$  there exists $m_j$ such that $\{i_1,...,i_{k}\}\subseteq Var(m_j)$ means  that for any distinct variables $\{x_{i_1},...,x_{i_{k}}\}$  there exists $m_j$ such that
$\{i_1,...,i_{k}\}\cap  Var(m_j^c)=\emptyset$ which implies that $f^c$ satisfies the conditions of Proposition~\ref{prop:card-of-monomial-intersection} for  $k$, which ends the proof.
 \hfill $\Box$

\section{Special classes of functions}\label{sec:special-cases}\label{sec5}
In this section we study the existence of degree-drop spaces for several classes of functions, namely functions of degree $2, n-2, n-1$,  functions which are direct sums of monomials 
and finally general symmetric functions.
\subsection{Degrees $2$, $n-1$,$n-2$}
For  degree $r=n-1$, we have:
  \begin{lemma}\label{lem:deg_n-1} Any $n$-variable Boolean function of degree $n-1$ has $2^{n-1}-1$ degree-drop
  linear hyperplanes. Therefore $K_{1,n-1,n}=\emptyset$ and $\ds(n-1, n)=0$.
  \end{lemma}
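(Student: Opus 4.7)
The plan is to reduce a general function of degree $n-1$ to a single monomial via the complement correspondence of Proposition~\ref{prop:Hou-complement}, and then invoke the known count for a single monomial. The only monomials of degree $n-1$ in $n$ variables are the $n$ products $m_j=(x_1\cdots x_n)/x_j$, one per variable, so the homogeneous representative of $f$ can be written as $\sum_{j\in T}m_j$ for some nonempty $T\subseteq\{1,\ldots,n\}$, and its complement is the nonzero linear form $f^c=\sum_{j\in T}x_j$.

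Since any nonzero linear form can be extended to a basis of $\F_2^n$, it is affine equivalent under an invertible linear change of variables to $x_1$, so $f^c\sim_0 x_1$ in $RM(1,n)/RM(0,n)$. Applying Proposition~\ref{prop:Hou-complement} with $r=n-1$ then yields $f\sim_{n-2}(x_1)^c=x_2x_3\cdots x_n$, a single monomial of degree $n-1$.

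By Lemma~\ref{lem:bits-and-bobs}(iii), the monomial $x_2\cdots x_n$ has exactly $2^{n-1}-1$ degree-drop linear hyperplanes, namely those defined by nontrivial linear equations in the variables $x_2,\ldots,x_n$. Combining Lemma~\ref{lem:affine-invariance} with Lemma~\ref{lem:affine-vs-linear-dd-space}, the number of degree-drop linear hyperplanes is preserved under $\sim_{n-2}$: the bijection $A\mapsto\varphi(A)$ of Lemma~\ref{lem:affine-invariance} transports degree-drop affine hyperplanes to degree-drop affine hyperplanes, and Lemma~\ref{lem:affine-vs-linear-dd-space} ensures that degree-drop-ness of an affine hyperplane is determined by its underlying linear hyperplane (so the count of degree-drop linear hyperplanes coincides with half the count of degree-drop affine hyperplanes in both functions). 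Therefore $f$ itself has $2^{n-1}-1$ degree-drop linear hyperplanes. Since this number is at least $1$ for $n\ge 2$, every $f$ of degree $n-1$ admits a degree-drop hyperplane, whence $K_{1,n-1,n}=\emptyset$ and $\ds(n-1,n)=0$ by definition.

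The main subtlety is transferring the \emph{count} (not merely the existence) of \emph{linear} degree-drop hyperplanes through the equivalence $\sim_{n-2}$. The image of a linear hyperplane under an arbitrary affine automorphism need not be linear; it is Lemma~\ref{lem:affine-vs-linear-dd-space} that lets us pass freely between a linear hyperplane and its affine translates with regard to degree-drop-ness, so the two counts agree.
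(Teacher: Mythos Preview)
Your proof is correct and follows essentially the same route as the paper: pass to the complement $f^c$, note it is a nonzero linear form and hence $\sim_0 x_1$, apply Proposition~\ref{prop:Hou-complement} to get $f\sim_{n-2} x_2\cdots x_n$, and invoke Lemma~\ref{lem:bits-and-bobs}(iii). You are in fact more careful than the paper on one point: the paper simply asserts that $f$ ``has then $2^{n-1}-1$ degree-drop linear hyperplanes'' after establishing $f\sim_{n-2} x_2\cdots x_n$, whereas you explicitly justify via Lemmas~\ref{lem:affine-invariance} and~\ref{lem:affine-vs-linear-dd-space} that the count of degree-drop \emph{linear} hyperplanes (not just their existence) is an invariant of $\sim_{n-2}$.
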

  {\em Proof}. Let $f$ be a (homogeneous) Boolean function of degree $n-1$. Then $f^c$ has degree 1, so it is affine equivalent to the
  function $x_1$. As recalled in Proposition~\ref{prop:Hou-complement}, two (homogeneous) functions $f$ and $g$ of degree $r$ satisfy $f\sim_{r-1} g$ if and only if their complements satisfy $f^c\sim_{n-r-1} g^c$.
We have then $f\sim_{n-2}  x_2\cdots x_n$, and by Lemma~\ref{lem:bits-and-bobs} (iii), $f$ has then  $2^{n-1}-1$ degree-drop
   linear hyperplanes.
\hfill $\Box$

\begin{example}
 The Carlet-Feng family of functions
(which are balanced Boolean functions  with optimal algebraic
immunity, see \cite{CF}
) have degree $n-1$. By Lemma~\ref{lem:deg_n-1}, any function in this family has
$2^{n-1}-1$ degree-drop linear hyperplanes.
\end{example}


For  degree $r=2$, recall from Lemma~\ref{l2} that any quadratic function $f$ in $n$ variables is equivalent under $\sim_1$ with a function of the form $x_1x_2+\dots +x_{2p-1}x_{2p}$ for some $p\ge 1$ such that $2p\le n$. The case of $p=1$ (i.e.\ one monomial) is trivial by Lemma~\ref{lem:bits-and-bobs} (iii). A part of the next proposition has been proven in \cite[Lemma 3]{CS2}, but we shall give an alternative proof.

\begin{proposition}\label{propdsmdegree2} Let $n$ and $p$ be such that  $2\le p \le \lfloor \frac{n}{2} \rfloor$. The function  of degree $2$ in $n$ variables which is the direct sum of $p$ monomials \[
  f(x_1, \ldots, x_n) = x_1 x_2 + x_{3} x_{4} + \cdots + x_{2p-1}x_{2p}
\]
has no degree-drop space of co-dimension $p-1$ but has degree-drop spaces of co-dimension $p$; hence, \ $\ds(f)=p-1$ and $\ds(2,n)= \lfloor \frac{n}{2} \rfloor - 1$. Consequently, 
\[
K_{k,2,n} =\{f: f\sim_1 x_1 x_2 + x_{3} x_{4} + \cdots + x_{2p-1}x_{2p}\mbox{ where } k+1\le p \le \left\lfloor \frac{n}{2} \right\rfloor\}.
\]
\end{proposition}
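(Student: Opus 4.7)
The plan is to establish separately the two inequalities $\ds(f)\le p-1$ and $\ds(f)\ge p-1$, after which the formula $\ds(2,n)=\lfloor n/2\rfloor-1$ and the description of $K_{k,2,n}$ will follow immediately from Lemma~\ref{l2} and Lemma~\ref{lem:affine-invariance}. The upper bound is obtained by exhibiting an explicit degree-drop space of codimension $p$: the linear space $E$ defined by $x_1=x_3=\cdots=x_{2p-1}=0$ satisfies $f_{\mid E}=0$, so $E$ witnesses $f\notin K_{p,2,n}$ (alternatively one quotes Lemma~\ref{lem:bits-and-bobs}(iv) directly).

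The substantive step is the lower bound $\ds(f)\ge p-1$. I would argue by contradiction: suppose $f$ admits a degree-drop space of some codimension $k\le p-1$. By Theorem~\ref{carc cod k}, there exists a homogeneous quadratic $g$ with $f\sim_1 g$ in which every monomial uses at least one variable from $\{x_1,\ldots,x_k\}$. In the canonical basis, the matrix of the symmetric bilinear form $B_g(x,y)=g(x+y)+g(x)+g(y)$ then has the shape
\[
M_g=\begin{pmatrix} A & B \\ B^T & 0 \end{pmatrix},
\]
where $A$ is $k\times k$ with zero diagonal and $B$ is $k\times (n-k)$, since the vanishing of every coefficient of $x_ix_j$ with $k<i<j$ forces the lower-right block to be zero. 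The row space of $M_g$ is spanned by at most the $k$ top rows and by $k$ further vectors coming from the columns of $B$, hence $\mathrm{rank}(M_g)\le 2k$. On the other hand, an elementary direct computation shows that for $\varphi_{M,a}(x)=Mx+a$ one has $B_{f\circ\varphi_{M,a}}(x,y)=B_f(Mx,My)$, and adding a term of algebraic degree at most one to a quadratic leaves the associated bilinear form untouched; therefore the rank of this form is $\sim_1$-invariant. Since $M_f$ is block-diagonal with $p$ hyperbolic $2\times 2$ blocks, $\mathrm{rank}(M_f)=2p$, contradicting $2p\le 2k$.

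Combining both bounds yields $\ds(f)=p-1$ (the boundary case $p=1$ being already covered by Lemma~\ref{lem:bits-and-bobs}(iii)). By Lemma~\ref{l2}, every function of algebraic degree $2$ in $n$ variables is $\sim_1$-equivalent to some $x_1x_2+\cdots+x_{2p-1}x_{2p}$ with $1\le p\le \lfloor n/2\rfloor$, and Lemma~\ref{lem:affine-invariance} ensures that $\ds$ is $\sim_1$-invariant; maximising over $p$ gives $\ds(2,n)=\lfloor n/2\rfloor-1$, and the condition $f\in K_{k,2,n}$ becomes $p-1\ge k$, i.e.\ $k+1\le p\le \lfloor n/2\rfloor$, which is precisely the announced characterisation. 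The delicate point is the rank bound on $M_g$: one has to remember that in characteristic $2$ the matrix has zero diagonal, and that the $(n-k)\times(n-k)$ zero block in the lower right is exactly what links the combinatorial condition of Theorem~\ref{carc cod k} to the symplectic rank $2p$ of $f$.
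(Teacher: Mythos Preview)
Your argument is correct, and it takes a genuinely different route from the paper's.

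The paper proves the lower bound $\ds(f)\ge p-1$ inductively, via an auxiliary lemma (Lemma~\ref{lemdsmdegree2}) showing that restricting $g_{p,n}=x_1x_2+\cdots+x_{2p-1}x_{2p}$ to any hyperplane yields, up to $\sim_1$, either $g_{p,n-1}$ or $g_{p-1,n-1}$. One then writes a codimension-$(p-1)$ space as an intersection of $p-1$ hyperplanes and tracks that the parameter can drop by at most one at each step, so remains $\ge 1$. Your proof bypasses this induction entirely: you invoke Theorem~\ref{carc cod k} to force a $\sim_1$-equivalent $g$ whose monomials all touch $\{x_1,\ldots,x_k\}$, observe that the associated symplectic matrix then has a zero $(n-k)\times(n-k)$ lower-right block and hence rank $\le 2k$, and conclude via the $\sim_1$-invariance of the symplectic rank (which is $2p$ for $f$). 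This is shorter and more conceptual---it identifies the obstruction directly as the classical rank invariant of the quadratic form---whereas the paper's approach is more hands-on and has the advantage of not needing Theorem~\ref{carc cod k} (indeed, Lemma~\ref{lemdsmdegree2} is later reused as the base of the general direct-sum argument in Theorem~\ref{thm:direct-sum}). Both are perfectly valid; your rank bound $\mathrm{rank}\begin{pmatrix}A&B\\B^T&0\end{pmatrix}\le 2k$ is immediate from writing the matrix as a sum of two matrices each of rank $\le k$, and the invariance of the rank under $\sim_1$ is exactly the content underlying Lemma~\ref{l2}.
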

The proof requires the following lemma:
\begin{lemma}\label{lemdsmdegree2} Denote by $g_{p,n}$ the function of degree 2 in $n$ variables which is the following direct sum of $p$ monomials: $g_{p,n}(x_1, \ldots, x_n) = x_1 x_2 + x_{3} x_{4} + \cdots + x_{2p-1} x_{2p}$ with $2p\le n$. Let $f$ be a function of degree 2 in $n$ variables and let $H$ be a hyperplane in $\F_2^n$. If $f\sim_1 g_{p,n}$ then $f_{\mid H}\sim_1 g_{p-1,n-1}$ or $f_{\mid H}\sim_1 g_{p,n-1}$ (the latter only being possible if $2p\le n-1$).
    \end{lemma}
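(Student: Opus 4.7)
My plan is to reduce the statement, via affine invariance, to a rank computation for the alternating bilinear form associated with the standard model $g_{p,n}$. By hypothesis, $f = g_{p,n}\circ \varphi + h$ for some affine automorphism $\varphi$ of $\F_2^n$ and some $h$ with $\deg(h)\le 1$. Lemma~\ref{lem:affine-invariance} gives $f_{\mid H}\sim_1 g_{p,n\mid \varphi(H)}$, and Lemma~\ref{lem:affine-vs-linear-dd-space} lets us replace $\varphi(H)$ by its underlying linear subspace. So the task reduces to showing that for every linear hyperplane $L$ of $\F_2^n$, $g_{p,n\mid L}\sim_1 g_{p-1,n-1}$ or $g_{p,n\mid L}\sim_1 g_{p,n-1}$ (the latter requiring $2p\le n-1$).

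The restriction $g_{p,n\mid L}$ is a Boolean function of degree at most $2$ in $n-1$ variables, so by Lemma~\ref{l2} it is $\sim_1$-equivalent to $g_{q,n-1}$ for a unique integer $q$ with $2q\le n-1$. To pin down $q$, I would invoke the standard $\sim_1$-invariant for quadratic Boolean functions, namely the rank of the alternating bilinear form $B_{f_2}(x,y)=f_2(x+y)+f_2(x)+f_2(y)$ attached to the homogeneous degree-$2$ part $f_2$ of $f$; this rank equals $2q$ for $g_{q,m}$. Invariance under $\sim_1$ is routine: a linear change of variables by $M$ conjugates the matrix of $B_{f_2}$ by $M$ (preserving rank), and adding a polynomial of degree $\le 1$ does not affect $f_2$. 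Hence it suffices to show that the restriction of $B:=B_{g_{p,n}}$ to $L\times L$ has rank $2p$ or $2p-2$.

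Now $B$ is the standard symplectic form on $\F_2^n$, of rank $2p$ and with radical $R := \operatorname{span}(e_{2p+1},\ldots,e_n)$. Write $L=\ker\phi$ for a nonzero linear functional $\phi$. The radical of $B|_L$ equals $L\cap L^{\perp_B}$, where $L^{\perp_B}=\{x\in\F_2^n: B(x,y)=0 \text{ for all } y\in L\}$. Two cases arise. If $\phi$ vanishes on $R$, equivalently $L\supset R$, then $\phi$ lies in the image of $B\colon V\to V^*$, so $\phi = B(v,\cdot)$ for some $v$ (unique modulo $R$) and $L^{\perp_B}=R+\langle v\rangle$; alternation of $B$ forces $\phi(v)=B(v,v)=0$, hence $v\in L$, the radical has dimension $n-2p+1$, and $\mathrm{rank}(B|_L)=2p-2$, yielding $f_{\mid H}\sim_1 g_{p-1,n-1}$. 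Otherwise $L\not\supset R$, in which case $L^{\perp_B}=R$ and $L\cap R$ has dimension $\dim R - 1$; this forces $\dim R\ge 1$, i.e.\ $2p\le n-1$, and $\mathrm{rank}(B|_L)=2p$, yielding $f_{\mid H}\sim_1 g_{p,n-1}$.

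The main obstacle I expect is the linear-algebraic step in the last paragraph: one must check that $v\in L$ in the first case (so the radical truly grows by one dimension, making the rank drop by exactly $2$ rather than by $1$, which is impossible anyway for alternating forms) and justify carefully that $L^{\perp_B}=R$ in the second case. Once the $\sim_1$-invariance of the bilinear-form rank is made explicit and this rank-drop dichotomy is established, both alternatives in the lemma statement follow immediately.
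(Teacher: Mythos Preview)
Your proof is correct and takes a genuinely different route from the paper's. The paper argues by explicit substitution: after reducing via Lemma~\ref{lem:affine-invariance} to restricting $g_{p,n}$ itself to a hyperplane, it splits into two cases according to whether the hyperplane equation involves a variable outside $\{x_1,\ldots,x_{2p}\}$ (giving $g_{p,n-1}$ trivially) or not, and in the latter case writes out the substituted polynomial and exhibits an explicit affine change of coordinates sending it to $g_{p-1,n-1}$. Your argument instead identifies the $\sim_1$-class with the rank of the associated alternating bilinear form and computes the radical of the restricted form via $L\cap L^{\perp_B}$, obtaining the dichotomy $2p$ versus $2p-2$ from whether the defining functional $\phi$ annihilates the radical $R$. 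Your approach is cleaner and invokes only standard symplectic linear algebra; the paper's has the advantage of being fully self-contained and produces the change of variables explicitly. One minor point: Lemma~\ref{lem:affine-vs-linear-dd-space} as stated only concerns the degree, not the full $\sim_1$-class, but its proof (the identity $f_{\mid A}\circ\varphi_{I,v}=f_{\mid E}+(D_vf)_{\mid E}$ with $\deg(D_vf)<2$) immediately gives the $\sim_1$-equivalence you need.
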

{\em Proof.}  Let $\varphi$ be the affine automorphism such that $f = g_{p,n} \circ \varphi +h$ for some function $h$ of degree at most one. By Lemma~\ref{lem:affine-invariance} we have that $f_{\mid H}\sim_1 (g_{p,n})_{\mid \varphi(H)}$. If the equation of $\varphi(H)$ contains any of the variables $x_{2p+1}, \ldots, x_n$, then, as in the proof of Lemma~\ref{lem:bits-and-bobs} (i), the ANF of $(g_{p,n})_{\mid \varphi(H)}$ is the same as the ANF of $g_{p,n}$, i.e.\ $(g_{p,n})_{\mid \varphi(H)}\sim_1 g_{p,n-1}$. Assume now that the equation of  $\varphi(H)$ contains only variables from the set $\{x_1, \ldots, x_{2p}\}$.
Without loss of generality we can assume that the equation is
$x_1=c_2x_2+c_3x_3+\cdots c_{2p}x_{2p}$ with $c_i\in \F_2$. Substituting $x_1$ in $g_{p,n}$ we obtain for the ANF of $(g_{p,n})_{\mid \varphi(H)}$:
\begin{eqnarray*}
 &&(c_2x_2+c_3x_3+\cdots c_{2p}x_{2p})x_2 + x_{3} x_{4} + \cdots + x_{2(p-1)} x_{2p} \\
  && = (x_3+c_4x_2)(x_4+c_3x_2)+\cdots +(x_{2p-1}+c_{2p}x_2)(x_{2p}+c_{2p-1}x_2) +cx_2
\end{eqnarray*}
for some constant $c\in \F_2$. By making the affine transformation $x_1 \leftarrow x_3+c_4x_2$,
$x_2 \leftarrow x_4+c_3x_2$,....,$x_{2p-2} \leftarrow x_{2p}+c_{2p-1}x_2$, $x_{2p-1}\leftarrow x_2$ this function is equivalent under $\sim_1$ to $g_{p-1,n-1}$.
\hfill$\Box$\\

Let us now prove Proposition \ref{propdsmdegree2}.\\
{\em Proof.} The fact that $f$ has a degree-drop space of co-dimension $p$ follows from Lemma~\ref{lem:bits-and-bobs}  (iv). Let us then prove that $f$ has no degree-drop space $A$ of co-dimension $p-1$.
 We can write $A$ as an intersection of $p-1$ hyperplanes, $A = \cap_{i=1}^{p-1} H_i$. Define $f_0=f$
and $f_i = (f_{i-1})_{\mid H_{i}}$ for $i = 1, \ldots, p-1$.
We have $f_{p-1} = f_{\mid A}$. Using the notation and results of Lemma~\ref{lemdsmdegree2}, it follows by induction that for all $i = 0, \ldots, p-1$ we have $f_i \sim_1 g_{t_i, n-i}$ where $t_0=p$ and $t_i\in \{t_{i-1}, t_{i-1}-1\}$. Therefore $t_{p-1}\ge 1$, which implies that $f_{p-1}$, and therefore $f_{\mid A}$,  has degree 2, so $A$ is not a degree-drop space of $f$.
\hfill$\Box$\\

 For  degree $r=n-2$, we have:
\begin{proposition}\label{casrnmo2}Let $n\geq 4$ be an integer. We have:
\begin{itemize}
\item
 if $n$ is odd, then $K_{1,n-2,n}=\emptyset$ and therefore $\ds(n-2,n) = 0$,
 \item if $n$ is even, then
 $$K_{1,n-2,n}=\{f: \textrm{
 $f\sim_{n-3} g$ where $g^c(x_1, \ldots, x_n)\sim_{1} x_1x_2+\dots +x_{n-1}x_{n}$ }\}$$  
 and $K_{2,n-2,n}=\emptyset$. Therefore $\ds(n-2,n) = 1$.
\end{itemize}
\end{proposition}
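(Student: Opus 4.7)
The plan is to translate every question about degree-drop spaces of a homogeneous $f$ of degree $n-2$ into a question about the $\sim_1$-class of the complement $f^c$, which is a homogeneous polynomial of degree exactly $2$, and then to invoke Lemma~\ref{l2} together with a symplectic-rank argument.

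For the hyperplane case, Theorem~\ref{theo2} applied with $r=n-2$ says that $f$ has a degree-drop hyperplane if and only if $f\sim_{n-3} x_1 g(x_2,\ldots,x_n)$ for some homogeneous $g$ of degree $n-3$, and Proposition~\ref{prop:Hou-complement} turns this into $f^c\sim_1 (x_1g)^c$. If $g=\sum_j m_j$ with the $m_j$ monomials in $x_2,\ldots,x_n$ of degree $n-3$, then $(x_1 g)^c=\sum_j \frac{x_2\cdots x_n}{m_j}$, which is an arbitrary homogeneous quadratic in $x_2,\ldots,x_n$ (and every such quadratic arises this way). Since variable permutations belong to $\sim_1$, the existence of a degree-drop hyperplane of $f$ is then equivalent to the $\sim_1$-class of $f^c$ containing some homogeneous quadratic that omits at least one variable.

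By Lemma~\ref{l2} there is a unique $t$, necessarily with $1\le t\le \lfloor n/2\rfloor$ (since $f^c$ has degree exactly $2$), such that $f^c\sim_1 q_t:=x_1x_2+\cdots+x_{2t-1}x_{2t}$. The rank of the symplectic bilinear form $B_q(x,y)=q(x+y)+q(x)+q(y)$ is a $\sim_1$-invariant of homogeneous quadratics (an invertible linear change $q\circ M$ replaces its matrix $A$ by $M^T A M$, and adding any polynomial of degree $\le 1$ leaves the quadratic part unchanged), and for $q_t$ this rank equals $2t$. A homogeneous quadratic omitting a variable $x_j$ has the $j$-th row and column of its $B$-matrix equal to zero, so its rank is at most $n-1$. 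Hence $f^c$ is $\sim_1$-equivalent to a homogeneous quadratic omitting a variable if and only if $2t\le n-1$, i.e.\ $t<n/2$. For odd $n$ this always holds, giving $K_{1,n-2,n}=\emptyset$; for even $n$ it fails exactly when $t=n/2$, so $f\in K_{1,n-2,n}$ iff $f^c\sim_1 x_1x_2+\cdots+x_{n-1}x_n$, which, after using Proposition~\ref{prop:Hou-complement} once more to translate back, is exactly the claimed description of $K_{1,n-2,n}$.

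Finally, for $K_{2,n-2,n}$, I would apply Theorem~\ref{carc cod k} with $k=2$: $f$ has a degree-drop space of co-dimension $2$ iff $f\sim_{n-3} g$ with every monomial of $g$ containing $x_1$ or $x_2$. A monomial $m$ of degree $n-2$ has this property iff $m^c\ne x_1x_2$, so via Proposition~\ref{prop:Hou-complement} the condition becomes $f^c\sim_1 h$ for some homogeneous quadratic $h$ that does not contain the monomial $x_1x_2$. For every $t\ge 1$ and $n\ge 4$, the transposition $x_2\leftrightarrow x_3$ applied to $q_t$ produces such an $h$ (namely $x_1x_3$ when $t=1$, and $x_1x_3+x_2x_4+x_5x_6+\cdots+x_{2t-1}x_{2t}$ when $t\ge 2$), so every $f$ of degree $n-2$ admits a degree-drop space of co-dimension $2$, which proves $K_{2,n-2,n}=\emptyset$ and hence $\ds(n-2,n)=1$ for even $n$ (and $0$ for odd $n$). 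The main technical point requiring care is the verification of the two complement-to-quadratic translations above, together with the $\sim_1$-invariance of the symplectic rank; once these are in place, Lemma~\ref{l2} does all the heavy lifting.
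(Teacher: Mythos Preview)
Your proof is correct and shares the same starting move as the paper---reducing to the quadratic $f^c$ via Proposition~\ref{prop:Hou-complement} and Lemma~\ref{l2}---but then diverges in execution. The paper fixes the canonical representative $g$ with $g^c=q_t=x_1x_2+\cdots+x_{2t-1}x_{2t}$ and works directly with $g=q_t^c$: when $2t<n$ every monomial of $g$ contains $x_n$, so Lemma~\ref{lemma:c1} gives a degree-drop hyperplane; when $2t=n$ the monomials of $g$ pairwise share exactly $r-2$ variables, so Proposition~\ref{prop:card-of-monomial-intersection} rules out degree-drop hyperplanes; and for co-dimension~$2$ the paper simply notes that every monomial of $q_{n/2}^c$ contains $x_1$ or $x_n$ and invokes Lemma~\ref{cond2}. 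You instead push the entire question through the complement first (using Theorems~\ref{theo2} and~\ref{carc cod k}), turning ``degree-drop hyperplane'' into ``$f^c\sim_1$ a quadratic omitting a variable'' and ``degree-drop space of co-dimension~$2$'' into ``$f^c\sim_1$ a quadratic missing the monomial $x_1x_2$'', and then settle both questions purely on the quadratic side with the symplectic-rank invariant and an explicit transposition.

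Your route has the advantage of never touching Proposition~\ref{prop:card-of-monomial-intersection} or the combinatorics of $q_t^c$; everything reduces to standard facts about quadratic forms. The paper's route, conversely, showcases its own toolkit (Lemmas~\ref{lemma:c1}, \ref{cond2} and Proposition~\ref{prop:card-of-monomial-intersection}) and is slightly shorter because it needs no invariant argument---Lemma~\ref{l2} already pins down the class, and the explicit shape of $q_t^c$ makes the verifications immediate. One minor point: your final ``translate back'' via Proposition~\ref{prop:Hou-complement} is not really needed; once you have $f\in K_{1,n-2,n}\iff f^c\sim_1 q_{n/2}$, taking $g=f$ already matches the stated description.
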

{\em Proof}.
From the results recalled in Lemma~\ref{l2} and  Proposition~\ref{prop:Hou-complement},
any polynomial $f$ of degree $r=n-2$ is such that
$f \sim_{n-3} g$ and $g^c(x_1, \ldots, x_n) = x_1x_2+\dots +x_{2t-1}x_{2t}$ with $t\geq 1$ and $2t\le n$.
If $2t<n$ then all the monomials of $g$ contain $x_n$, so by Lemma~\ref{lemma:c1} $f$ is not in $K_{1,n-2,n}$.
If $2t=n$ then there is no variable that appears in all the monomials of $g$ (since $n\ge 4$) and
any two monomials in $g$ have exactly $r-2$ variables in common. According to Proposition~\ref{prop:card-of-monomial-intersection}, we deduce
$f \in  K_{1,n-2,n}$. One can check that any monomial of $g$ contains either $x_1$ or $x_n$, so by Lemma~\ref{cond2} we have $K_{2,n-2,n}= \emptyset$.
 \hfill $\Box$\\

 \subsection{ Direct sums of monomials}
Before giving the general result  regarding the direct sum of monomial functions, we shall need the following definition and lemmas.

Recall that the rank, denoted by ${\rm rank}(f)$, of a function $f$ in $n$ variables is the minimum integer $n_1$ such that there is a function $g$ which depends on $n_1$ variables and $g\sim f$. This notion can be extended to the quotient
$RM(r,n)/ RM(r-1,n)$, by defining the rank of an element $f\in RM(r,n)/ RM(r-1,n)$, denoted by $\rank(f)$,
    as the minimum integer $n_1$ such that there exists $g\in RM(r,n)/ RM(r-1,n)$ such that $f\sim_{r-1} g$ and $g$ depends on $n_1$ variables (that is, is independent of $n-n_1$ variables, which means that $D_{e_i}g$ is identically zero for $n-n_1$ vectors $e_i$ of the canonical basis
    ).
Obviously, $\rank$ is invariant under $\sim_{r-1} $ (due to the transitivity of $\sim_{r-1} $). Note also that in general, for a function $f$ of degree $r$, ${\rm rank}(f)$ can be different from  $\rank(f)$. Indeed, the set of all $g$ such that $f\sim g$ is included in the set of
all $g$ such that $f\sim_{r-1} g$, implying that $\rank(f)\leq {\rm rank}(f)$, and the inequality can be strict; for instance, for $n=5$ and $f(x)=x_1x_2+x_3x_4+x_5$, we have ${\rm rank}_{1}(f)=4$ and ${\rm rank}(f)=5$.
\begin{lemma}\label{lem:rank-direct-sum}
    Let $f$ be a homogeneous function of degree $r$ in $n$ variables with $2\le r \le n$. If $f$ has the property that for any two distinct monomials $m, m'$ of $f$  we have $|\var(m) \cap \var(m')|\le r-2$,  then $\rank(f)= |\var(f)|$. In particular, if $f$ is a direct sum of $p$  degree $r$ monomials, then $\rank(f)= pr$.
\end{lemma}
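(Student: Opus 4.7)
The bound $\rank(f)\le |\var(f)|$ is immediate, since $f$ itself depends only on the variables indexed by $\var(f)$. For the reverse inequality, the plan is to reformulate $\rank$ in terms of derivatives and then exploit the intersection hypothesis on the monomials of $f$. Concretely, for a homogeneous $f$ of degree $r$, I will show
\[
\rank(f)\;=\;n-\dim V_0(f),\qquad V_0(f):=\{v\in\F_2^n:D_v f=0\}.
\]
This follows by observing that $f\sim_{r-1}g$ implies the degree-$r$ homogeneous parts satisfy $g^{(r)}=f\circ L$ for some invertible linear $L$ (the affine translation only contributes terms of degree $<r$), and that a homogeneous degree-$r$ polynomial $g^{(r)}$ is independent of $n-n_1$ variables (after choosing a suitable basis) if and only if $V_0(f)$ contains a subspace of dimension $n-n_1$, using the identity $D_w(f\circ L)=(D_{Lw}f)\circ L$.

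Next I would compute the top-degree component of $D_v f$. Writing $f=\sum_{i=1}^p m_i$ with $V_i=\var(m_i)$, expanding $m_i(x+v)=\prod_{k\in V_i}(x_k+v_k)$ and collecting terms of degree $r-1$ yields
\[
(D_v f)^{(r-1)}\;=\;\sum_{i=1}^p\sum_{j\in V_i}v_j\prod_{k\in V_i\setminus\{j\}}x_k.
\]
The crucial step is to note that, under the hypothesis $|V_i\cap V_{i'}|\le r-2$ for $i\ne i'$, the $(r-1)$-subsets $V_i\setminus\{j\}$ are pairwise distinct as $(i,j)$ ranges over pairs with $j\in V_i$: if $V_i\setminus\{j\}=V_{i'}\setminus\{j'\}$ with $i\ne i'$, then this common $(r-1)$-set would be contained in $V_i\cap V_{i'}$, contradicting the hypothesis. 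Hence the monomials above are pairwise distinct, and the vanishing of $(D_v f)^{(r-1)}$ forces $v_j=0$ for every $j\in\bigcup_i V_i=\var(f)$.

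Combining both steps, $V_0(f)$ is exactly the set of vectors supported outside $\var(f)$, so $\dim V_0(f)=n-|\var(f)|$ and therefore $\rank(f)=|\var(f)|$. The direct-sum case is then immediate: if the $m_i$ are $p$ pairwise variable-disjoint monomials of degree $r\ge 2$, then $|V_i\cap V_{i'}|=0\le r-2$ and $|\var(f)|=pr$, giving $\rank(f)=pr$.

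The main obstacle I anticipate is the careful justification of the derivative characterization of $\rank$, in particular the translation between the affine equivalence $\sim_{r-1}$ on $RM(r,n)/RM(r-1,n)$ and the linear equivalence of the degree-$r$ homogeneous parts; once this is set up, the counting via the intersection hypothesis is a short formal computation.
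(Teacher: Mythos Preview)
Your core computation---showing that $(D_v f)^{(r-1)}=\sum_{i}\sum_{j\in V_i}v_j\prod_{k\in V_i\setminus\{j\}}x_k$ and that the intersection hypothesis forces all these degree-$(r-1)$ monomials to be distinct---is correct and is exactly what the paper does. The paper then invokes \cite[Corollary~2]{SalMan17} (no fast points $\Leftrightarrow$ $\rank(f)=n$), applied to $f$ viewed as a function in $|\var(f)|$ variables, to conclude.

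The gap in your write-up is the general claim $\rank(f)=n-\dim V_0(f)$ with $V_0(f)=\{v:D_vf=0\}$. This is false. Take $f=x_1x_2+x_1x_3+x_2x_3$ (homogeneous, degree $2$, $n=3$): one checks $D_af\neq 0$ for every $a\neq 0$, so $V_0(f)=\{0\}$; yet the linear change $x_1\mapsto z_1+z_3$, $x_2\mapsto z_2+z_3$, $x_3\mapsto z_3$ gives $f=z_1z_2+z_3$, whose degree-$2$ part is $z_1z_2$, so $\rank(f)=2\neq 3$. The source of the error is your assertion ``$g^{(r)}=f\circ L$'': over $\F_2$ the reduction $x_i^2=x_i$ means that $f\circ L$ is \emph{not} homogeneous in general, so only $g^{(r)}=(f\circ L)^{(r)}$ holds, and then the identity $D_w(f\circ L)=(D_{Lw}f)\circ L$ no longer controls $D_w\bigl((f\circ L)^{(r)}\bigr)$.

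The fix is exactly what your computation already sets up: replace $V_0(f)$ by the fast-point space $\{v:(D_vf)^{(r-1)}=0\}$. For homogeneous $f$ one has $\rank(f)=n-\dim\{v:(D_vf)^{(r-1)}=0\}$ (this is the content of \cite[Corollary~2]{SalMan17}, used in the paper), and your monomial argument shows that under the hypothesis this space is precisely $\{v:\operatorname{supp}(v)\cap\var(f)=\emptyset\}$, yielding $\rank(f)=|\var(f)|$.
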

{\em Proof.}
It was proven in~\cite[Corollary~2]{SalMan17} that a function $f$ of degree $r$ in $n$ variables has no fast points if and only if $\rank(f)=n$. So it suffices to show that our function $f$, as a function in $n=|\var(f)|$ variables, has no fast points. Let $a=(a_1, \ldots, a_{n})\neq \mathbf{0}$  and consider the derivative $D_a f$. The derivative of a monomial $x_{i_1}\cdots x_{i_r}$ of $f$ in direction $a$ equals
\[
 \prod_{j=1}^{r} (x_{i_j} + a_{i_j}) - \prod_{j=1}^{r} x_{i_j} = \sum_{j=1}^{r}a_{i_j}  x_{i_1}\cdots x_{i_{j-1}}x_{i_{j+1}}\cdots x_{i_r}+h(x)
\]
where $h(x)$ is a polynomial of degree at most $r-2$. Note that the monomials of degree $r-1$ in this derivative (if there are any) involve $r-1$ of the variables from the original monomial. The assumption that for any two monomials $m, m'$ of $f$ we have $|\var(m) \cap \var(m')|\le r-2$ implies that none of the monomials of degree $r-1$ in the derivative of $m$ can be cancelled by monomials in the derivative of $m'$. Since the vector $a$ is non-zero, it has at least one non-zero component, so there is at least one monomial $m$ in $f$ whose derivative has degree $r-1$.
 Therefore  $\deg(D_a f) = r-1$ regardless of the value of $a$, so there are no fast points.
\hfill$\Box$\\

The behavior of $\rank(f)$ when $f$ is restricted to an affine space is examined in the following:
\begin{lemma}\label{lemdsmrk}Let $f$ be a degree $r$ Boolean  function in $n$ variables and let $A$ be an affine subspace of $\F_2^n$ of co-dimension $k$. Then $\rank(f_{\mid A})\leq \min(n-k, \rank(f))$.
\end{lemma}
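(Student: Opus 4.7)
The lemma asserts two inequalities, which I would prove separately.

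The bound $\rank(f_{\mid A})\le n-k$ is essentially a definitional matter. After fixing any basis of the underlying vector space of $A$, the restriction $f_{\mid A}$ is a function in $n-k$ variables; since $\rank$ is defined as the minimum number of variables needed by any representative of the $\sim_{r-1}$-equivalence class, it cannot exceed $n-k$.

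For the bound $\rank(f_{\mid A})\le \rank(f)$, set $s=\rank(f)$ and pick $g\sim_{r-1} f$ so that $g$ depends only on $s$ variables, which after a permutation of coordinates we may take to be $x_1,\dots,x_s$; write $g=g'(x_1,\dots,x_s)$. Writing the equivalence as $f = g\circ\psi + h$ with $\psi$ an affine automorphism and $\deg(h)<r$, Lemma~\ref{lem:affine-invariance} yields
\[
f_{\mid A}\ \sim_{r-1}\ g_{\mid B},\qquad B:=\psi(A),
\]
and $B$ has co-dimension $k$ like $A$. Since $\rank$ is $\sim_{r-1}$-invariant, it remains to show $\rank(g_{\mid B})\le s$.

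After Gaussian elimination, $B$ is defined by $k$ equations of the form $x_{i_1}=a_{i_1}(y),\dots,x_{i_k}=a_{i_k}(y)$, where $y=(y_1,\dots,y_{n-k})$ are the remaining variables and each $a_{i_j}$ is affine. Substituting into $g$, whose only variables are $x_1,\dots,x_s$, gives
\[
g_{\mid B}(y)\ =\ g'\bigl(L_1(y),\dots,L_s(y)\bigr),
\]
where for $i\in\{1,\dots,s\}$, $L_i(y)$ is either one of the $y_j$ (when $i\notin\{i_1,\dots,i_k\}$) or the affine function $a_i(y)$ (when $i\in\{i_1,\dots,i_k\}$). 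The map $L=(L_1,\dots,L_s)\colon \F_2^{n-k}\to\F_2^s$ is affine, and the rank of its linear part is at most $s$. Hence there is an invertible affine change of coordinates $y=Pz+c$ on $\F_2^{n-k}$ such that the composition $L\circ(Pz+c)$ depends only on $z_1,\dots,z_{\rank(L)}$; in these new coordinates, $g_{\mid B}$ depends on at most $s$ variables. Because different choices of basis for $B$ give affinely equivalent expressions of the restriction (as explained at the end of Section~\ref{2}), this change of coordinates does not affect the rank, so $\rank(g_{\mid B})\le s$, as desired.

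The main delicate step is the second bound: while the first follows directly from the definition, the second requires combining $\sim_{r-1}$-invariance (via Lemma~\ref{lem:affine-invariance}) with the freedom to choose a basis of $B$, so that the affine map $L$ encoding the substitution can be diagonalized down to its rank. Once this is observed, the argument is purely linear-algebraic.
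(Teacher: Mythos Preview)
Your proof is correct and follows essentially the same approach as the paper: both invoke Lemma~\ref{lem:affine-invariance} to reduce to showing that restricting a function $g$ depending on only $s$ variables to an affine space $B$ cannot push $\rank$ above $s$. The only cosmetic difference is in this last step, where the paper argues equation-by-equation on the defining system of $B$ (equations containing a variable outside $\var(g)$ are discarded; those entirely inside $\var(g)$ cannot introduce new variables), while you package the substitution as a single affine map $L\colon\F_2^{n-k}\to\F_2^s$ and diagonalize it---arguably cleaner, but the same idea.
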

{\em Proof.}
Note that $\rank(f_{\mid A})\leq n-k$ is obvious as $f_{\mid A}$ is a function in $n-k$ variables.
Denote $n_1=\rank(f)$ and let $g$ be a function in $n_1$ 
variables such that $f\sim_{r-1} g$. If  $\varphi$ is an affine
automorphism such that $g=f\circ \varphi +h$ for some $h$ of 
degree at most $r-1$, then, by Lemma~\ref{lem:affine-invariance}, $f_{\mid A}\sim_{r-1} g_{\mid \varphi^{-1}(A)}$. 
The restriction of a function $g$ in  $n_1$ variables to any 
affine space $B$ has $\rank$ at most $n_1$. Indeed, assume 
$g=m_1+\dots+m_p +t$, where the $m_i$'s are monomials of degree 
$r$ and $\deg(t)<r$.  Let $B$ be defined by $k$ linearly 
independent equations. If an equation of $B$ contains a 
variable which is not in $g_1$, then this equation does not 
affect $g_1$ if we substitute the expression of this variable 
(from that equation) in $g$. Then, the only equations that can 
affect $g_1$ are those whose variables  are all included in 
$\var(g_1)$ and the number of variables in $g_1$ will then not
increase when restricting $g$ to $B$ that is,  
$\rank
(g_{\mid B})\leq n_1$. Therefore
 $\rank(f_{\mid A})=\rank(g_{\mid \varphi^{-1}(A)})\leq n_1 =\rank (f)$.
\hfill$\Box$\\

We can now state our result on direct sums of monomials, which is one of our main results, generalizing Proposition \ref{propdsmdegree2} to all degree $r\geq 2$.
\begin{theorem} \label{thm:direct-sum}
Let $2\le r <n$ and $2\le p \le \lfloor \frac{n}{r} \rfloor$. The function  of degree $r$ in $n$ variables which is the direct sum of $p$ monomials
\[
  f(x_1, \ldots, x_n) = x_1 x_2\cdots x_r + x_{r+1} x_{r+2} \cdots x_{2r} + \cdots + x_{(p-1)r+1} x_{(p-1)r+2} \cdots x_{pr}
\]
has no degree-drop space of co-dimension $p-1$ but has degree-drop spaces of co-dimension $p$, i.e.\ $\ds(f)=p-1$. Consequently $\ds(r,n)\ge \lfloor \frac{n-r}{r} \rfloor$.
\end{theorem}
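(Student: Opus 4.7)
The plan is to establish the two inequalities $\ds(f) \le p-1$ and $\ds(f) \ge p-1$ separately. The upper bound is immediate from Lemma~\ref{lem:bits-and-bobs}(iv): since $f$ has exactly $p$ monomials, picking one variable from each and setting all of them to zero produces a degree-drop space of codimension at most $p$, so $\ds(f) \le p-1$. The essential content is the matching lower bound, i.e.\ the assertion that no affine space $A$ of codimension $p-1$ is a degree-drop space for~$f$.

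To prove the lower bound I would first reduce to linear $A$ via Lemma~\ref{lem:affine-vs-linear-dd-space}, and after Gaussian elimination write its defining equations as $y_k = L_k(z)$, $k = 1, \ldots, p-1$, with $L_k$ linear in the remaining coordinates~$z$. Pivots outside $\var(f)$ do not affect $f$, so I may choose $y_1,\ldots,y_{p-1} \in \var(f)$. The sets $\var(m_j)$ partition $\var(f)$, so each $y_k$ belongs to exactly one $m_j$; setting $s_j = |\var(m_j) \cap \{y_1,\ldots,y_{p-1}\}|$ gives $\sum_j s_j = p-1$, whence $J_0 := \{j : s_j = 0\}$ is non-empty. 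Let $J_+ := \{1,\ldots,p\} \setminus J_0$, $J_r := \{j \in J_+ : s_j = r\}$, and $V_j := \var(m_j) \setminus \{y_1,\ldots,y_{p-1}\}$; a direct expansion shows that the degree-$r$ part $\tilde m_j^{(r)}$ of $(m_j)_{|A}$ has $\prod_{x \in V_j} x$ as a common factor.

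Assuming for contradiction that $\deg(f_{|A}) < r$, equating the degree-$r$ part of $f_{|A}$ to zero gives the polynomial identity $G := \sum_{j \in J_0} m_j = \sum_{j \in J_+} \tilde m_j^{(r)}$. The key move is to restrict this identity to the monomials supported on $U := \bigcup_{j \in J_0} \var(m_j)$, which has size $|J_0|\, r$. Because the $\var(m_j)$ are pairwise disjoint, $V_j \cap U = \emptyset$ for every $j \in J_+$, so for $j \in J_+ \setminus J_r$ (where $V_j$ is non-empty) no monomial of $\tilde m_j^{(r)}$ survives the restriction. Only the terms with $j \in J_r$ contribute, each equal to the degree-$r$ part of a product of $r$ linear forms in $U$-coordinates; such a product lies in the subspace spanned by its $r$ linear factors, and the sum over $j \in J_r$ therefore lies in the subspace spanned by all $r\,|J_r|$ of them, hence has rank at most $r\,|J_r|$. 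Lemma~\ref{lem:rank-direct-sum} applied to the direct sum $G$ gives $\rank(G) = |J_0|\, r$, so $|J_0| \le |J_r|$. On the other hand, $\sum_{j \in J_+} s_j = p-1$ together with $s_j = r$ for $j \in J_r$ and $s_j \ge 1$ for the remaining $j \in J_+$ yields $|J_0| \ge 1 + (r-1)|J_r|$; combining these two bounds gives $1 \le (2-r)|J_r|$, which is impossible for $r \ge 2$.

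This contradiction proves $\deg(f_{|A}) = r$ and hence $\ds(f) = p-1$; taking $p = \lfloor n/r \rfloor$ then yields $\ds(r,n) \ge \lfloor n/r \rfloor - 1 = \lfloor (n-r)/r \rfloor$. The main obstacle is the regime $p > r$: there Proposition~\ref{prop:card-of-monomial-intersection} only delivers $\ds(f) \ge r-1 < p-1$, because monomials $m_j$ with $s_j = r$ get fully converted into products of linear forms that could a priori cancel the untouched monomials. The crucial insight is to localize the identity to the support $U$ of the untouched monomials, since this kills every partially-substituted contribution and reduces matters to a clean rank comparison between $G$ (of rank $|J_0|\, r$) and a sum of $|J_r|$ products of $r$ linear forms (of rank at most $r\,|J_r|$).
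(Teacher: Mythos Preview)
Your argument is correct and takes a genuinely different route from the paper's.

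The paper also partitions the monomials by how many of their variables are pivots, but into three groups $f_0,f_1,f_2$ according to whether a monomial has $0$, exactly $1$, or at least $2$ variables substituted. Assuming a degree-drop, it equates $\rank\!\big((f_0)_{|A}+(f_1)_{|A}\big)$ with $\rank\!\big((f_2)_{|A}\big)$, bounds the first from below by $rp_0$ via Lemma~\ref{lem:rank-direct-sum} (which needs that monomials in $(f_0+f_1)_{|A}$ share at most one variable, hence the restriction $r\ge 3$), and bounds the second from above by $rp_2$ via Lemma~\ref{lemdsmrk}. The counting $p-1\ge p_1+2p_2$ then gives $p_0>p_2$, a contradiction. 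Because the lower-bound step fails for $r=2$, and because Proposition~\ref{prop:card-of-monomial-intersection} only handles $p\le r$, the paper has to treat $r=2$ separately (Proposition~\ref{propdsmdegree2} and its auxiliary Lemma~\ref{lemdsmdegree2}) and split off the case $p\le r$.

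Your decomposition is coarser on one end ($J_0$ versus everything else) but finer on the other (you single out $J_r$, the monomials with \emph{all} variables substituted). The decisive idea absent from the paper's proof is your projection onto monomials supported on $U=\bigcup_{j\in J_0}\var(m_j)$: this annihilates every partially-substituted contribution in one stroke, so the middle range $J_+\setminus J_r$ simply disappears. What remains is a clean comparison between $\rank(G)=r|J_0|$ (Lemma~\ref{lem:rank-direct-sum} applied to an honest direct sum, so no constraint on $r$) and the rank of a sum of $|J_r|$ products of $r$ linear forms, which is at most $r|J_r|$ by an elementary observation rather than by Lemma~\ref{lemdsmrk}. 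The resulting inequalities $|J_0|\le|J_r|$ and $|J_0|\ge 1+(r-1)|J_r|$ collide for every $r\ge 2$, so your proof is uniform in $r$ and $p$ and dispenses with Proposition~\ref{propdsmdegree2}, Lemma~\ref{lemdsmdegree2}, Lemma~\ref{lemdsmrk}, and the appeal to Proposition~\ref{prop:card-of-monomial-intersection}. One minor point of presentation: when you say ``I may choose $y_1,\ldots,y_{p-1}\in\var(f)$'', what is really happening is that pivots outside $\var(f)$ can be discarded, leaving a system of codimension $k\le p-1$ with all pivots in $\var(f)$; your counting then gives $|J_0|\ge (p-k)+(r-1)|J_r|\ge 1+(r-1)|J_r|$, so the argument goes through unchanged.
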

{\em Proof}. By Lemma~\ref{lem:bits-and-bobs} (iv), $f$ has degree-drop spaces of co-dimension $p$.
We need to prove that no affine space $A$ of co-dimension $p-1$ can be a degree-drop hyperplane.
When $p\le r$, this follows from Proposition~\ref{prop:card-of-monomial-intersection}.
%
Assume now $p>r$. The case $r=2$ was proven in Proposition~\ref{propdsmdegree2}, so we can assume $r\ge 3$.
Assume, for a contradiction, that $A$ is a degree-drop subspace for $f$.
Let the equations that define $A$ be  (in diagonalized form)
$x_{i_1} = a_{i_1}(y), \ldots, x_{i_{p-1}} = a_{i_{p-1}}(y)$, where $i_1, \ldots, i_{p-1}$ are distinct and $a_{i_1}(y), \ldots, a_{i_{p-1}}(y)$ are affine functions in the $n-(p-1)$ variables $\{y_1, \ldots, y_{n-(p-1)}\} = \{x_1, \ldots, x_n\} \setminus \{x_{i_1}, \ldots, x_{i_{p-1}}\}$.
To compute $f_{\mid A}$ we substitute the variables $x_{i_1}, \ldots, x_{i_{p-1}}$ with $a_{i_1}(y), \ldots, a_{i_{p-1}}(y)$ respectively. We partition the monomials of $f$,
writing $f = f_0 + f_1 + f_2$ where the monomials in 
$f_0$ have no  variable to be substituted, the monomials in $f_1$ have exactly one variable that will be substituted and the monomials in $f_2$ have two or more variables that will be substituted. Let $p_0, p_1, p_2$ be the number of monomials in $f_0, f_1, f_2$ respectively. Obviously $p_0+ p_1+ p_2 = p$.
There are $p-1$ variables to be substituted, $p_1$ of them are in $f_1$ and at least $2p_2$ are in $f_2$; therefore $p-1\ge 2p_2+p_1$. Using $p_0+ p_1+ p_2 = p$, this implies $p_0>p_2$.

As $f$ contains $p$  monomials and no variable appears in more than one monomial, there must exist at least one monomial $m$ which does not contain any of $x_{i_1},...,x_{i_{p-1}}$ (i.e.\ $m$ is a monomial of $f_0$) and therefore  remains unchanged after substitution.  To cancel $m$ there must be at least one other monomial $m'$ of $f$ such that all the variables of $m'$ are substituted (therefore $m'$ is a monomial of $f_2$). In other words, $p_0 \neq 0$ and $p_2\neq 0$. Obviously $f_{\mid A} = (f_0)_{\mid A} + (f_1)_{\mid A} + (f_2)_{\mid A}$. Since we assumed $A$ is a degree-drop space for $f$, we have $\deg(f_{\mid A})<r$, which means $f_{\mid A} \sim_{r-1} 0$ and therefore 
\begin{equation}\label{eq:proof-dsm}
  \rank((f_0)_{\mid A} + (f_1)_{\mid A}) = \rank(f_{\mid A} + (f_2)_{\mid A}) = \rank((f_2)_{\mid A}).
\end{equation}
Since no variables are substituted in $f_0$ and only one variable is substituted in each monomial of $f_1$, the monomials of $f_0$ do not cancel out in $(f_0)_{\mid A} + (f_1)_{\mid A}$, which means
$\var(f_0)\subseteq \var((f_0)_{\mid A} + (f_1)_{\mid A})$. Moreover, any two monomials in $(f_0)_{\mid A} + (f_1)_{\mid A}$ have at most one variable in common (since the unique substitution in each monomial of $(f_1)_{\mid A}$ gives rise to a number of monomials each of which contains at most one new variable) and therefore at most $r-2$ variables in common (since $r\ge 3$). We apply Lemma~\ref{lem:rank-direct-sum} to $(f_0)_{\mid A} + (f_1)_{\mid A}$, obtaining $\rank((f_0)_{\mid A} + (f_1)_{\mid A}) = |\var((f_0)_{\mid A} + (f_1)_{\mid A})| \ge |\var(f_0)|  = rp_0$. On the other hand, by Lemma~\ref{lemdsmrk}, $\rank( (f_2)_{\mid A}) \le \rank(f_2) = rp_2$. Combining these inequalities with~\eqref{eq:proof-dsm}, we obtain $rp_0 \le rp_2$, i.e. $p_0 \le p_2$, contradicting the inequality $p_0>p_2$ proven at the end of the previous paragraph.
\hfill $\Box$

\begin{corollary}\label{cor:bounds-deg-stab}
 We have the following lower and upper bounds for $\ds(r,n)$ when $2\le r\le n-1$:
\[
\left\lfloor \frac{n-r}{r}\right\rfloor \le \ds(r,n) \le n-r-1.
\]
When $r=2$, equality is achieved for the lower bound.
Slightly improved upper bounds hold in the following cases:\\
(i) if $r\in \{3,4\}$ and $n\ge 8$, then $\ds(r,n) \le n-6$.\\
(ii) if $r$ is odd and $5\le r\le n-2$, then $\ds(r,n) \le n-r-2$.\\
Equality is achieved for the upper bounds when $n=8$, or when $r\in \{n-1,n-2\}$.
\end{corollary}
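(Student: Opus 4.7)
The plan splits into four components: the lower bound, the basic upper bound, the two refinements, and the equality claims.

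For the lower bound, I would apply Theorem~\ref{thm:direct-sum} with $p = \lfloor n/r \rfloor$. Provided $p \ge 2$ (i.e.\ $n \ge 2r$), the theorem produces a direct sum of monomials with $\ds(f) = p-1$, and an immediate arithmetic check (writing $n = qr+s$ with $0 \le s < r$) gives $\lfloor n/r\rfloor - 1 = \lfloor (n-r)/r\rfloor$. When $n < 2r$ the claimed lower bound is $0$, which is automatic. The tightness assertion for $r=2$ is exactly the content of Proposition~\ref{propdsmdegree2}, since $\ds(2,n) = \lfloor n/2\rfloor - 1 = \lfloor(n-2)/2\rfloor$.

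For the basic upper bound $\ds(r,n) \le n-r-1$, the natural approach is induction on $n \ge r+1$. The base case $n=r+1$ is Lemma~\ref{lem:deg_n-1}, which says every function of degree $n-1$ has degree-drop hyperplanes, so $\ds(r,r+1) = 0 = (r+1)-r-1$. The inductive step is immediate from Lemma~\ref{lem:bits-and-bobs}(viii): $\ds(r,n+1) \le \ds(r,n)+1 \le (n+1)-r-1$.

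For improvement (ii), I would run the same induction but start one step later, at $n=r+2$. Since $r$ is odd and $r \ge 5$, we have $n=r+2$ odd, and Proposition~\ref{casrnmo2} gives $\ds(r,r+2) = 0$; Lemma~\ref{lem:bits-and-bobs}(viii) then carries this up to $\ds(r,n) \le n-r-2$ for every $n \ge r+2$. Improvement (i) is structurally different because no purely theoretical seed value is available: I would invoke the exhaustive 8-variable computation of Section~\ref{sec:experiments}, which yields $\ds(3,8) \le 2$ and $\ds(4,8) \le 2$, and then propagate this by Lemma~\ref{lem:bits-and-bobs}(viii) to $\ds(r,n) \le n-6$ for all $n \ge 8$ with $r \in \{3,4\}$.

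The equality statements then amount to assembling sharpness results already in place: for $r=n-1$ the upper bound is $0$ and Lemma~\ref{lem:deg_n-1} shows it is attained; for $r=n-2$ Proposition~\ref{casrnmo2} gives the exact value $1$ or $0$, matching $n-r-1$ when $n$ is even and $n-r-2$ when $n$ is odd (with $r\ge 5$ so that bound (ii) applies); and tightness at $n=8$ for the remaining degrees is part of the experimental classification in Section~\ref{sec:experiments}. The principal obstacle I anticipate is that improvement (i) cannot be deduced from the structural lemmas alone—it genuinely requires the forthcoming computational results, so the proof must forward-reference Section~\ref{sec:experiments} for its seed values.
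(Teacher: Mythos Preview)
Your proposal is correct and follows essentially the same route as the paper: the lower bound from Theorem~\ref{thm:direct-sum}, the upper bound by iterating Lemma~\ref{lem:bits-and-bobs}(viii) from the seed $\ds(r,r+1)=0$ (Lemma~\ref{lem:deg_n-1}), refinement (ii) by reseeding at $\ds(r,r+2)=0$ via Proposition~\ref{casrnmo2}, and refinement (i) by reseeding at the computed values $\ds(3,8)=\ds(4,8)=2$ from Section~\ref{sec:experiments}. Your explicit treatment of the equality cases and the arithmetic identity $\lfloor n/r\rfloor-1=\lfloor(n-r)/r\rfloor$ is more detailed than the paper's version, but the substance is the same.
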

{\em Proof.}
The lower bound follows from Theorem~\ref{thm:direct-sum} above. For the upper bound, Lemma~\ref{lem:bits-and-bobs} (viii) implies $\ds(r,n)\le \ds(r,n-t)+t$ for any $t$ such that $r\leq n-t$, that is, $1\le t\le n-r$. Recall from the beginning of Section \ref{sec5} that $\ds(r,r)=\ds(r,r+1)=0$. Hence, for $t=n-r-1$, we obtain a better bound than for $t=n-r$: $\ds(r,n)\le \ds(r,r+1)+n-r-1$, which gives $\ds(r,n)\le  n-r-1$ since $\ds(r,r+1)=0$ by Lemma~\ref{lem:deg_n-1}. Further, from Proposition~\ref{casrnmo2}, $\ds(r,r+2)=0$ when $r$ is odd and $r\leq n-2$ which implies now that  for $t=n-r-2$, we obtain a better bound than for $t=n-r-1$. This yields $\ds(r,n)\le  n-r-2$. Finally, the computer calculations described in Section~\ref{sec:experiments} for $n=8$ give $\ds(3,8) = \ds(4,8) =2$ (see Table~\ref{table-ds}), which combined with $\ds(r,n)\leq \ds(r,8)+n-8$ yield the upper bounds at point (i).
\hfill $\Box$

\subsection{ Symmetric functions} 
A function is called symmetric if it is invariant under every permutation
of the variables, i.e. $f(x_1, \ldots, x_n) = f(x_{\pi(1)}, \ldots,
x_{\pi(n)})$ for any permutation $\pi$ on $\{1, \ldots, n\}$. For Boolean functions, this is equivalent to the fact that $f$ outputs
the same value for all the inputs of a same Hamming weight. Further,
when a symmetric Boolean function contains a degree $r$ monomial,
then it contains all the other degree $r$ monomials. Examples of symmetric functions are the
majority functions (and more generally, threshold
functions) which are known for having optimal algebraic immunity, an
important notion in cryptography (see \cite{dalai,Carlet}).

The restriction of a symmetric function to an affine space is not necessarily a symmetric function. For example, the symmetric homogeneous function of degree 3 in 5 variables, when restricted to the space defined by $x_1 = x_2+x_3$ becomes $x_2x_3x_4 + x_2x_3x_5 + h(x_2,x_3,x_4)$ for a function $h$ of degree 2, so it is no longer symmetric.

In the next theorem, we assume $2\leq r\leq n-2$ since the cases of $r=1,n$ were addressed in Lemma \ref{lem:bits-and-bobs} (vii) and the case $r=n-1$ was addressed in Lemma~\ref{lem:deg_n-1}.

\begin{theorem}\label{propsymrest}
Let $f$ be a symmetric Boolean function in $n$ variables of degree~$r$, with 
$2 \leq r\leq n-2$.
\\
(i) If  $r$ is even,
then $f$ has no degree-drop hyperplane. 
\\
(ii) If $r$ is odd, 
then $f$ has exactly one
degree-drop linear hyperplane,  of equation
$x_1+x_2+
\ldots + x_n =0$.
\end{theorem}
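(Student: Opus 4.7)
The plan is to reduce to the case where $f$ is the homogeneous symmetric polynomial $\sigma_r := \sum_{|I|=r} \prod_{i \in I} x_i$, the sum of all degree-$r$ monomials, which is the top-degree part of any symmetric function of degree $r$ (and which alone determines whether any affine space is degree-drop). I would then study the set
\[
V = \{\mathbf{0}\} \cup \{a \in \F_2^n \setminus \{\mathbf{0}\}: \{x : a^T x = 0\} \text{ is a degree-drop hyperplane of } f\},
\]
which is a linear subspace by Proposition~\ref{vshyp}. Since $f$ is invariant under any permutation of variables, Lemma~\ref{lem:affine-invariance} (applied to permutation matrices) shows that $V$ is invariant under the induced $S_n$-action on $\F_2^n$; and by Lemma~\ref{lem:affine-vs-linear-dd-space}, classifying degree-drop linear hyperplanes also settles the affine case.

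The next step is to classify the $S_n$-invariant subspaces of $\F_2^n$. By a short weight argument --- any weight-$k$ vector with $2 \le k \le n-1$, combined with an appropriate permuted copy of itself, produces a weight-$2$ vector, and the weight-$2$ vectors span the even-weight subspace $E_n := \{a : \sum_i a_i = 0\}$ --- these subspaces are exactly $\{\mathbf{0}\}$, $\mathrm{span}(\mathbf{1})$, $E_n$ and $\F_2^n$, and they are pairwise distinct under the hypothesis $r \le n-2$ with $r \ge 2$ (so $n \ge 4$).

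To pin $V$ down among these four options, I would compute two restrictions of $\sigma_r$. First, $\sigma_r|_{x_1 = 0} = \sigma_r(x_2, \ldots, x_n)$ still has degree $r$ (since $r \le n-1$), so $e_1 \notin V$ and $V \ne \F_2^n$. Second, partitioning the monomials of $\sigma_r$ according to their intersection with $\{x_1, x_2\}$ and substituting $x_1 = x_2$ (using $x_2^2 = x_2$), I would obtain
\[
\sigma_r|_{x_1 = x_2} = x_2\, \sigma_{r-2}(x_3, \ldots, x_n) + \sigma_r(x_3, \ldots, x_n),
\]
which has degree $r$ since $r \le n-2$; hence $e_1 + e_2 \notin V$ and $V \ne E_n$. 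Therefore $V \in \{\{\mathbf{0}\}, \mathrm{span}(\mathbf{1})\}$, and everything reduces to deciding whether the hyperplane $\sum_i x_i = 0$ is degree-drop.

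The technical heart of the proof is the ring identity in $\F_2[x_1,\ldots,x_n]/(x_i^2+x_i)$
\[
\left(\sum_{i=1}^{n} x_i\right) \sigma_{r-1} = r\,\sigma_r + (r-1)\,\sigma_{r-1},
\]
proved by counting, for each pair $(i,J)$ with $|J| = r-1$, whether $i \in J$ (giving a copy of $\prod_{j \in J} x_j$, because $x_i^2 = x_i$) or $i \notin J$ (giving $\prod_{j \in J \cup \{i\}} x_j$). Combining this identity (applied in the $n-1$ variables $x_2, \ldots, x_n$) with the splitting $\sigma_r(x) = x_1\, \sigma_{r-1}(x_2, \ldots, x_n) + \sigma_r(x_2, \ldots, x_n)$ and substituting $x_1 = x_2 + \cdots + x_n$ yields
\[
\sigma_r|_{\sum_i x_i = 0} = (r+1)\,\sigma_r(x_2, \ldots, x_n) + (r-1)\,\sigma_{r-1}(x_2, \ldots, x_n).
\]
For $r$ odd both coefficients vanish modulo $2$ and $\sigma_r|_H = 0$, so $\mathbf{1} \in V$ and $V = \mathrm{span}(\mathbf{1})$, giving exactly one degree-drop linear hyperplane and proving (ii). For $r$ even both coefficients equal $1$ and the $\sigma_r(x_2, \ldots, x_n)$ term has degree $r$ (since $r \le n-2 < n-1$), so $\mathbf{1} \notin V$, forcing $V = \{\mathbf{0}\}$ and proving (i). The main obstacle is getting the ring identity and the parity-based restriction computation right; once they are in hand the rest is a clean structural argument via the $S_n$-invariance.
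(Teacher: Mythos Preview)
Your proof is correct and takes a genuinely different route from the paper's. The paper argues directly by weight: using the symmetry of $f$ it reduces to hyperplanes $x_1+\dots+x_\omega=0$, and for each $\omega$ it determines, by a parity count, which degree-$r$ monomials $m$ (not containing $x_1$) survive the substitution $x_1\mapsto x_2+\dots+x_\omega$ --- namely those with $|\var(m)\cap\{2,\dots,\omega\}|$ even. The case $\omega=n$ then gives the parity dichotomy, and for $2\le\omega\le n-1$ two explicit monomials of consecutive intersection sizes show that at least one survives. Your argument instead exploits the global structure: Proposition~\ref{vshyp} makes $V$ a subspace, $S_n$-invariance forces $V\in\{\{\mathbf 0\},\langle\mathbf 1\rangle,E_n,\F_2^n\}$, and two quick restrictions (to $x_1=0$ and $x_1=x_2$) eliminate $\F_2^n$ and $E_n$, leaving only the all-ones hyperplane to test via your identity $(\sum x_i)\sigma_{r-1}=r\sigma_r+(r-1)\sigma_{r-1}$. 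Your approach is more conceptual and reduces the casework to three concrete hyperplanes, at the price of invoking Proposition~\ref{vshyp} and the classification of $S_n$-submodules of $\F_2^n$; the paper's approach is more elementary and self-contained, and its monomial-survival criterion also feeds directly into the follow-up proposition on how far the degree drops.
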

{\em Proof}.
Recall that, thanks to Lemma \ref{lem:affine-vs-linear-dd-space}, we can restrict ourselves to linear hyperplanes.
Since $f$ is
invariant under permuting the variables, we have that $\sum_{i=1}^{n}a_ix_i=0$
defines a degree-drop hyperplane if and only if $x_1+ x_2+ \ldots +
x_{\omega}=0$ defines a degree-drop hyperplane, where $\omega = \w((a_1, \ldots, a_n))$.
So let $H$ be a hyperplane with equation
$x_1+x_2+...+x_{\omega}=0$. By substituting $x_1$ by $x_2+...+x_{\omega}$
in $f$, we obtain $f_{\mid H}$ as a polynomial in the $n-1$
variables $x_2, \ldots, x_n$.

If $\omega =1$, $x_1$ is substituted by $0$; obviously all the monomials of degree $r$ of $f$
which contain $x_1$ will no longer appear in $f_{\mid H}$, whereas
those which do not contain $x_1$ (and there will be such monomials
as $r<n$) will appear in $f_{\mid H}$. Therefore, in this case $H$
is not a degree-drop hyperplane.

Next assume $\omega \geq 2$. Consider a monomial $m$ of $f$ of degree
$r$
which does not contain $x_1$ (such monomials exist as we assumed
$r<n$). 
Following the substitution of $x_1$ in $f$, the monomial $m$ is obtained
from $m$ itself and from any monomial of the form $x_1\frac{m}{x_i}$
where $i\in \var(m)\cap \{2, \ldots, \omega\}$. Hence $m$ appears
in $f_{\mid H}$ if and only if $\var(m)\cap \{2, \ldots, \omega\}$
has an even number of elements.

If $\omega = n$, i.e.\ $H$ is defined by the equation $x_1+x_2+
\ldots + x_n =0$,
then $\var(m)\cap \{2, \ldots, \omega\} = \var(m)$ has $r$ elements.
Therefore, if $r$ is odd then none of the monomials of degree $r$
appears in $f_{\mid H}$, whereas if $r$ is even then all the
monomials of degree $r$ will appear in $f_{\mid H}$. Hence $H$ is a
degree-drop hyperplane if and only if $r$ is odd.

Finally consider the case when $2\le \omega \le n-1$ (and $r\le n-2$).
We will show that in this case there are monomials $m$ for which the
set $\var(m)\cap \{2, \ldots, \omega\}$ has an even number of
elements (and there are also monomials $m$ for which this set has an
odd number of elements, but this plays no role). Namely, since $r \le n-2$, there exists at
least one integer $u$ such that $2\le u \le \omega \le u+r-1\le
n-1$ (it suffices indeed to take $u=\max(\omega-r+1, 2)$: if $2\geq \omega-r+1$,  then $u=2$ and we have clearly
  $u+r-1\geq \omega\geq u$ and since $r \leq  n - 2$, we have $u+r-1=r+1\leq n-1$; and if $2\leq \omega-r+1$ then $u=\omega-r+1$ and we have $u\leq \omega$ because $r\geq 2$ and $u+r-1=\omega \leq n-1$).  Consider the following two monomials of degree $r$: $m_1 =
x_u x_{u+1}\cdots x_{u+r-1}$ and $m_2 = x_{u+1} x_{u+2}\cdots
x_{u+r}$. Note that $|\var(m_1)\cap \{2, \ldots, \omega\}| =
|\var(m_2)\cap \{2, \ldots, \omega\}|+1$. Therefore, according to the property proved above, one and only one among the
monomials $m_1$ and $m_2$ will appear in $f_{\mid H}$ . In other words, $f_{\mid H}$ will contain at least one
monomial of degree $r$, therefore $H$ cannot be a degree-drop
hyperplane in this case.
\hfill $\Box$
\begin{example}
  The majority function returns the value of the majority of its $n$ input bits, returning 0  in case of equal number of 0 and 1 in the input when $n$ is even.
  It is obviously a symmetric function. Its degree is $2^{\lfloor \log_2 n\rfloor}$ (see~\cite{dalai}
). By Theorem~\ref{propsymrest}, we see that the majority function has no degree-drop hyperplane except for the cases where $2^{\lfloor \log_2 n\rfloor}=n$ and $2^{\lfloor \log_2 n\rfloor}=n-1$, that is $n$ is of the form $2^s$ or $2^s+1$ for some integer $s\ge 0$.
\end{example}
When a symmetric function admits a degree-drop hyperplane, we can also look whether the degree drops by 1 or by more than 1, after
the restriction to some hyperplanes. Unlike Theorem \ref{propsymrest}, in which the
degree of a function  drops in an affine hyperplane  if and only if
its degree drops in the corresponding linear hyperplane (by using
Lemma~\ref{lem:affine-vs-linear-dd-space}), we see in the next proposition that the way
the degree drops in an affine hyperplane is not necessarily the same
as in its corresponding
linear hyperplane.  \\Note that we need to take into account the whole function and not only its degree $r$ part.
\begin{proposition}
Let $f$ be a symmetric Boolean function in $n$ variables of degree
$r\geq 2$ and let $H$ be a degree-drop linear hyperplane of $f$, defined by the equation $x_{i_1} + \cdots + x_{i_{\omega}} = 0$ for some $1\le i_1 < \ldots <i_{\omega}\le n$. Denote by $H'$ the hyperplane of equation $x_{i_1} + \cdots + x_{i_{\omega}} = 1$.\\
 (i) If  $f$ contains a degree $r-1$ monomial, then the degree of $f$ drops by 1 when $f$ is restricted to $H$; it drops by at least 2 when $f$ is restricted to $H'$.\\
(ii) If $f$ contains no degree  $r-1$ monomial, then the degree drops by  1 when $f$ is restricted to $H'$, and by at least 2 when $f$ is  restricted to $H$, unless $r=n-1$ and $\omega<n$ is even (then the degree drops by  1, both when $f$ is restricted to $H'$ and when $f$ is  restricted to $H$) and $r=n$ and $\omega$ is even (then the degree drops by  1 when $f$ is restricted to $H$, and by at least 2 when $f$ is  restricted to $H'$).
\end{proposition}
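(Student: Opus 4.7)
The plan is to extend the substitution method used for Theorem~\ref{propsymrest}, now tracking the degree-$(r-1)$ part of the restriction. By the symmetry of $f$, I would reduce to the case $H:\sum_{i=1}^{\omega}x_i=0$, $H':\sum_{i=1}^{\omega}x_i=1$, and compute $f_{\mid H}$ and $f_{\mid H'}$ in parallel by substituting $x_1\leftarrow x_2+\cdots+x_\omega+c$ with $c=0$ for $H$ and $c=1$ for $H'$. (By Lemma~\ref{lem:affine-vs-linear-dd-space}, the degree of $f_{\mid H'}$ is also at most $r-1$, since $H$ and $H'$ share the same underlying vector space.)

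The one computational identity needed is that, using $x_j^2=x_j$, a monomial $x_1\mu$ of degree $d$ contributes under the substitution
\[
|\var(\mu)\cap\{2,\dots,\omega\}|\cdot\mu \;+\; \sum_{j\in\{2,\dots,\omega\}\setminus\var(\mu)}x_j\mu \;+\; c\,\mu ,
\]
whose degree-$d$ piece is the middle sum and whose degree-$(d-1)$ piece is $(|\var(\mu)\cap\{2,\dots,\omega\}|+c)\cdot\mu$. Since $f$ is symmetric, each level is either complete or absent, so after summing the contributions from levels $r$ and $r-1$ to the coefficient of a fixed degree-$(r-1)$ monomial $\mu'$ not containing $x_1$, I obtain the formula (in $\mathbb{F}_2$)
\[
\mathrm{coef}(\mu')=\begin{cases}|\var(\mu')\cap\{2,\dots,\omega\}|+c & \text{if $f$ has no degree-$(r-1)$ monomial,} \\ 1+c & \text{if it does,}\end{cases}
\]
the cancellation in the second case being the key point: the two $|\var(\mu')\cap\{2,\dots,\omega\}|$-dependent terms coming from levels $r$ and $r-1$ add modulo $2$ and drop out.

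From this formula, part~(i) falls out uniformly in $r$: with $a_{r-1}=1$ the coefficient is $1$ on $H$, so the drop is exactly one, and $0$ on $H'$, so the drop is at least two. For part~(ii) with $a_{r-1}=0$, the coefficient $|\var(\mu')\cap\{2,\dots,\omega\}|+c$ varies with $\mu'$, and the analysis reduces to checking which parities of $|\var(\mu')\cap\{2,\dots,\omega\}|$ are achieved as $\mu'$ ranges over the degree-$(r-1)$ monomials avoiding $x_1$. In the range $2\le r\le n-2$, Theorem~\ref{propsymrest} already forces $\omega=n$ and $r$ odd, so $|\var(\mu')\cap\{2,\dots,n\}|=r-1$ is even for every $\mu'$, giving drop $\ge 2$ on $H$ and drop exactly one on $H'$, as announced.

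The main obstacle is the exceptional regime in which the top level of $f$ contains only one monomial not involving $x_1$, which forces a uniform parity on degree-$r$ coefficients and therefore obstructs the generic parity argument at level $r-1$. For $r=n-1$ the unique such monomial is $x_2\cdots x_n$, so the degree-$r$-part analysis of Theorem~\ref{propsymrest} gives $H$ degree-drop iff $\omega$ is even; when additionally $\omega<n$, each degree-$(r-1)$ monomial $\mu'=\prod_{i\in\{2,\dots,n\}\setminus\{j\}}x_i$ has $|\var(\mu')\cap\{2,\dots,\omega\}|$ equal to $\omega-2$ or $\omega-1$ according to whether $j\in\{2,\dots,\omega\}$ or $j\in\{\omega+1,\dots,n\}$, so both parities occur and each of $H,H'$ drops by exactly one. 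For $r=n$ the unique top monomial $x_1\cdots x_n$ reduces entirely and the coefficient of the unique degree-$(r-1)$ monomial $x_2\cdots x_n$ becomes $\omega-1+c$, whose parity alternates with $\omega$, producing the stated $\omega$-even exception. All remaining sub-cases are routine parity checks with the same coefficient formula.
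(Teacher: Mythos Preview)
Your proof is correct and follows essentially the same approach as the paper's: reduce by symmetry to the hyperplanes $x_1+\cdots+x_\omega=c$, substitute $x_1$, and track the coefficient of each degree-$(r-1)$ monomial $\mu'$ not containing $x_1$ by summing contributions from the relevant monomials of $f$ at levels $r$ and $r-1$. Your unified coefficient formula $1+c$ (when the $(r-1)$-level is present) versus $|\var(\mu')\cap\{2,\dots,\omega\}|+c$ (when it is absent) is exactly the count the paper obtains, just packaged more compactly; the subsequent case split on $r\le n-2$, $r=n-1$, $r=n$ and the parity analysis match the paper's argument step for step.
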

{\em Proof}.
As in the previous proof, due to symmetry it suffices to  consider the hyperplanes $H$ and $H'$ given by the equation $x_1+x_2+...+x_{\omega}=\epsilon$ with $\epsilon=0,1$ respectively. We substitute $x_1 = x_2+...+x_{\omega}+\epsilon$ (which writes $x_1=\epsilon$ if $\omega=1$).
\\
(i) Assume a degree $r-1$ monomial $t$ appears in $f$. Since $f$ is
symmetric, this means that all degree $r-1$ monomials appear in $f$. Consider such a monomial $t$ that does not contain $x_1$. Then $t$ is invariant under the substitution of $x_1$ and $t$ cannot be obtained from other monomials not containing $x_1$,  after substitution.  Let us determine how many times $t$ can be obtained after substitution from monomials containing $x_1$: \\
- the degree $r$ monomial $x_1t$ gives $t$ for each $i\in \var(t)\cap \{2,3,...,\omega\}$, and once more if $\epsilon =1$;  \\
- the degree $r-1$ monomial $x_1\frac{t}{x_i}$ gives $t$ for each  $i\in \var(t)\cap
\{2,3,...,\omega\}$. \\
These are the only monomials giving $t$ (not forgetting that $t$ itself gives $t$ after substitution). Then, $t$ appears the odd number of times $1+2|\var(t)\cap
\{2,3,...,\omega\}|$ after the substitution if $\epsilon=0$ and the even number of times $2+2|\var(t)\cap
\{2,3,...,\omega\}|$ if $\epsilon=1$, therefore $t$ appears in  $f_{|H}$ but not in $f_{|H'}$.
%
\\
(ii) Assume  now that no degree $r-1$ monomial appears in $f$. A
monomial $t$ of degree $r-1$ that does not contain $x_1$ can appear
in $f_{\mid H}$ after applying the substitution to $x_1t$ in $f$. It
will appear  once for each $i$ in $\var(t)\cap \{2,3,...,\omega\}$, and one extra time if $\epsilon =1$. This means
that $t$ appears in $f_{\mid H}$ if and only if $\var(t)\cap
\{2,3,...,\omega\}$ has an odd number of elements;  $t$ appears in $f_{\mid H'}$ if and only if $\var(t)\cap
\{2,3,...,\omega\}$ has an even number of elements.
\\
a)
Assume $r\leq n-2$. From Theorem~\ref{propsymrest}, $f$ has a degree-drop linear hyperplane if and only if $r$ is odd and the hyperplane  is
$x_1+x_2+...+x_n=0$ meaning that $\omega=n$. Since we assume that $f$ has a degree-drop hyperplane, $r$ is odd and $\omega=n$. Since $|\var(t)\cap
\{2,3,...,n\}|=|\var(t)|=r-1$ is even, then $f_{\mid H}$ contains no
degree $r-1$ monomial, that is $H$ is
a degree-drop space by at least 2; on the other hand, $t$ appears in $f_{\mid H'}$, so the degree drops by 1 when $f$ is restricted to $H'$.
\\
b) Assume $r=n-1$. From Theorem~\ref{propsymrest} we know that $H$ is a degree-drop hyperplane if and only if $\omega$ is even. Consider first the case  $\omega<n$.
Since $r = n-1$, consider the following degree $n-2$ monomials: $t_1 =
x_2 x_3\cdots x_{n-1}$ and $t_2 = x_3 x_4\cdots x_n$ which do not
contain $x_1$. Clearly,   after substitution, $t_i$ appears  $|\var(t_i)\cap \{2,3,...,\omega\}|$ times, with $i=1,2$, since $t_i$ can come only from $x_1t_i$ (of degree $n-1$), because in the hypothesis of (ii), we have that no degree $r-1$ monomial appears in $f$. But $|\var(t_1)\cap \{2,3,...,\omega\}|=|\var(t_2)\cap
\{2,3,...,\omega\}|+1$  means that exactly one of the monomials $t_1$
and $t_2$ will appear in $f_{\mid H}$ (and similarly one of them will appear in $f_{\mid H'}$), and therefore the degree drops by 1 in this case. It remains to consider the case $\omega=n$, which means that $n$ is even, as $\omega$ is even. Any monomial $t$ of degree $r-1$ which does not contain $x_1$ will appear $|\var(t)\cap \{2,3,...,n\}| = r-1 = n-2$ times in $f_{\mid H}$, and $r = n-1$ times in $f_{\mid H'}$. Since $n$ is even, the degree drops by 1 on $H'$ and by at least 2 on $H$.
\\
 c) For $r=n$, we know from Theorem~\ref{propsymrest} that all the hyperplanes are degree-drop hyperplanes of $f$ and we observe that there is only one
 degree $r-1=n-1$ monomial $t$ which does not contain $x_1$, namely $x_2\cdots x_n$, for which  $t$ appears  $|\var(t)\cap \{2,3,...,\omega\}| = \omega-1$ times in $f_{\mid H}$ and $\omega$ times in $f_{\mid H'}$.\\
 This completes the proof since in all the other cases, there is no degree drop.
%
 \hfill $\Box$
 \section{Determining the cardinality of
$K_{1,r,n}$ via connections to fast points}\label{sec:cardofK1rn}
We start this section by showing that the existence of degree-drop hyperplanes for a homogeneous function $f$ is equivalent to the existence of fast points for its complement function $f^c$. We then use this result to, firstly, reformulate Theorem~\ref{com.scon.ddhyp} for giving a sufficient condition for a function to have no fast point, and secondly, for counting the number of functions that have no degree-drop hyperplane, i.e.\  the
cardinality of the set $K_{1,r,n}$, using the results from~\cite{SalMan17,SalOzb20} on the number of functions admitting fast points.

\begin{remark} A comparison between the properties of having a degree-drop hyperplane
and having a fast point was made in~\cite{CS}. It was shown that there
exist functions that have no fast point but do have degree-drop
hyperplanes (e.g.\ $f(x_1,\dots ,x_5)= x_1x_2x_5+x_3x_4x_5$), and
there exist functions that do have fast points but have no degree-drop hyperplane (e.g.\ $f(x_1,\dots ,x_5)= x_1x_2+x_3x_4$).
There are also functions which have neither; for example, the function $f = \sum_{i=1}^{p}m_i$ with $m_i$ monomials satisfying Conditions (\ref{c1}), (\ref{c2})  and $ \bigcup_i\var(m_i)=\{1,2, \ldots, n\}$, see Propositions~\ref{prop:card-of-monomial-intersection} and~\ref{pcomplem}.
\end{remark}
We show that the existence of degree-drop hyperplanes for a homogeneous function $f$ is equivalent to the existence of fast points for the complement function $f^c$.
 We will actually prove a more general result. \\In the next theorem, we use the notation $a\cdot x$ for the usual inner product $a\cdot x=\sum_{i=1}^{n} a_ix_i$ between $a=(a_1,\dots ,a_n)\in \mathbb F_2^n$ and $x=(x_1,\dots ,x_n)\in \mathbb F_2^n$. We shall need to consider several vectors. We shall denote them by $a^{(1)},\dots , a^{(k)}$, to avoid any confusion with the coordinates $a_1,\dots ,a_n$ of a vector.
\begin{theorem}\label{thm:dd-space-fast-space}
  Let $f$ be a homogeneous Boolean function of algebraic degree $r$ in $n$ variables, with $r<n$. Let $1\le k\le n-r$, and let $a^{(1)}, \ldots, a^{(k)}$ be $k$ linearly independent elements of $\F_2^n$. The following statements are  equivalent:
\begin{itemize}
  \item The linear space defined by the $k$ equations $a^{(1)}\cdot x =\ldots =a^{(k)}\cdot x =0$
  is a  degree-drop subspace for $f$.
  \item
  We have
  $\deg(D^{(k)}_{a^{(1)}, \ldots, a^{(k)}} f^{c}) <\deg(f^c)-k$, where $$D^{(k)}_{a^{(1)}, \ldots, a^{(k)}}f = D_{a^{(1)}} (D_{a^{(2)}} (\ldots D_{a^{(k)}}f))$$ (i.e\ the linear space generated by $a^{(1)}, \ldots, a^{(k)}$ is what is called a ``fast space'' for $f^c$ in~\cite[Definition~5]{SalMan17}).
  \end{itemize}
\end{theorem}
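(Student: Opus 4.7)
The plan is to reduce the claim to the simpler case where $L$ is the coordinate subspace $E_k = \{x \in \F_2^n : x_1 = \cdots = x_k = 0\}$, and then dispatch that case by a direct ANF computation.

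First I would choose a basis $b^{(1)}, \ldots, b^{(n)}$ of $\F_2^n$ with $b^{(k+1)}, \ldots, b^{(n)}$ spanning $L$. Letting $\psi$ be the invertible linear map sending $e_i \mapsto b^{(i)}$, one has $\psi(E_k) = L$, so by Lemma~\ref{lem:affine-invariance}, $L$ is a degree-drop subspace for $f$ if and only if $E_k$ is a degree-drop subspace for $g := f \circ \psi$.

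Second, I would transport the derivative condition through $\psi$. Proposition~\ref{prop:Hou-complement} gives $g^c = f^c \circ \tilde{\psi} + h_1$, where $\tilde{\psi}$ is the ``transpose-inverse'' linear map associated to $\psi$ (i.e.\ if $\psi = \varphi_M$ then $\tilde{\psi} = \varphi_{(M^T)^{-1}}$) and $\deg(h_1) < n-r$. Using the chain rule $D_a(g \circ T) = (D_{T(a)} g) \circ T$ valid for any linear $T$, one obtains
\[
D_{e_1}\cdots D_{e_k} g^c \;=\; \bigl(D_{\tilde{\psi}(e_1)} \cdots D_{\tilde{\psi}(e_k)} f^c\bigr) \circ \tilde{\psi} \;+\; D_{e_1}\cdots D_{e_k} h_1,
\]
where the last term has degree strictly less than $n-r-k$. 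The expansion $D_{v_1} \cdots D_{v_k}g(x) = \sum_{y \in \langle v_1,\ldots,v_k\rangle} g(x+y)$ shows that the higher derivative depends only on the linear span of the chosen directions. A dual-basis argument (the rows of $M^{-1}$ form the dual basis to the columns of $M$) shows that $\tilde{\psi}(e_1), \ldots, \tilde{\psi}(e_k)$ span the same subspace as $a^{(1)}, \ldots, a^{(k)}$, namely the annihilator of $L$. Hence $D_{\tilde{\psi}(e_1)} \cdots D_{\tilde{\psi}(e_k)} f^c = D_{a^{(1)}}\cdots D_{a^{(k)}} f^c$, and comparing top-degree parts gives $\deg(D_{e_1}\cdots D_{e_k} g^c) < n-r-k$ if and only if $\deg(D_{a^{(1)}}\cdots D_{a^{(k)}} f^c) < n-r-k$.

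Third, I would settle the coordinate case for $g$. For each degree-$r$ monomial $m$ of $g$, the complement $m^c = \prod_{i \notin \var(m)} x_i$ has degree $n-r$, and $D_{e_1}\cdots D_{e_k} m^c$ is nonzero precisely when $\{1,\ldots,k\} \subseteq \var(m^c)$, i.e.\ when $\var(m) \cap \{1,\ldots,k\} = \emptyset$; in that case it equals the monomial $m^c/(x_1 \cdots x_k)$ of degree exactly $n-r-k$. Distinct monomials $m$ of $g$ yield distinct such monomials (each one determines $\var(m) \subseteq \{k+1,\ldots,n\}$), so no cancellation occurs, and $\deg(D_{e_1}\cdots D_{e_k} g^c) < n-r-k$ if and only if every monomial of $g$ contains at least one of $x_1, \ldots, x_k$, if and only if $E_k$ is a degree-drop subspace for $g$. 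Chaining the three equivalences yields the theorem. The main obstacle I expect is tracking the dual change of basis through Proposition~\ref{prop:Hou-complement}; once one verifies that $\tilde{\psi}$ sends the first $k$ standard basis vectors into the annihilator of $L$, the invariance of higher derivatives under basis changes of the direction subspace handles the rest, and the coordinate case reduces to a straightforward monomial count.
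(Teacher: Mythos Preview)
Your proposal is correct and follows essentially the same route as the paper's proof: reduce to the coordinate subspace $E_k$ via a linear change of variable, transport the complement through Proposition~\ref{prop:Hou-complement}, apply the chain rule for higher derivatives, and verify the coordinate case by inspecting monomials. The only cosmetic slip is writing $g := f\circ\psi$ and then $g^c$, since $f\circ\psi$ need not be homogeneous; in practice (and in the paper) one works with the degree-$r$ part of $f\circ\psi$, and Proposition~\ref{prop:Hou-complement} absorbs the lower-degree remainder into $h_1$, so nothing changes. Your explicit dual-basis check that $\tilde\psi(e_1),\ldots,\tilde\psi(e_k)$ span $L^\perp$, together with the span-invariance of the iterated derivative, is exactly what the paper does more tersely by citing \cite[Theorem~7]{SalMan17} and choosing $M$ so that $e_i=a^{(i)}M$.
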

{\em Proof. }
Let $V$ be the linear space defined by the $k$ equations $a^{(1)}\cdot x =\ldots =a^{(k)}\cdot x =0$. In other words, $V$ equals the orthogonal $\langle a^{(1)},\ldots ,a^{(k)}\rangle^\perp$ of the vector space $\langle a^{(1)},\ldots ,a^{(k)}\rangle$ generated by $a^{(1)},\ldots ,a^{(k)}$. Recall that, for an $n\times n$ invertible matrix $M$ over $\F_2$,  we defined in Section \ref{2} the function $\varphi_M:\F_2^n \rightarrow \F_2^n$  as $\varphi_M(x) = xM^T$.
\\
According to Theorem~\ref{carc cod k}, $V$ is a degree-drop space for $f$ if and only if there exists an invertible linear transformation $\varphi_M$ such that, defining $g = f \circ \varphi_M+h$, with $g$ homogeneous of degree $r$ and $\deg(h)<r$, the linear space $W$ defined by the equations $x_1= \ldots =x_k=0$ is a degree-drop space for $g$. We can rewrite these equations as $e_{1}\cdot x= \ldots =e_{k}\cdot x=0$, where $(e_{1},\dots ,e_{n})$ is the canonical basis of $\mathbb F_2^n$ defined in Section~\ref{2}. 
On the other hand, by Lemma~\ref{lem:affine-invariance}, 
  we have $V=\varphi_M(W)$. 
Since, for every $x,y$, we have $\varphi_M(x)\cdot y=x\cdot \varphi_M^*(y)$ where $\varphi_M^*$ is the adjoint operator of $\varphi_M$, whose matrix is the transpose of $M$, we have that $y$ belongs to $V^\perp=\langle a^{(1)},\ldots ,a^{(k)}\rangle$ if and only if $\varphi_M^*(y)$ belongs to $W^\perp=\langle e_{1},\ldots ,e_{k}\rangle$, and we can then assume, without loss of generality thanks to Theorem~\ref{carc cod k}, that $M$ is such that $e_i = a^{(i)}M$ for $i=1, \ldots, k$.
\\
We know that $W$ is a degree-drop space for $g$ if and only if each of the monomials of $g$ contains at least one of the variables $x_1, \ldots, x_k$. Equivalently, each of the monomials of $g^c$ is missing at least one of the variables $x_1, \ldots, x_k$. This is also equivalent to the fact that $\deg(D^{(k)}_{e_1, \ldots, e_k} g^{c}) <\deg(g^c)-k$, since
 for every $g$, we can write $g^c(x_1, \ldots,x_n) = g_1(x_1, \ldots,x_n) + x_1x_2\cdots x_k\ g_2(x_{k+1}, \ldots,x_n)$, where each of the monomials of $g_1$ is missing at least one of the variables $x_1, \ldots, x_k$, and we have $D^{(k)}_{e_1, \ldots, e_k} g^{c} = g_2$, since $D^{(k)}_{e_1, \ldots, e_k}g_1=0$ and $D^{(k)}_{e_1, \ldots, e_k} (x_1x_2\cdots x_k)=1$; the condition $\deg(D^{(k)}_{e_1, \ldots, e_k} g^{c}) <\deg(g^c)-k$ is then equivalent to $g_2=0$ since $g^c$ is homogeneous.
\\
From Proposition \ref{prop:Hou-complement}, for any invertible matrix $M$, we have that $g = f \circ \varphi_{M}+h$ for some $h$ with $\deg(h)<\deg(f)$ if and only if $g^c = f^c \circ \varphi_{(M^T)^{-1}}+h_1$ for some $h_1$ with $\deg(h_1)<\deg(f^c)$. From~\cite[Theorem~7]{SalMan17} we know that
\begin{eqnarray*}
    D^{(k)}_{e_1, \ldots, e_k} g^{c}
    &=& (D^{(k)}_{\varphi_{(M^T)^{-1}}(e_1), \ldots, \varphi_{(M^T)^{-1}}(e_k)} f^{c}) \circ \varphi_{(M^T)^{-1}} +  D^{(k)}_{e_1, \ldots, e_k} h_1\\
    &=& (D^{(k)}_{a^{(1)}, \ldots, a^{(k)}} f^{c})\circ \varphi_{(M^T)^{-1}} +  D^{(k)}_{e_1, \ldots, e_k} h_1.
\end{eqnarray*}
Due to the affine invariance of the degree, we have that
\begin{equation*}
    \deg(D^{(k)}_{a^{(1)}, \ldots, a^{(k)}} f^{c}) = \deg((D^{(k)}_{a^{(1)}, \ldots, a^{(k)}} f^{c})\circ \varphi_{(M^T)^{-1}}) = \deg(D^{(k)}_{e_1, \ldots, e_k} g^{c}+D^{(k)}_{e_1, \ldots, e_k} h_1).
\end{equation*}
Hence, since we have $\deg(D^{(k)}_{e_1, \ldots, e_k} h_1))\le\deg(h_1)-k<\deg(g^c)-k$
we deduce that
$\deg(D^{(k)}_{e_1, \ldots, e_k} g^{c}) <\deg(g^c)-k$ if and only if
$\deg(D^{(k)}_{a^{(1)}, \ldots, a^{(k)}} f^{c})<\deg(g^c)-k$, which concludes the proof.
\hfill $\Box$\\

For the particular case $k=1$, Theorem~\ref{thm:dd-space-fast-space} gives:
\begin{corollary}\label{cor:deg-drop-fast-point}
    Let $f$ be a homogeneous Boolean function of degree $r$ in $n$ variables with $1\le r\le n-1$ and let $a\in\F_2^n\setminus \{\mathbf{0}\}$. The linear hyperplane defined by the equation $a\cdot x = 0$ is a degree-drop hyperplane for $f$ if and only if $a$ is a fast point for $f^c$. Consequently, the number of degree-drop linear hyperplanes of $f$ is equal to the number of fast points of $f^c$.
\end{corollary}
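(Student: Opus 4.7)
The plan is to deduce this as the immediate specialization of Theorem~\ref{thm:dd-space-fast-space} to the case $k=1$. First I would check that the hypotheses match: the bound $1\le k\le n-r$ becomes $1\le n-r$, which is exactly the assumption $r\le n-1$ of the corollary; and the requirement that the $k$ vectors be linearly independent is trivially satisfied for a single nonzero vector $a^{(1)}=a$. The linear subspace defined by the single equation $a\cdot x=0$ is precisely the linear hyperplane $H_a=\{x\in\F_2^n : a\cdot x=0\}$, which has co-dimension $1$.

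Next I would unwrap the conclusion of the theorem for $k=1$. The theorem tells us that $H_a$ is a degree-drop subspace for $f$ if and only if $\deg(D^{(1)}_{a}f^c)<\deg(f^c)-1$, that is, $\deg(D_a f^c)<\deg(f^c)-1$. Since $f^c$ is homogeneous of degree $n-r\ge 1$, this is exactly the condition defining $a$ as a fast point of $f^c$ (Definition from Duan's paper recalled in Section~\ref{2}). This settles the biconditional part.

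For the counting assertion, I would invoke the bijection between $\F_2^n\setminus\{\mathbf{0}\}$ and the set of linear hyperplanes of $\F_2^n$ given by $a\mapsto H_a$; over $\F_2$ this map is injective because the only nonzero scalar is $1$, and it is surjective by the standard description of linear hyperplanes as kernels of nonzero linear forms. Under this bijection, the equivalence established above identifies the set of fast points of $f^c$ with the set of degree-drop linear hyperplanes of $f$, so their cardinalities coincide.

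There is no real obstacle here: all the work has been done in Theorem~\ref{thm:dd-space-fast-space}, and the only remaining step is to observe that over $\F_2$ a single-equation linear system corresponds bijectively to its normal vector. Consequently the proof will only occupy a few lines.
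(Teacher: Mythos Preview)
Your proposal is correct and follows exactly the paper's approach: the corollary is simply the specialization of Theorem~\ref{thm:dd-space-fast-space} to $k=1$, and your verification of the hypotheses, the unwinding of the conclusion to the fast-point definition, and the bijection $a\mapsto H_a$ over $\F_2$ for the counting statement are all spelled out correctly.
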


In view of Corollary~\ref{cor:deg-drop-fast-point}, we can use Theorem~\ref{com.scon.ddhyp} to obtain a necessary condition for having no fast points:
\begin{corollary}\label{cor:typeB-fast-points}
 Let $f=\sum_{j=1}^pm_j$ be a homogenous $n$-variable
function of degree $r$ with $2\le r \le n-1$. If for all $i\in \var(f)$  there exists
a term $m_{j_i}$ in $f$ such that: \begin{itemize}
\item $i\in \var(m_{j_i})$
\item for all $t\not\in \var(m_{j_i})$, the monomial $\frac{x_t m_{j_i}}{x_i}$ does not appear in $f$,
\end{itemize}
then $f$ has no fast points.
\end{corollary}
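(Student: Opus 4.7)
The plan is to derive this corollary from Theorem~\ref{com.scon.ddhyp} applied to the complement $f^c$, and then invoke Corollary~\ref{cor:deg-drop-fast-point} to pass back from $f^c$ to $f$. Since Corollary~\ref{cor:deg-drop-fast-point} tells us that the number of fast points of $f$ equals the number of degree-drop linear hyperplanes of $f^c$, it suffices to show that $f^c$ satisfies the hypothesis of Theorem~\ref{com.scon.ddhyp}, whence $f^c$ has no degree-drop hyperplane.

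To carry out this reduction, I would use the bijective correspondence $m_j \leftrightarrow m_j^c$ between the monomials of $f$ and those of $f^c$, together with the identity $\var(m_j^c)=\{1,\dots,n\}\setminus \var(m_j)$. Given the witness monomial $m_{j_i}$ provided by the hypothesis on $f$, I take $\mu_{j_i}:=m_{j_i}^c$ as the corresponding witness monomial of $f^c$ and check the two conditions of Theorem~\ref{com.scon.ddhyp} term by term. First, $i\in \var(m_{j_i})$ is equivalent to $i\notin \var(\mu_{j_i})$. Second, the quantifier ``for all $t\notin \var(m_{j_i})$'' is the same as ``for all $t\in \var(\mu_{j_i})$'', and the monomial $\frac{x_i\mu_{j_i}}{x_t}$ of degree $n-r$ is precisely the set-theoretic complement of $\frac{x_t m_{j_i}}{x_i}$ of degree $r$; hence the former appears in $f^c$ if and only if the latter appears in $f$, and the hypothesis of the corollary guarantees that neither does.

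The main bookkeeping point, which needs to be handled carefully, is to align the range of the quantifier over $i$: Theorem~\ref{com.scon.ddhyp} applied to $f^c$ requires the condition to hold for all $i\in \var(f^c)$, whereas the corollary's hypothesis ranges over $i\in \var(f)$. These sets coincide when $\var(f)=\{1,\dots,n\}$; if some $x_i$ fails to appear in the ANF of $f$ then $D_{e_i}f=0$ and $e_i$ is automatically a fast point of $f$, so the natural and intended regime for the corollary is that of functions depending on all their variables. Once this correspondence is set up, Theorem~\ref{com.scon.ddhyp} yields that $f^c$ has no degree-drop hyperplane, and Corollary~\ref{cor:deg-drop-fast-point} then gives that $f$ has no fast points, as claimed.
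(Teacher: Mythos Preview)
Your approach is correct and is exactly the one the paper intends: translate the hypotheses on $f$ into the hypotheses of Theorem~\ref{com.scon.ddhyp} for $f^c$ via the monomial complement bijection, and then invoke Corollary~\ref{cor:deg-drop-fast-point}. One small imprecision: the sets $\var(f)$ and $\var(f^c)$ need not coincide even when $\var(f)=\{1,\dots,n\}$ (e.g.\ $f=x_1x_2+x_1x_3$ in $3$ variables), but all you actually need is the trivial inclusion $\var(f^c)\subseteq\{1,\dots,n\}=\var(f)$, so your argument goes through; your observation that the statement implicitly assumes $\var(f)=\{1,\dots,n\}$ (else $e_i$ is a fast point for any missing variable) is a valid caveat that the paper does not spell out.
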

The open problem~\ref{open-problem} is therefore equivalent to:
\begin{open}
  Are all the functions which have no fast points affine equivalent to functions which satisfy the condition of Corollary~\ref{cor:typeB-fast-points}?
\end{open}

We will now move to the problem of counting the number of functions without degree-drop hyperplanes.

Recall that the Gaussian $q$-binomial coefficients are defined for any $q>1$ as
\[
\SqBinom{n}{k}_q = \frac{\prod_{i=n-k+1}^{n} (q^i-1)}{\prod_{i=1}^{k} (q^i-1)}
\]
and the number of vector subspaces of $\F_2^n$ of dimension $k$ is $\SqBinom{n}{k}_2$.


We are now ready to compute the cardinality of $K_{1,r,n}$. 
\begin{theorem}\label{thm:K1-explicit-formula}
 Let $r,n$ be integers such that $1\le r \le n$.
 The number of homogeneous functions of degree $r$ in $n$ variables which do not have any degree-drop hyperplane is
 \begin{equation}\label{eq:K1-explicit-formula}
 |K_{1,r,n} | =  \sum_{i=0}^{r}(-1)^i 2^{\frac{i(i-1)}{2}}\SqBinom{n}{i}_2 \left( 2^{\binom{n-i}{r-i}}-1\right)
\end{equation}
where  $\SqBinom{n}{i}_2$ denotes the 2-Gaussian binomial coefficient.

   For any $j$ with $0\le j\le r$, the number of homogeneous functions of degree $r$ in $n$ variables which have exactly $2^j-1$ degree-drop linear hyperplanes  is equal to
   \begin{equation}\label{eq:explicit-formula-function-dd-hyperpl}
\SqBinom{n}{j}_2 \sum_{i=0}^{r-j}(-1)^i 2^{\frac{i(i-1)}{2}}\SqBinom{n-j}{i}_2 \left( 2^{\binom{n-j-i}{r-j-i}}-1\right).
\end{equation}
In particular, there are no functions with exactly $2^{r-1}-1$ degree-drop hyperplanes.

The number of (not necessarily homogeneous) functions of degree $r$ in $n$ variables which have $2^j-1$ degree-drop linear hyperplanes (respectively no degree-drop hyperplanes) is equal to the quantity in~\eqref{eq:explicit-formula-function-dd-hyperpl} (repectively~\eqref{eq:K1-explicit-formula}) multiplied by $2^{\sum_{\ell=0}^{r-1}\binom{n}{\ell}} $.
\end{theorem}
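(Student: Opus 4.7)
My plan is to derive the general formula~(\ref{eq:explicit-formula-function-dd-hyperpl}) directly by Möbius inversion on the lattice $L_n$ of subspaces of $\F_2^n$, and then recover~(\ref{eq:K1-explicit-formula}) as the $j=0$ specialization. To each nonzero homogeneous $f$ of degree $r$ I would associate its \emph{degree-drop subspace}
\[
D(f) := \{\mathbf{0}\} \cup \{a \in \F_2^n\setminus\{\mathbf{0}\} : \{x: a^T x = 0\} \text{ is a degree-drop hyperplane of } f\},
\]
which is a vector subspace of $\F_2^n$ by Proposition~\ref{vshyp}. Hence the number of degree-drop linear hyperplanes of $f$ equals $2^{\dim D(f)} - 1$, and counting homogeneous functions with exactly $2^j - 1$ such hyperplanes reduces to counting those with $\dim D(f) = j$.

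For a subspace $V \subseteq \F_2^n$, let $A(V)$ be the number of nonzero homogeneous polynomials $f$ of degree $r$ with $V \subseteq D(f)$, and $B(V)$ the number with $D(f) = V$ exactly. By Lemma~\ref{lem:affine-invariance}, both depend only on $\dim V$. To compute $A(V)$ for $\dim V = j$, I would take the standard representative $V_j = \langle e_1, \ldots, e_j\rangle$. By Lemma~\ref{lem:bits-and-bobs}(ii) applied to each basis vector, $V_j \subseteq D(f)$ if and only if every monomial of $f$ is divisible simultaneously by $x_1, x_2, \ldots, x_j$, i.e.\ by $x_1 x_2 \cdots x_j$; equivalently, $f = x_1 \cdots x_j \cdot g$ for some homogeneous $g$ of degree $r-j$ in $x_{j+1}, \ldots, x_n$. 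This gives $A(V_j) = 2^{\binom{n-j}{r-j}} - 1$, the $-1$ excluding $g=0$.

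Next I would invoke Möbius inversion on $L_n$, using the classical Möbius function $\mu(V, W) = (-1)^{\dim W - \dim V}\, 2^{\binom{\dim W - \dim V}{2}}$ for $V\subseteq W$. The identity $A(V) = \sum_{W \supseteq V} B(W)$ inverts to $B(V) = \sum_{W \supseteq V} \mu(V, W)\, A(W)$. The subspaces $W \supseteq V_j$ of dimension $j+i$ are in bijection with $i$-dimensional subspaces of the quotient $\F_2^n / V_j \cong \F_2^{n-j}$, so there are $\SqBinom{n-j}{i}_2$ of them, each contributing $A(W) = 2^{\binom{n-j-i}{r-j-i}} - 1$. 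Multiplying $B(V_j)$ by the number $\SqBinom{n}{j}_2$ of $j$-dimensional subspaces of $\F_2^n$ yields exactly~(\ref{eq:explicit-formula-function-dd-hyperpl}); the case $j=0$ gives~(\ref{eq:K1-explicit-formula}).

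The claim that no function has exactly $2^{r-1}-1$ degree-drop hyperplanes follows either by substituting $j = r-1$ in~(\ref{eq:explicit-formula-function-dd-hyperpl}), where the two surviving terms $(2^{n-r+1}-1) - \SqBinom{n-r+1}{1}_2 \cdot 1$ cancel, or from Corollary~\ref{cor:function-with-given-number-of-hyperplanes}: such a function would be $\sim_{r-1}$-equivalent to $x_1 \cdots x_{r-1}\, g$ for some homogeneous degree-$1$ polynomial $g$ in $n-r+1$ variables with no degree-drop hyperplane, contradicting Lemma~\ref{lem:bits-and-bobs}(vii). Finally, for the non-homogeneous count, whether $f$ has a degree-drop hyperplane depends only on its top-degree homogeneous part (as noted just after the definition of degree-drop space), so we multiply the homogeneous count by the number $2^{\sum_{\ell=0}^{r-1}\binom{n}{\ell}}$ of polynomials of degree $<r$. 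The most delicate point I anticipate is the careful identification of the Möbius function of the subspace lattice together with a consistent bookkeeping of the zero polynomial; once these are in place, the rest is a routine counting exercise.
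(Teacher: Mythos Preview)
Your proof is correct and takes a genuinely different route from the paper's. The paper does not carry out the counting directly: instead, it invokes Corollary~\ref{cor:deg-drop-fast-point} to transfer the problem to counting functions of degree $n-r$ having a prescribed number of fast points, and then simply quotes the formulas from~\cite[Theorem~6, Corollary~3]{SalMan17}. Your argument, by contrast, is self-contained within the paper: you use Proposition~\ref{vshyp} to attach a subspace $D(f)$ to each $f$, compute the ``at least'' counts $A(V)$ via Lemma~\ref{lem:bits-and-bobs}(ii) and Corollary~\ref{cor:function-with-given-number-of-hyperplanes}, and then M\"obius-invert over the subspace lattice. This is essentially the argument that underlies the cited results in~\cite{SalMan17} (applied there to fast points), so you are effectively reproving that external input in the present language rather than importing it. The advantage of your approach is that it avoids any dependence on Section~\ref{sec:cardofK1rn}'s complement/fast-point machinery and on the external reference; the advantage of the paper's approach is brevity and the explicit bridge to fast points, which is of independent interest. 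Your handling of the boundary cases (the exclusion of the zero polynomial via the $-1$, the vanishing at $j=r-1$, and the passage to non-homogeneous functions) is correct.
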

{\em Proof }
If $r=n$, using the $q$-analogue of the binomial theorem, the sum on the right hand side of \eqref{eq:K1-explicit-formula} equals $0$, and we know from Lemma~\ref{lem:bits-and-bobs}(vi) that indeed $K_{1,n,n} = \emptyset$, so~\eqref{eq:K1-explicit-formula} holds. For the rest of the proof we assume $r<n$.

By Corollary~\ref{cor:deg-drop-fast-point}, we know that the number of elements in  $RM(r,n)/RM(r-1,n)$ which have $2^j-1$ linear degree-drop hyperplanes (in particular, for $j=0$, no hyperplanes) is equal to
the number of elements in $RM(n-r,n)/RM(n-r-1,n)$ which have $2^j-1$ fast points (in particular,  for $j=0$, no fast points). These were computed in~\cite[Theorem~6, Corollary~3]{SalMan17}, and give the results in the statement of the theorem.
\hfill $\Box$

Using Theorem~\ref{thm:K1-explicit-formula} we can examine the probability that a function $f$ of degree $r$ in $n$ variables has a degree-drop hyperplane, which is
\begin{equation}\label{eq:prob}
\frac{2^{\binom{n}{r}}-1 - |K_{1,r,n}|}{2^{\binom{n}{r}}-1} = \sum_{i=1}^{r}(-1)^{i-1} 2^{\frac{i(i-1)}{2}}\SqBinom{n}{i}_2 \frac{ 2^{\binom{n-i}{r-i}}-1}{2^{\binom{n}{r}}-1}.
\end{equation}
For degree $r=2$, this can be computed easily as:
\[
\frac{(2^n-1)(2^{n-1}-1)}{3\left( 2^{\binom{n}{2}}-1\right)}
\]
which has the limit 0 as $n$ tends to infinity.

When $3\le r \le n-3$, as in~\cite[Section~8]{SalMan17}, one can verify that the terms in the sum \eqref{eq:prob} above have alternating signs and decrease rapidly in absolute value. Therefore the probability above can be upper and lower bounded by the first term, respectively first two terms in the sum:
\[
\SqBinom{n}{1}_2\frac{2^{\binom{n-1}{r-1}} -1}{2^{\binom{n}{r}} -1} - 2\SqBinom{n}{2}_2 \frac{2^{\binom{n-2}{r-2}} -1}{2^{\binom{n}{r}} -1}
\le \frac{2^{\binom{n}{r}}-1 - |K_{1,r,n}|}{2^{\binom{n}{r}}-1} \le 
(2^n-1)\frac{2^{\binom{n-1}{r-1}} -1}{2^{\binom{n}{r}} -1}.
\]
The upper bound has the limit 0 as $n$ tends to infinity (regardless of the value of $r$, as long as $3\le r\le n-3$) which can be seen more clearly by approximating it as
\begin{equation}\label{eq:approx-probability}
\frac{(2^n-1)\left(2^{\binom{n-1}{r-1}} -1 \right)}{2^{\binom{n}{r}}-1}\approx \frac{2^{n+\binom{n-1}{r-1}}}{2^{\binom{n}{r}}} = \frac{1}{2^{\binom{n-1}{r}-n}}.
\end{equation}
The probability is already very low for values of $n$ used in practical applications, as illustrated in the following example.

\begin{example}\label{ex1}
  For functions in 7 variables, using
Theorem~\ref{thm:K1-explicit-formula} we obtain that there are $|K_{1,3,7} | = 34355647824$ homogeneous functions of algebraic degree 3 which have no degree-drop hyperplanes.
This gives a probability of $0.00011905$ that a function of degree 3 in 7 variables has a degree-drop hyperplane (the approximation \eqref{eq:approx-probability} of an upper bound for the probability would give 0.00012207).
Using again Theorem~\ref{thm:K1-explicit-formula}, we see that the number of homogeneous functions of degree 3 in 7 variables with 1,3 or 7 degree-drop spaces is
4078732, 0 and 11811 respectively. For algebraic degree 4 we have $|K_{1,4,7} | =
34231364608$,
which gives a probability of $0.00373617$ that a function of degree 4 in 7 variables has a degree-drop hyperplane (the approximation \eqref{eq:approx-probability} of an upper bound for it would give 0.00390625). The number of homogeneous functions of degree 4 in 7 variables with 1,3, 7 or 15 degree-drop spaces is 126046992, 2314956, 0 and 11811 respectively.
\end{example}



\section{Connections to other invariants and parameters}\label{sec:connections-inv} 
We study
a characterisation of the degree-drop spaces $A$ of $f$ in terms of the indicator function of $A$, then we examine a connection with an affine invariant used by~\cite{lang}.

 Let us recall that for any
set $A\subseteq\Bbb{F}_2^n$  the function  $1_A$, called the
indicator of $A$,  is the Boolean function such that $1_A(x)=1$ if
and only if $x\in A$. If $A$ is an affine space of co-dimension $k$ defined by the equations $a_1(x)=0, \ldots, a_k(x)=0$ with $a_i$ affine functions, then $1_A(x)= \prod_{i=1}^{k}(a_i(x) +1)$ and $\deg(1_A)=k$.
We shall first need the next
lemma, which as far as we know, has never been explicitly stated in
a paper, while it is rather basic.
\begin{lemma}\label{lem:indicator}
Let $f$ be an $n$-variable Boolean function. Let $A$ be an affine subspace of $\mathbb{F}_2^n$ and $1_A$ its indicator function.
Then we have
$$\deg (f 1_A)=\deg(f_{\mid A})+\deg(1_A).$$
\end{lemma}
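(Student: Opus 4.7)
The plan is to reduce to a convenient coordinate system, observe that the indicator function annihilates any monomial containing a variable forced to zero, and then read off the degree from a product of polynomials in disjoint sets of variables.

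First I would argue that it suffices to consider the case when $A$ is the linear subspace defined by $x_1 = x_2 = \cdots = x_k = 0$. Indeed, if $\varphi$ is an affine automorphism of $\F_2^n$, then $(f\cdot 1_A)\circ\varphi = (f\circ\varphi)\cdot 1_{\varphi^{-1}(A)}$, and both $\deg(f \cdot 1_A)$ and $\deg(1_A)$ are affine invariants; furthermore, as observed just before Section~\ref{sec:defs}, $\deg((f\circ\varphi)_{\mid \varphi^{-1}(A)}) = \deg(f_{\mid A})$. Hence we may apply a translation to reduce $A$ to a linear subspace $E$ of $\F_2^n$, and then a linear change of coordinates to arrange that $E = \{x \in \F_2^n : x_1 = \cdots = x_k = 0\}$, without affecting any of the three quantities in the statement.

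Next I would use the identity $x_i(1+x_i) = 0$ in the quotient ring $\F_2[x_1,\dots,x_n]/(x_i^2+x_i)$. In our reduced coordinates, $1_A(x) = \prod_{i=1}^{k}(1+x_i)$, so for every $i \in \{1,\dots,k\}$ we have $x_i \cdot 1_A = 0$. Writing the ANF of $f$ as $f = \sum_{J \subseteq \{1,\dots,n\}} a_J \prod_{j\in J} x_j$, every monomial whose index set $J$ meets $\{1,\dots,k\}$ is annihilated when multiplied by $1_A$. Therefore
\[
f \cdot 1_A \;=\; \Bigl(\sum_{J \cap \{1,\dots,k\}=\emptyset} a_J \prod_{j\in J} x_j\Bigr)\cdot 1_A \;=\; f_{\mid A}\cdot 1_A,
\]
where on the right $f_{\mid A}$ is regarded as a polynomial in the variables $x_{k+1},\dots,x_n$ only (this is exactly the ANF of $f_{\mid A}$ obtained by substituting $x_1 = \cdots = x_k = 0$).

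Finally I would conclude by a degree argument for a product of polynomials in disjoint variable sets. The factor $1_A = \prod_{i=1}^{k}(1+x_i)$ involves only $x_1,\dots,x_k$ and has leading monomial $x_1 x_2 \cdots x_k$ of degree $k = \deg(1_A)$; the factor $f_{\mid A}$ involves only $x_{k+1},\dots,x_n$ and has some leading monomial of degree $\deg(f_{\mid A})$. The product of these two leading monomials cannot be cancelled by any other contribution (the variable sets are disjoint, so no reduction via $x_i^2 = x_i$ occurs and no other pair of monomials produces the same product). Hence $\deg(f \cdot 1_A) = \deg(f_{\mid A}) + \deg(1_A)$, as required. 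There is no real obstacle here: the whole argument hinges on the annihilation identity $x_i \cdot 1_A = 0$, which makes the reduction to a product of polynomials in disjoint variables essentially automatic once the coordinates have been chosen.
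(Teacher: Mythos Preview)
Your proof is correct and follows essentially the same approach as the paper: reduce by an affine change of coordinates to a standard form for $A$, then observe that $f\cdot 1_A$ equals $f_{\mid A}\cdot 1_A$ with the two factors depending on disjoint sets of variables, whence the degree formula. The only cosmetic difference is that the paper normalises $A$ to $\{x_{n-k+1}=\cdots=x_n=1\}$, which makes $1_A$ a single monomial and renders the identity $f\cdot 1_A = f_{\mid A}\cdot 1_A$ immediate without the annihilation trick $x_i(1+x_i)=0$; your choice $\{x_1=\cdots=x_k=0\}$ is equally valid but needs that extra step. One small point: your final degree argument tacitly assumes $f_{\mid A}\neq 0$ (you speak of its leading monomial); the paper handles this case explicitly via the convention $\deg(0)=-\infty$, and you should too.
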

{\em Proof}. Since the algebraic degree is an affine invariant, we
can assume without loss of generality that $A$ is defined by the equations
$x_{n-k+1}=\dots =x_n=1$. Then $1_A(x) = \prod_{i=n-k+1}^nx_i$. The expression of the ANF of $f_{\mid A}$
is obtained from the ANF of $f$ by substituting $x_i$ by 1 for each
$i=n-k+1,\dots, n$. This same expression, when multiplied by $\prod_{i=n-k+1}^nx_i$, gives the ANF of $(f 1_A)(x)$ (since $(f 1_A)(x) =f(x)$ for any $x \in A$ and $(f 1_A)(x) =0$ for any $x\not \in A$).

 Note that $f 1_A$ is  the identically zero function if and only if $f_{\mid A}$ is the identically zero function. Hence, if $f 1_A$ (and therefore $f_{\mid A}$) are identically zero, the equality in the theorem statement follows, recalling that  $\deg(0)=-\infty$ by convention (as defined  in Section~\ref{2}).

Now assume that $f 1_A$ is not the identically zero function.  Note that the ANF of $f_{\mid A}$ and the ANF of $1_A$ have no variables in common, which means that the degree of their product  is the sum of their degrees, i.e.\ $\deg (f 1_A)=\deg(f_{\mid A})+\deg(1_A)$.
This completes the proof.\hfill $\Box$

\begin{proposition}\label{prop:indicator-deg-drop}
For all positive integers $n, r$ and $k$ such that $n\geq k$ and
$1\leq r\leq n-k$, the elements of $K_{k,r,n}$ are those $f$ of
algebraic degree $r$ such that, for every affine space $A$ of
co-dimension $k$, the $n$-variable Boolean function equal to the
product  of $f$ with the indicator $1_A$ of $A$ satisfies:
$$\deg(f1_A)=r+k.$$
\end{proposition}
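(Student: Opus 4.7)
The plan is to obtain this as an almost immediate consequence of Lemma~\ref{lem:indicator}, once we tie together three facts: the degree of the indicator $1_A$, the definition of $K_{k,r,n}$, and the general inequality $\deg(f_{\mid A}) \le \deg(f)$.

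First I would recall that if $A$ is an affine subspace of codimension $k$, then $A$ is the zero set of $k$ linearly independent affine functions $a_1, \ldots, a_k$, so $1_A(x) = \prod_{i=1}^{k}(a_i(x)+1)$; the $a_i$ being linearly independent, this product is a polynomial of degree exactly $k$. In particular $\deg(1_A) = k$.

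Next I would invoke Lemma~\ref{lem:indicator} to write $\deg(f\, 1_A) = \deg(f_{\mid A}) + \deg(1_A) = \deg(f_{\mid A}) + k$ for every affine space $A$ of codimension $k$. Now by the definition given at the start of Section~\ref{sec:defs} and in the notation following it, $f \in K_{k,r,n}$ means that $f$ is a nonzero element of $RM(r,n)/RM(r-1,n)$ (so $\deg(f)=r$) with the property that $\deg(f_{\mid A}) = r$ for every affine space $A$ of codimension $k$. Substituting in the identity above, this is equivalent to $\deg(f\, 1_A) = r+k$ for every such $A$.

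There is really no obstacle: the two directions follow from the same equality. The only point worth commenting on is the boundary case when $f\, 1_A$ (equivalently $f_{\mid A}$) is identically zero, where the convention $\deg(0) = -\infty$ ensures the equality of Lemma~\ref{lem:indicator} still holds and the equivalence goes through. Note also that, since $\deg(f_{\mid A}) \le \deg(f) = r$, we have $\deg(f\, 1_A) \le r + k$ always, so the characterization may equivalently be stated as $\deg(f\, 1_A) \ge r + k$ for every affine space of codimension $k$.
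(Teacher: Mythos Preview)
Your proof is correct and follows essentially the same approach as the paper: both derive the proposition directly from Lemma~\ref{lem:indicator} together with the fact that $\deg(1_A)=k$. The paper phrases the edge case by noting that $f1_A\neq 0$ necessarily holds in both directions, whereas you handle it via the convention $\deg(0)=-\infty$; these are equivalent ways of dealing with the same point.
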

Indeed, we necessarily have $f1_A\neq 0$ in both cases $f\in K_{k,r,n}$ 
and $\deg (f 1_A)=r+k$, and assuming this condition satisfied, Proposition \ref{prop:indicator-deg-drop} is a direct consequence of Lemma \ref{lem:indicator} and of the fact that $1_A$ has algebraic degree $k$.\\

Hence, the functions with no degree-drop space of co-dimension $k$ are those nonzero $n$-variable Boolean
functions $f$ which, for all spaces $A$ of co-dimension $k$, satisfy $\deg (f 1_A)=\deg(f)+\deg(1_A)$.
Note that a necessary
condition for that is the existence, in the ANF of $f$, of a highest
degree monomial $m$, and in the ANF of $1_A$, of a highest degree
monomial $m'$, such that $m$ and $m'$ are made of disjoint sets of
variables. This condition is not sufficient (take $1_A(x)=x_1+x_2$
and $f(x)=x_1+x_2+1$ for instance).

 In \cite{lang}, one of the affine invariants used in the classification of $RM(r,n)/RM(r-1,n)$ under $\sim_{r-1}$ is $\mathfrak{R}_k$.
 We recall its definition here and then examine its connection to the problem we have been studying.

 The set of homogeneous polynomials of algebraic degree $r$ in $n$ variables over $\F_2$ (more precisely, $RM(r,n)/RM(r-1,n)$,
 with a homogenous polynomial as representative for each equivalence class) can be viewed as a vector space over $\F_2$ of
  dimension $\binom{n}{r}$, with each polynomial identified  with the vector of its coefficients (assuming some order has
  been fixed on the monomials).
  Fixing an element $f\in RM(r,n)/RM(r-1,n)$ and an integer $1\le k \le n-r$, consider
  $\Phi_{f}: RM(k,n)/RM(k-1,n) \rightarrow RM(r+k,n)/RM(r+k-1,n)$ defined as $\Phi_{f}(g)$ being the class of $gf$ in $RM(r+k,n)/RM(r+k-1,n)$. Note that $\Phi_{f}(g) = 0$ if and only if $\deg(gf)<r+k$. It is easy to check that $\Phi_{f}$ is $\F_2$-linear, so $\ker(\Phi_f)$ is a vector space.
  The invariant $\mathfrak{R}_k(f)$ is defined as the dimension of that vector space, i.e.\ $\mathfrak{R}_k(f) = \dim(\ker(\Phi_{f}))$. It can be verified that
  this is indeed an invariant with respect to $\sim_{r-1}$, that is, if $f\sim_{r-1} h$ then $\mathfrak{R}_k(f) = \mathfrak{R}_k(h)$, for all $k = 1, 2, \ldots, n-r$.

 We now examine the connection between the invariant $\mathfrak{R}_k (f)$ and
 the existence of degree-drop spaces of co-dimension $k$ for $f$. We first consider hyperplanes.

 \begin{theorem}\label{thm:R1-connection}
   Let $f$ be a homogeneous Boolean function of algebraic degree $r$.
   Let $V$ be the vector space of maximal dimension such that $H_a = \{x \in \F_2^n : a \cdot x =0\}$ is a degree-drop hyperplane for $f$ for all $a\in V\setminus\{\mathbf{0}\}$. Then $\dim(V) = \mathfrak{R}_1 (f)$.
   In particular $f$ has no degree-drop hyperplanes if and only if $\mathfrak{R}_1 (f)=0$.
 \end{theorem}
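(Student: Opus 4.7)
The plan is to express everything in terms of the indicator function and then use Lemma~\ref{lem:indicator} to reduce the statement to an equivalent one about $\ker(\Phi_f)$. First I would note that for any nonzero $a\in \F_2^n$, the indicator of the linear hyperplane $H_a=\{x: a\cdot x=0\}$ is $1_{H_a}(x)= a\cdot x + 1$, so $\deg(1_{H_a})=1$. Applying Lemma~\ref{lem:indicator} gives
\[
\deg(f\cdot 1_{H_a}) = \deg(f_{\mid H_a}) + 1,
\]
so $H_a$ is a degree-drop hyperplane of $f$ if and only if $\deg(f\cdot 1_{H_a})\le r$, where $r=\deg(f)$.

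Next I would expand $f\cdot 1_{H_a} = f\cdot(a\cdot x) + f$. Since $f$ is homogeneous of degree exactly $r$, the sum has degree $\le r$ if and only if $f\cdot(a\cdot x)$ has degree $\le r$, which is precisely the condition $\Phi_f(a\cdot x)=0$ in $RM(r+1,n)/RM(r,n)$. Therefore
\[
H_a \text{ is a degree-drop hyperplane of }f \iff a\cdot x \in \ker(\Phi_f).
\]

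The map $\F_2^n \to RM(1,n)/RM(0,n)$ sending $a\mapsto a\cdot x$ is an $\F_2$-linear isomorphism. Under this isomorphism, the set $V\cup\{\mathbf{0}\}$ (which is a vector space by Proposition~\ref{vshyp}) corresponds bijectively and linearly to $\ker(\Phi_f)$. Hence $\dim(V)=\dim(\ker(\Phi_f))=\mathfrak{R}_1(f)$, which yields both parts of the statement (invoking Lemma~\ref{lem:affine-vs-linear-dd-space} to see that ``no degree-drop hyperplanes'' is equivalent to ``no degree-drop \emph{linear} hyperplanes'', so the case $\mathfrak{R}_1(f)=0$ really covers all affine hyperplanes).

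I do not foresee a serious obstacle here, since all the main ingredients (the indicator-function degree identity, the vector-space structure of the degree-drop directions, and the definition of $\Phi_f$) are already established. The only delicate point is the observation in the second step that homogeneity of $f$ allows us to discard the additive term $f$ when bounding the degree of $f\cdot 1_{H_a}$; this is where one must be careful to keep $f$ homogeneous of degree $r$, which is assumed in the statement.
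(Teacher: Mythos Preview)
Your proof is correct and, in fact, a little slicker than the paper's. The paper proceeds by a two-sided inequality: it normalises (via Corollary~\ref{cor:function-with-given-number-of-hyperplanes}) so that $V=\langle e_1,\dots ,e_d\rangle$ and $f=x_1\cdots x_d\,h(x_{d+1},\dots ,x_n)$, observes that then $x_if=f$ for $i\le d$, which yields $d\le \mathfrak{R}_1(f)$; then it normalises the other way so that $\ker(\Phi_f)=\langle x_1,\dots ,x_t\rangle$, notes that $\deg(x_if)\le r$ forces $x_i$ to divide every monomial of $f$, hence $\{x_i=0\}$ is degree-drop and $t\le d$. Your approach instead establishes the direct bijection
\[
\{a: H_a \text{ degree-drop}\}\cup\{\mathbf{0}\}\ \longleftrightarrow\ \ker(\Phi_f),\qquad a\mapsto a\cdot x,
\]
in one shot via Lemma~\ref{lem:indicator}, avoiding both normalisations and the appeal to Corollary~\ref{cor:function-with-given-number-of-hyperplanes}. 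A side benefit is that your argument makes transparent why the $k=1$ case is special: every nonzero element of $RM(1,n)/RM(0,n)$ is (up to a constant) the indicator of a linear hyperplane, so $\ker(\Phi_f)$ is entirely accounted for by hyperplane indicators; for $k\ge 2$ this is no longer the case, which is precisely why the converse of Theorem~\ref{thm:R2} fails (cf.\ Example~\ref{counterexample}). Two cosmetic remarks: $V$ already contains $\mathbf{0}$ by definition, so ``$V\cup\{\mathbf{0}\}$'' is redundant; and your ``delicate point'' in step two only requires $\deg(f)=r$, not full homogeneity of $f$.
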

  {\em Proof}. Put $d=\dim(V)$ and $t=\mathfrak{R}_1(f)= \dim(\ker(\Phi_{f}))$.
  After a suitable invertible affine transformation, we can assume that $V$ is the space generated by the vectors $e_1, \ldots, e_d$.
 By Corollary~\ref{cor:function-with-given-number-of-hyperplanes}, $f(x_1, \ldots, x_n)=x_1 \ldots x_d h(x_{d+1}, \ldots, x_n)$ with $h$ homogeneous  of
  algebraic degree $r-d$ in $n-d$ variables.
%
  We note that $x_if=f$ for all $i=1, \ldots, d$, so for any of the $2^d$ linear functions $g$ that contain only variables from $\{x_1, \ldots, x_d\}$ we have that $gf$ is
  either null or $gf = f$. In either case,  $\deg(gf)<r+1$ and therefore $g\in \ker(\Phi_{f})$. This means  $t= \dim(\ker(\Phi_{f}))\ge d$.

  For the reverse inequality, again after a suitable invertible affine transformation we can assume that $\ker(\Phi_f)$ is generated by the $t$ elements $x_1, \ldots, x_t$. In other words, $\deg(x_if)<r+1$ for all $i=1, \ldots, t$. Note that $\deg(x_if)<r+1$ if and only if $x_i$ appears in every monomial of $f$. But on the other hand that means that for all $i=1, \ldots, t$
  the hyperplane defined by the equation $x_i=0$ is a degree-drop hyperplane of $f$. Therefore $e_1, \ldots, e_t\in V$, so   $t\le \dim(V) =d$, concluding the proof.
 \hfill $\Box$
\begin{example}\label{ex:8varR1}
 For $n=8$ and $r=4$ we computed the number of polynomials which have degree-drop hyperplanes, i.e.
 $2^{\binom{8}{4}}-1 - |K_{1,4,8} |$ using Theorem~\ref{eq:K1-explicit-formula} and obtained 8761037088127.
 Using the file with the classification of $RM(4,8)/RM(3,8)$ available from~\cite{lang}
 we selected those classes for which $\mathfrak{R}_1 (f)>0$ (note that in the file provided in~\cite{lang}, $n- \mathfrak{R}_1 (f)$ is the first number after the ``$R:$'')
 and concluded that there are 11 classes with that property (8 of them with $\mathfrak{R}_1 (f)=1$, two with $\mathfrak{R}_1 (f)=2$ and one with $\mathfrak{R}_1 (f)=4$).
 Adding the number of elements in each of these classes (denoted by ``Orb'' in the file) we obtained the same value 8761037088127 as above.
 Note that each of these 11 representatives has the property that there is at least one variable which appears in all the monomials;
 in other words, Condition \eqref{c1} is not satisfied, so it is obvious that they have degree-drop hyperplanes.
\end{example}

 Next we consider spaces of higher co-dimension.
  \begin{theorem}\label{thm:R2}
   Let $f$ be a Boolean function of algebraic degree $r$ in $n$ variables  and let  $1\le k \le n-r$. If $f$ has a degree-drop space $A$  of co-dimension $k$, then $\mathfrak{R}_k (f)>0$.
 \end{theorem}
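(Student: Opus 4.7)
The plan is to construct an explicit nonzero element $g$ of $\ker(\Phi_f)\subseteq RM(k,n)/RM(k-1,n)$; this immediately yields $\mathfrak{R}_k(f) = \dim\ker(\Phi_f) \ge 1 > 0$. The natural candidate is the top-degree homogeneous component of the indicator function $1_A$ of the degree-drop space $A$.

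First I would apply Lemma~\ref{lem:indicator} to $A$. Since $A$ has co-dimension $k$ we have $\deg(1_A)=k$, and since $A$ is a degree-drop space, $\deg(f_{\mid A})<r$, hence
\[
\deg(f\cdot 1_A) \;=\; \deg(f_{\mid A}) + \deg(1_A) \;<\; r+k.
\]
Writing $A$ as the solution set of $k$ linearly independent affine equations $a_i(x)=\ell_i(x)+c_i=0$ (with $\ell_i$ linear and $c_i\in\F_2$), we have $1_A = \prod_{i=1}^{k}(a_i+1)$. Let $g = \prod_{i=1}^{k}\ell_i$ be its degree-$k$ part and write $1_A = g+h$ with $\deg(h)\le k-1$. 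Then $gf = (1_A)f - hf$, and
\[
\deg(hf) \;\le\; \deg(h)+\deg(f) \;\le\; (k-1)+r \;<\; r+k,
\]
so $\deg(gf) \le \max\bigl(\deg((1_A)f),\deg(hf)\bigr) < r+k$, which means $\Phi_f(g)=0$ in $RM(r+k,n)/RM(r+k-1,n)$.

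The one delicate point, which I expect to be the main (but minor) obstacle, is checking that $g$ represents a nonzero class in $RM(k,n)/RM(k-1,n)$, equivalently that $\prod_{i=1}^{k}\ell_i$ has algebraic degree exactly $k$. This reduces to verifying that $\ell_1,\ldots,\ell_k$ are $\F_2$-linearly independent: otherwise some nontrivial combination $\sum\lambda_i\ell_i$ vanishes, making $\sum\lambda_i a_i$ a constant, which either contradicts the linear independence of the $a_i$ (constant $0$) or forces $A$ to be empty (constant $1$). Once independence is secured, an invertible linear change of variables sends $g$ to $x_1\cdots x_k$, which is manifestly nonzero of degree $k$. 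Thus $[g]$ is a nonzero element of $\ker(\Phi_f)$, finishing the argument.
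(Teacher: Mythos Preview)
Your proposal is correct and takes essentially the same approach as the paper: both exhibit the class of $1_A$ in $RM(k,n)/RM(k-1,n)$ as a nonzero element of $\ker(\Phi_f)$, using Lemma~\ref{lem:indicator} (via Proposition~\ref{prop:indicator-deg-drop}) to get $\deg(f\,1_A)<r+k$. The paper's proof is one line because it takes for granted the fact $\deg(1_A)=k$ stated just before Lemma~\ref{lem:indicator}; your step~4 is simply a careful re-verification of that fact (and indeed the linear independence of the $\ell_i$ is already implicit in the definition of a co-dimension-$k$ affine space, so the empty-$A$ branch cannot occur).
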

  {\em Proof.} By Proposition~\ref{prop:indicator-deg-drop}, $\deg(f 1_A)<r-k$, so $1_A \in  \ker(\Phi_f)$. Therefore $\mathfrak{R}_k (f) = \dim(\ker(\Phi_f))\ge 1$.
 \hfill $\Box$

 The converse of Theorem~\ref{thm:R2} above is not true. To understand this, note $\ker(\Phi_f)$ could contain non-zero elements (and therefore be of dimension greater than zero) without containing any indicator function of an affine space, and therefore (in light of Proposition~\ref{prop:indicator-deg-drop}) without having any degree-drop space of co-dimension $k$. This situation does indeed occur, as the following example shows:
\begin{example}\label{counterexample}
Consider the following polynomial from the classification
of~\cite{lang}, with 3456 meaning $x_3x_4x_5x_6$;
\[f=3456+2357+1457+1267+1238+1358+1458+2468+1378+3478\]
According to~\cite{lang}, it has $\mathfrak{R}_1 (f)=0$ and
$\mathfrak{R}_2 (f)=2$. However, we checked by computer that $f$
does not have any degree-drop space of co-dimension 2 or less.
\end{example}

\section{Experimental results}\label{sec:experiments}
We have computed the number of degree-drop spaces for all the functions in $n=8$ variables of degrees $3\le r \le n-3$ (the other degrees having been settled for any $n$ in Lemma~\ref{lem:bits-and-bobs} (vii) and Section~\ref{sec:special-cases}).
This section describes and analyzes the results.

In \cite{Hou}, Hou showed that there are 31 non-zero classes of polynomials of degree 3 in 8 variables, under the equivalence $\sim_2$. We recall them in the Appendix.

For each function $f$ among
the representatives  $f_2, \ldots, f_{32}$ of these 31 classes, we considered each linear space $V$ of co-dimension up to 3 and we determined whether $V$ is a degree-drop space for $f$ by computing
the degree of the restriction of $f$ to $V$. We counted
the number of degree-drop linear spaces of $f$ of each co-dimension from 1
to 3. Moreover, for each degree-drop subspace of co-dimension
$k\in \{2,3\}$ of  $f$, we determined whether it is a  ``new'' degree-drop space, in the sense that it is not a  subspace of
 a degree-drop subspace of co-dimension $k-1$.
 These 5 values are presented for each function in Table \ref{table2}, in lexicographically decreasing order. 
 We note that there are only three pairs of polynomials that have the same values, namely ($f_{17}$, $f_{28}$), ($f_{19}$, $f_{30}$) and ($f_{23}$, $f_{32}$).
Based on these calculations, we obtain:
\begin{table*}[ht]
  \centering
   \scriptsize \caption{ Number of degree-drop linear spaces of $f$ of each co-dimension from 1
to 3 for the 31 representatives of degree 3 in 8 variables}\label{table2}
 \begin{tabular}{|l|l|l|l|l|l|}
 \hline
 Representative & co-dim 1  & co-dim 2  & co-dim 2 & co-dim 3  & co-dim 3 \\
 &lin spaces&lin spaces& new lin spaces&lin spaces &new lin spaces\\
 \hline
 $f_2$& 7&  875&    0&  17795&  0
\\$f_3$&    1&  187&    60& 6147&   0
\\$f_7$& 1&   127& 0&   3747&    1080
\\ $f_4$&  0 &  49&  49 & 3059 &   168
\\$f_5$&   0&   35&  35&  2371&    256
\\$f_6$&    0&  21&  21& 1683&    360
\\ $f_8$&   0&  13& 13& 1427&   636
\\$f_9$&    0&  7&  7&  995&    568
\\$f_{13}$& 0&   7&   7&   847& 420
\\$f_{16}$& 0&   7&   7&   739& 312
\\$f_{10}$& 0&   3&  3&   867& 678
\\$f_{29}$& 0&   2&   2&   459& 333
\\$f_{11}$& 0&   1&   1&   563& 500
\\$f_{14}$& 0&   1&   1&   459& 396
\\$f_{15}$& 0&   1&   1&   351& 288
\\$f_{24}$& 0&   1&   1&   307& 244
\\$f_{17}$& 0&  1&  1&  243& 180
\\$f_{28}$& 0&   1&   1&   243& 180
\\$f_{26}$& 0&   1&   1&   135& 72
\\$f_{12}$& 0&   0&  0&  651& 651
\\$f_{31}$& 0&   0&   0&   243& 243
\\$f_{18}$& 0&   0&   0&   167& 167
\\$f_{25}$& 0&  0&  0&  155&    155
\\$f_{19}$& 0&   0&   0&   151& 151
\\$f_{30}$& 0&   0&   0&   151& 151
\\$f_{22}$& 0&   0&   0&   105& 105
\\$f_{23}$& 0&   0&   0&   91&  91
\\$f_{32}$& 0&   0&  0&   91&  91
\\$f_{21}$& 0&  0&  0&   75&  75
\\$f_{20}$& 0&   0&   0&   45&  45
\\$f_{27}$& 0&   0&   0&   15&  15
\\
\hline
\end{tabular}
\end{table*}
\begin{corollary}\label{cor:K12-67vars} We have
 \begin{eqnarray*}
 K_{1,3,7}&=&\{ g: g \sim_2 f \mbox{ for some } f\in \{f_4, f_5, f_6\} \cup \{f_8, f_9, \ldots, f_{12}\}\}\\
K_{1,3,8}&=&\{ g: g \sim_2 f \mbox{ for some } f\in \{f_4, f_5, f_6\} \cup \{f_8, f_9, \ldots, f_{32}\}\}\\
K_{2,3,7}&=&\{g: g \sim_2 f_{12}\}\\
K_{2,3,8}&=&\{ g: g \sim_2 f \mbox{ for some } f\in \{f_{12}, f_{18}, f_{19}, f_{20}, f_{21}, f_{22}, f_{23}, f_{25}, f_{27}, f_{30}, f_{31}, f_{32}\}\\
K_{3,3,7}&=&K_{3,3,8} =\emptyset.
 \end{eqnarray*}
 Therefore $\ds(3,7) = \ds(3,8) = 2$; the functions which are optimal from the point of view of their \rds are the ones in $K_{2,3,7}$ and in $K_{2,3,8}$ for 7 and for 8 variables, respectively.
\end{corollary}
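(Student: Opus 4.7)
The plan is to read everything off Table~\ref{table2} together with the reduction results already established in the paper. First, by Lemma~\ref{lem:affine-vs-linear-dd-space}, an affine space is a degree-drop space for $f$ if and only if its underlying linear space is, so it suffices to enumerate linear degree-drop subspaces. By Lemma~\ref{lem:affine-invariance}, the property of belonging to $K_{k,r,n}$ depends only on the $\sim_{r-1}$-class of $f$. Hou's classification of $RM(3,8)/RM(2,8)$ under $\sim_2$ consists of the 31 representatives $f_2,\ldots,f_{32}$ recalled in the Appendix, so for $n=8$ and $r=3$ it is enough to test each of these against the defining condition of $K_{k,3,8}$.

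Next, I would inspect Table~\ref{table2} column by column. The representatives whose entry in the ``co-dim $1$ lin spaces'' column equals $0$ are exactly $\{f_4,f_5,f_6\}\cup\{f_8,f_9,\ldots,f_{32}\}$, which gives the stated description of $K_{1,3,8}$. Those whose ``co-dim $2$ lin spaces'' entry equals $0$ are exactly $\{f_{12},f_{18},f_{19},f_{20},f_{21},f_{22},f_{23},f_{25},f_{27},f_{30},f_{31},f_{32}\}$, giving $K_{2,3,8}$. Every representative has a strictly positive entry in the ``co-dim $3$ lin spaces'' column (the minimum value is $15$, attained by $f_{27}$), so $K_{3,3,8}=\emptyset$.

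For the 7-variable statements, the plan is to invoke Lemma~\ref{belkn-kn+1}: a homogeneous degree-$3$ function $g$ in $7$ variables lies in $K_{k,3,7}$ if and only if, regarded as an $8$-variable function, it lies in $K_{k,3,8}$. Consequently $K_{k,3,7}$ corresponds to those $\sim_2$-classes in $K_{k,3,8}$ which contain a representative depending (after a suitable invertible affine change of variables modulo $RM(2,8)$) on at most $7$ variables, i.e.\ those with $\mathrm{rank}_2(f_i)\le 7$. By direct inspection of Hou's explicit representatives in the Appendix one identifies this subfamily as $\{f_2,f_3,\ldots,f_{12}\}$. Intersecting with $K_{1,3,8}$ and $K_{2,3,8}$ yields exactly the claimed sets $K_{1,3,7}$ and $K_{2,3,7}$, and $K_{3,3,7}\subseteq K_{3,3,8}=\emptyset$.

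Finally, from $K_{3,3,7}=K_{3,3,8}=\emptyset$ and $K_{2,3,7},K_{2,3,8}\neq\emptyset$, it follows immediately that $\ds(3,7)=\ds(3,8)=2$ and that the optimal functions (for the \rds) are precisely the members of $K_{2,3,7}$ and $K_{2,3,8}$ respectively. The genuinely delicate step is identifying which of Hou's 31 representatives have $\mathrm{rank}_2\le 7$ (needed only for the $n=7$ statements); everything else is a mechanical lookup in Table~\ref{table2}, whose entries were themselves obtained by brute-force enumeration of the (finitely many) linear subspaces of $\mathbb F_2^8$ of co-dimension up to $3$.
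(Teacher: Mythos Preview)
Your proposal is correct and follows essentially the same route as the paper: the corollary is read off directly from the computer-generated Table~\ref{table2}, together with Hou's identification of $f_2,\ldots,f_{12}$ as the representatives that live in (at most) $7$ variables. You make the logical dependencies explicit (Lemmas~\ref{lem:affine-vs-linear-dd-space}, \ref{lem:affine-invariance}, \ref{belkn-kn+1}), whereas the paper simply says ``based on these calculations, we obtain'' and leaves those reductions implicit; but the substance is the same.
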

Note that while the probability of a function of degree 3 in 7 variables to have a degree-drop hyperplane is low, namely $0.00011905$ (see Example~\ref{ex1}), the probability of it having a degree-drop space of co-dimension 2 is quite high, 0.605765343 (obtained using the sizes of classes given the Appendix and the $K_{2,3,7}$ given in Corollary~\ref{cor:K12-67vars} above).

 We can also confirm which functions of degree 3 in 8 variables have no degree-drop hyperplanes by using our previous results. 
 The functions $f_2, f_3, f_7$ obviously have degree-drop hyperplanes as they do not satisfy condition~\eqref{c1}. The number of degree-drop hyperplanes is 7,1 and 1 respectively, obtained from Lemma~\ref{lem:bits-and-bobs} (iii) for $f_2$ and from Corollary~\ref{cor:function-with-given-number-of-hyperplanes} and Proposition~\ref{propdsmdegree2} for $f_3$ and $f_7$.  If we add the class sizes (recalled in the Appendix) of the remaining functions in 7 variables, that is, the functions in $K_{1,3,7}$ in Corollary~\ref{cor:K12-67vars},
  we obtain indeed the value of $|K_{1,3,7}|$ computed in 
  in Example~\ref{ex1}.

For functions in 8 variables, out of the functions which appear in Table~\ref{table2} as having no degree-drop hyperplanes, $f_{13}$, $f_{15}$, $f_{16}$, $f_{28}$, $f_{29}$, $f_{30}$ and
$f_{32}$ satisfy Proposition~\ref{prop:card-of-monomial-intersection} for $k=1$, and therefore have no degree-drop hyperplanes. All the remaining functions except
 $f_{22}$  satisfy the conditions of Theorem~\ref{com.scon.ddhyp}. As for  $f_{22}$, using the invertible linear change of
 variable $1\leftarrow  1+3$, $2\leftarrow  2+5+7$ and  $7 \leftarrow 5+7$,  $f_{22}$ becomes a function which
 satisfies  Theorem~\ref{com.scon.ddhyp}. Therefore none of them  have degree-drop hyperplanes.

For degree 5 in 8 variables, the representatives under $\sim_4 $ are the complements $f_{2}^c$,$...,f_{32}^c$ (see Proposition~\ref{prop:Hou-complement}). Since $f_{2}$,$...,f_{12}$ are actually functions in 7 or less variables, when we view them as functions in 8 variables and take the complement we will obtain functions where all the monomials contain the variable $x_8$, and therefore they have a degree-drop hyperplane. 
The other 20 representatives $f_{13}$,$...,f_{32}$ (which are functions in 8 variables but not in less than 8 variables) all satisfy Corollary~\ref{cor:typeB-fast-points} except
$f_{18}$, $f_{21}$, and $f_{22}$. 
 However, these last 3 functions do satisfy Corollary~\ref{cor:typeB-fast-points}  after using the
invertible linear changes of variable $8\leftarrow 1+8$ in $f_{18}$, $3\leftarrow 3+4$
in $f_{21}$ and  $1\leftarrow 1+3$ and $7\leftarrow 5+7$ in $f_{22}$. Therefore all these 20 representatives have no fast points, and by Corollary~\ref{cor:deg-drop-fast-point} their complements have no degree-drop hyperplanes, i.e.\
\[K_{1,5,8}=\{ g: g \sim_4 f \mbox{ for some } f\in \{f_{13}^c, \ldots, f_{32}^c\}\}.\]
From our computer calculations we saw that all the polynomials of degree 5 in 8 variables have degree-drop spaces of co-dimension 2. The number of such degree-drop spaces is given in Table \ref{table3} for those representatives that do not have degree-drop hyperplanes. Therefore $K_{2,5,8}=\emptyset$ and  $\ds(5,8)=1$.
\begin{table}
  \caption{Number of degree-drop spaces of co-dimension 2 for the 20 degree 5 representatives in 8 variables which do not have degree-drop hyperplanes}\label{table3}
  \centering
 \begin{tabular}{|l|l|}
 \hline Representative & codim 2 lin spaces\\
 \hline
$f_{13}^c$& 547\\
$f_{16}^c$& 491\\
$f_{14}^c, f_{29}^c$&   379\\
$f_{15}^c$& 323\\
$f_{17}^c, f_{24}^c, f_{28}^c, f_{31}^c$&   267\\
$f_{18}^c, f_{19}^c, f_{26}^c, f_{30}^c$&   211\\
$f_{22}^c$& 183\\
$f_{21}^c, f_{23}^c, f_{25}^c, f_{27}^c, f_{32}^c$& 155\\
$f_{20}^c$& 127\\
\hline
 \end{tabular}
\end{table}

For degree 4 in 7 variables, the representatives under $\sim_3 $ are the complements $f_{2}^c$,$...,f_{12}^c$. Since $f_{2}$,$...,f_{6}$ are actually functions in 6 or less variables, when we view them as functions in 7 variables and take the complement we will obtain functions where all the monomials contain the variable $x_7$, and therefore they have a degree-drop hyperplane. 
The other 6 representatives $f_{7}$,$...,f_{12}$  all satisfy Corollary~\ref{cor:typeB-fast-points} and therefore   $K_{1,4,7} = \{f_{7}^c$,$...,f_{12}^c\}$ by Corollary~\ref{cor:deg-drop-fast-point}. Adding the sizes of their classes (recalled in the Appendix) we obtain the same value for $|K_{1,4,7}|$ as computed in Example~\ref{ex1}. From our computer calculations we determined that the elements of $K_{1,4,7}$   have, respectively, 315, 147, 91, 91, 35, 91  degree-drop subspaces of co-dimension~2. Therefore $K_{2,4,7}=\emptyset$ and  $\ds(4,7)=1$.

Langevin and Leander
\cite{lang} computed a representative from each of the 998 classes
of functions of degree 4 in 8 variables under the equivalence $\sim_3$.
Again, for each of them we determined by computer calculations the number of linear degree-drop spaces of co-dimension $k=1,2,3$, as well as the number of new degree-drop spaces which are not subspaces of a degree-drop space of lower co-dimension.
The function
$x_1x_2x_3x_4$  has 15 degree-drop hyperplanes, as expected.
The functions $x_1x_2(x_3x_4 + x_5x_6)$ and $x_1x_2(x_3x_4 +
x_5x_6+x_7x_8)$
 each have 3 degree-drop hyperplanes, as expected. A further 8 class representatives have one degree-drop hyperplane.
 They are $x_8f_i$ where $f_i \in \{f_4, f_5, f_6, f_8, \ldots, f_{12}\}$, with $f_i$ being the representatives of function classes of degree 3 in 7 variables under $\sim_2$, as listed in the Appendix.
The remaining representatives of functions have no degree-drop hyperplanes.


The values of $\mathfrak{R}_1(f)$ have been computed in~\cite{lang}
for each class (Note that the values for $\mathfrak{R}_1(f)$ and $\mathfrak{R}_1(f^c)$ are swapped in the original file of~\cite{lang}, see~\cite[Section  6.3.3.1]{Rey18}).
Comparing the values of $\mathfrak{R}_1(f)$ and the
number of degree-drop hyperplanes for each class representative $f$, one can
check that indeed $f$ has $2^{\mathfrak{R}_1(f)}-1$ degree-drop
hyperplanes, as stated in Theorem~\ref{thm:R1-connection}.

 Regarding degree-drop spaces of co-dimension 2, obviously the 11 class representatives that have degree-drop hyperplanes will also have
 degree-drop spaces of co-dimension 2. There were 494 classes having degree-drop spaces of co-dimension 2 but not 1,
 and the remaining 493 classes (that is, about half of all the classes)  did not have any degree-drop spaces of co-dimension 2, so $K_{2,4,8}$ consists of the polynomials in those 493 classes.
 Note that  $\mathfrak{R}_2(f) >0$ for 818 of the polynomials, but only 505 of them have degree-drop spaces of co-dimension 2,
 confirming the fact that the converse of Theorem~\ref{thm:R2} does not hold,
 as illustrated in Example~\ref{counterexample}. All the 998 class representatives have degree-drop spaces of co-dimension 3, i.e.
 $K_{3,4,8} = \emptyset$ and therefore $\ds(4,8)=2$, and the functions in  $K_{2,4,8}$ are optimal from the point of view of their \rds.

Taking into consideration for each class the 3 parameters representing the number of degree-drop spaces of co-dimension 1,2 and 3, there were 137
different triples that appeared.
If we consider instead  the 5 values associated to each
class: the number of degree-drop spaces of co-dimension 1, 2,
3 as well as the number of ``new'' degree-drop spaces of co-dimension
2 and 3,
there are 175 different 5-tuples that appear for the 998 representatives.
This is therefore an invariant worth considering, in conjunction with other invariants, for distinguishing
classes under $\sim_{d-1}$. In \cite{Brier-Langevin}, invariants based on the restrictions of a function to all linear hyperplanes were used for the classification of functions of degree 3 in 9 variables, up to $\sim_2$. The results above show that the restrictions to subspaces of higher co-dimension have good potential too.

\begin{table}
  \centering
  \caption{$\ds(r,n)$ for $n=7,8$}
  \label{table-ds}
\begin{tabular}{c|cccccccc}
\hline
  $n\setminus r$ & 1 & 2 & 3 & 4 & 5 & 6 & 7 & 8 \\
  \hline
  6 & 0 & 2 & 1 & 1 & 0 & 0 & - & - \\
  7 & 0 & 2 & 2 & 1 & 0 & 0 & 0 & - \\
  8 & 0 & 3 & 2 & 2 & 1 & 1 & 0 & 0
\end{tabular}
\end{table}


To conclude this section, we summarise the values of $\ds(r,n)$ for $n=6,7,8$ in Table~\ref{table-ds}. 

\section{Conclusion}
In this paper we commenced the systematic study of functions which maintain their degree when restricted to affine spaces of a given co-dimension $k$. We gave several characterizations of these functions.  The functions that have this property have been fully determined in the class of 
direct sums of monomials and, for $k=1$, the class of symmetric polynomials. An explicit formula is given for the number of functions which maintain their degree when restricted to any hyperplane, but such a formula is still unknown for the case of spaces of co-dimension $k\ge 2$. We explored rich connections  with other existing notions such as fast points and indicator functions. Experimentally, we determined the behaviour of all the functions in 8 variables from the point of view of the stability of their degree under restriction to affine spaces.

However, many aspects remain to be investigated.
We determined for some $r$ and $n$ the value of $\ds(r,n)$, but not in the general case ($\ds(r,n)$ being the maximum $k$ for which there are functions of degree $r$ in $n$ variables which maintain their degree on all spaces of co-dimension $k$).
Constructing families of functions $f$ which achieve this optimum value of degree stability would be of interest for cryptographic applications.  These notions could be further generalized  to vectorial Boolean functions.

\section*{Appendix}
Here we denote the 31 non-zero representatives, of the 31 classes of polynomials of degree 3 in 8 variables in~\cite{Hou},  by $f_2, \ldots, f_{32}$ (with $f_i$ being the same as the function denoted by $F_i$ in~\cite{Hou}) and defined as follows, where 123 means $x_1x_2x_3$. For polynomials in up to 7 variables, we also give in brackets the number of homogeneous polynomials in each equivalence class, as computed  in \cite{lang}.
\begin{eqnarray*}
f_2 & = & 123\, (\mbox{class size } 11811)\\
f_3& =&  123+145\, (\mbox{class size }2314956)\\
f_4& =&  123+456\, (\mbox{class size }45354240)\\
f_5& =&  123+245+346\, (\mbox{class size }59527440)\\
f_6& =&  123+145+246+356+456\, (\mbox{class size }21165312)\\
f_7 & = &127+347+567\, (\mbox{class size }1763776)\\
f_{8}& =& 123+456+147\,(\mbox{class size }2222357760)\\
f_9& = & 123+245+346+147\, (\mbox{class size }238109760)\\
f_{10}& = & 123+456+147+257\, (\mbox{class size } 17778862080)\\
f_{11}& = & 123+145+246+356+456+167\, (\mbox{class size }444471552)\\
f_{12}& = & 123+145+246+356+456+167+247\, (\mbox{class size } 13545799680)\\
f_{13}& = &123+456+178\\ f_{14}& = & 123+456+178+478\\
f_{15}& = &123+245+678+147\\ f_{16}& = &123+245+346+378\\
f_{17}& = &123+145+246+356+456+178\\
f_{18}& = &123+145+246+356+456+167+238\\
f_{19}& = &123+145+246+356+456+158+237+678\\
f_{20}& = &123+145+246+356+456+278+347+168\\f_{21}& = &145+246+356+456+278+347+168+237+147\\ f_{22}& = & 123+234+345+456+567+678+128+238+348+458+568+178\\
f_{23}& = & 123+145+246+356+456+167+578\\
f_{24}& = & 123+145+246+356+456+167+568\\
f_{25}& = & 123+145+246+356+456+167+348\\
f_{26}& = & 123+456+147+257+268+278+348\\
f_{27}& = & 123+456+147+257+168+178+248+358
\\f_{28} & = & 127+347+567+258+368\\
f_{29}& = & 123+456+147+368\\f_{30}& = & 123+456+147+368+578\\
f_{31}& = & 123+456+147+368+478+568\\
f_{32}& = & 123+456+147+168+258+348.
  \end{eqnarray*}

\begin{thebibliography}{9}
\bibitem{Brier-Langevin} E. Brier and P. Langevin. Classification of Boolean cubic forms of nine variables. {\em Proceedings 2003 IEEE Information Theory Workshop (Cat. No.03EX674)}, Paris, France, pp. 179-182, 2003, doi: 10.1109/ITW.2003.1216724.
\bibitem{Carlet}C.~Carlet. Boolean Functions for Cryptography and Error Correcting Codes.
Monography in {\em Cambridge University Press}, 2021.
\bibitem{CF} C. Carlet and K. Feng. An infinite class of balanced functions with optimum
algebraic immunity, good immunity to fast algebraic attacks and good
nonlinearity. Proceedings of ASIACRYPT 2008, Lecture Notes in
Computer Science 5350, pp. 425-440 (2008).
\bibitem{CS} C. Carlet and S. Feukoua. Three basic questions on
Boolean functions. {\em Advances in Mathematics of Communications},
Vol 4, No 11, pp. 837-855, 2017.
\bibitem{CS2} C. Carlet and S. Feukoua, Three parameters of Boolean functions related to their constancy on
affine spaces. {\em Advances in Mathematics of Communications},
Vol 14, No. 4,
pp. 651-676, 2020.
\bibitem{CME} C. Carlet and P. M\'eaux. A Complete Study of Two Classes of Boolean
Functions: Direct Sums of Monomials and
Threshold Functions. {\em IEEE Transaction  on Information  Theory}, Vol. 68, No 5, 2022.
\bibitem{CM} C. Carlet and S. Mesnager. Four decades of research on bent functions. Special Jubilee Issue of {\em Designs, Codes and Cryptography},
Vol. 78, pp. 5-50, 2016.
\bibitem{carlitz} L. Carlitz. Some inverse relations. {\em Duke
Math.  Journal}, Vol 40, No 4, pp. 893-901, 1973.
\bibitem{Courtois1} N. Courtois. Fast Algebraic Attacks on Stream Ciphers with Linear
Feedback. {\em Proceedings of CRYPTo 2003, Lecture Notes in Computer Science
2729}, pp. 177-194, 2003.
\bibitem{Courtois} N. Courtois and W. Meier. Algebraic Attacks on Stream Ciphers with Linear
Feedback. {\em Proceedings of EUROCRYPT 2003, Lecture Notes in Computer Science
2656}, pp. 346-359, 2003.
\bibitem{dalai} D. K. Dalai, S. Maitra and S. Sarkar. Basic Theory in
Construction of Boolean Functions with Maximum possible Annihilator
Immunity. {\em Designs, Codes and Cryptography} 40(1), pp. 41-58,
2006.
\bibitem{Duan} M. Duan, X. Lai, M. Yang, X. Sun, B.
Zhu. Distinguishing properties and applications of higher order
derivatives of boolean functions. Information Sciences, 271(2014),
224--235.
\bibitem{Hou} X. Hou, $GL(m,2)$ Acting on $R(r,m)/R(r-1,m)$,
{\em Discrete Mathematics} 149, pp. 99--122, 1996.
\bibitem{Lai94}
X. Lai.
\newblock Higher order derivatives and differential cryptanalysis.
\newblock In R.~E. Blahut, D.~J. {Costello, Jr.}, U. Maurer, and
  T. Mittelholzer, editors, {\em Communications and Cryptography}, volume
  276 of {\em The Springer International Series in Engineering and Computer
  Science}, pages 227--233. Springer, 1994.
\bibitem{lang1} P. Langevin. Covering Radius of $RM(1,9)$ in $RM(3,9)$. {\em Eurocode'90, Lecture Notes in computer Science}, Vol
514, pp. 51-59, 1991.
http://langevin.univ-tln.fr/project/quartics/quartics.html, 2007.
\bibitem{lang} P. Langevin and G. Leander. Classification of the quartic forms of eight
variables. {\em In Boolean Functions in Cryptology and Information
Security, Svenigorod}, Russia,
http://langevin.univ-tln.fr/project/quartics/quartics.html, 2007.
\bibitem{FJ}F. J. MacWilliams and N. J. Sloane. The theory of error-correcting
codes, Amsterdam, North Holland. 1977.
\bibitem{Matsui} M. Matsui.  Linear cryptanalysis method for DES cipher. {\em Proceedings of EUROCRYPT
1993, Lecture Notes in Computer Science 765}, pp. 386-397, 1994.
\bibitem{Rey18} P. Reyes-Paredes. Construction of Cryptographic Boolean Functions to Prevent the Cube Attack. MSc Project, Loughborough University UK, https://doi.org/10.6084/m9.figshare.24347098.v1, 2018.
\bibitem{Rothaus}  O. S. Rothaus. On ``bent" functions. {\em J. Comb.
Theory}, 20A, pp. 300-305, 1976.
\bibitem{ruepel} R. A. Rueppel. Analysis and design of stream ciphers. {\em Com. and Contr. Eng.
Series}, 1986.
\bibitem{SalMan17}
A. S{\u{a}}l{\u{a}}gean and M. Mandache-S{\u{a}}l{\u{a}}gean.
\newblock Counting and characterising functions with ``fast points'' for
  differential attacks.
\newblock {\em Cryptography and Communications}, \textbf{9}, pp. 217-239, 2017.
\bibitem{SalOzb20}
A. S{\u{a}}l{\u{a}}gean and F. \"{O}zbudak.
\newblock Counting Boolean functions with faster points.
\newblock {\em Designs, Codes and Cryptography}, \textbf{88}, pp. 1867-1883, 2020.
\end{thebibliography}
\end{document}